\numberwithin{equation}{section}
\newtheorem{theorem}{Theorem}[section]	
\newtheorem*{theorem*}{theorem}
\newtheorem{proposition}[theorem]{Proposition} 
\newtheorem{lemma}[theorem]{Lemma}
\newtheorem{definition}[theorem]{Definition}
\newtheorem*{def*}{Definition}
\newtheorem{cor}[theorem]{Corollary}
\newtheorem{remark}[theorem]{Remark}
\newtheorem{example}[theorem]{Example}
\tikzset{
	schraffiert/.style={pattern=horizontal lines,pattern color=#1},
	schraffiert/.default=black
}
\tikzset{
	ultra thin/.style= {line width=0.1pt},
	very thin/.style=  {line width=0.2pt},
	thin/.style=       {line width=0.4pt},
	semithick/.style=  {line width=0.6pt},
	thick/.style=      {line width=0.8pt},
	very thick/.style= {line width=1.2pt},
	ultra thick/.style={line width=2.4pt}
}
\newcommand{\C}{{\mathbb C}}
\newcommand{\N}{{\mathbb N}}
\newcommand{\R}{{\mathbb R}}
\newcommand{\Z}{{\mathbb Z}}
\newcommand{\Aa}{{\mathcal A}}
\newcommand{\Ee}{{\mathcal E}}
\newcommand{\Dd}{{\mathcal D}}
\newcommand{\Jj}{{\mathcal J}}
\newcommand{\Kk}{{\mathcal K}}
\newcommand{\Ss}{{\mathcal S}}
\newcommand{\Vv}{{\mathcal V}}
\newcommand{\Orb}{{\mathrm Orb}}				
\DeclareMathOperator{\dist}{{\mathrm dist}}     
\newcommand{\dc}{{\mathrm d_c}}					
\newcommand{\PatstarZd}{\mathrm{Pat}(\Aa^{\Z^d})}			
\newcommand{\Patstar}{\mathrm{Pat}(\Aa^\Gamma)}	
\newcommand{\Stab}{\mathrm{Stab}}				
\newcommand{\CLR}{C_{\mathrm{LR}}}				
\newcommand{\HeiR}{H_3(\mathbb{R})}		
\newcommand{\HeiZ}{H_3(2\mathbb{Z})}		
\newcommand{\mv}{\vec{m}}
\newcommand{\Kmv}{K_{\vec{m}}}
\newcommand{\lodim}{\underline{\dim}_B}
\newcommand{\Gtable}{G_{\textit{table}}}
\definecolor{grau}{gray}{0.9} 
\definecolor{gelb}{RGB}{225,225,0} 
\definecolor{blau}{RGB}{0,0,255} 
\definecolor{rot}{RGB}{255,0,0} 
\title{Spectral approximation for substitution systems}
\author{Ram Band, Siegfried Beckus, Felix Pogorzelski and Lior Tenenbaum}
\address{Department of Mathematics\\
Technion - Israel Institute of Technology\\
Haifa, Israel\\
and\\
Institute of Mathematics\\
University of Potsdam\\
Potsdam, Germany}
\email{ramband@technion.ac.il}
\address{Institute of Mathematics\\
University of Potsdam\\
Potsdam, Germany}
\email{beckus@uni-potsdam.de}
\address{Department of Mathematics and Computer Science\\
University of Leipzig, Leipzig, Germany}
\email{felix.pogorzelski@math.uni-leipzig.de}
\address{Department of Mathematics\\
 Technion - Israel Institute of Technology\\
 Haifa, Israel}
\email{tenen25@campus.technion.ac.il}
\begin{document}
	\begin{abstract}
		We study periodic approximations of aperiodic Schrödinger operators on lattices in Lie groups with dilation structure. The potentials arise through symbolic substitution systems that have been recently introduced in this setting. 
		We characterize convergence of spectra of associated Schrödinger operators in the Hausdorff distance via properties of finite graphs. 
		As a consequence, new examples of periodic approximations are obtained. 
		We further prove that there are substitution systems that do not admit periodic approximations in higher dimensions, in contrast to the one-dimensional case. 
		On the other hand, if the spectra converge, then we show that the rate of convergence is necessarily exponentially fast. 
		These results are new even for substitutions over $\mathbb{Z}^d$. 
	\end{abstract}
	
	\maketitle

\tableofcontents
	
\section{Introduction and guiding example}

For a finite set $\mathcal{A} \subseteq \mathbb{R}$ and a function $V:\Z^d\to\mathcal{A}$, consider the self-adjoint and bounded operator $H_V:\ell^2(\Z^d)\to\ell^2(\Z^d)$ defined by
\begin{equation}\label{eq:Hamiltonian_Zd}
(H_{V}\psi)(\gamma) := \sum_{\eta\in\Z^d:\, |\eta-\gamma|= 1} \psi(\eta) + V(\gamma) \psi(\gamma), \qquad \psi\in\ell^2(\Z^d), \gamma\in \Z^d.
\end{equation}

Though determining the spectrum $\sigma(H_V)$ and its properties is very hard in general, the precise structure of $V$ may  allow one to get some insights into the spectral theory of $H_V$. For instance, such potentials can be found in the theory of aperiodic order. For background information on aperiodic order and its interrelations with spectral theory, we refer to the monographs \cite{Queff10,BaakeGrimm13}. 
In the one-dimensional case, there are various powerful methods to explore the spectral properties of such operators, depending on the nature of the potential, see e.g.\@ \cite{Teschl_Jacobi,DaEmGo15-survey,Dam17-Survey,Jitomirskaya2019,DF22,DaFi24-book_2}. 
Generally, less techniques are available in higher dimensions. There are some explicit results
for the combinatorial Laplacian or the adjacency operator, see for instance the survey \cite{MohWoe89}, \cite{BVZ97, KS99} for  Heisenberg Cayley graphs, \cite{GZ01,BW05, GLN16,GS21} for the Lamplighter group and related graphs, or \cite{GLN18, GNP21} for certain Schreier graphs. In addition, interesting developments were achieved for trees \cite{KeLeWa12,KeLeWa13,KeLeWa15,AvBreuSim_PerTree20}.
However, the picture becomes much less clear for aperiodic operators in higher dimensions. An approach in this direction is to deal with dynamically-defined potentials \cite{Dam17-Survey,BBdN18} and to find suitable periodic approximations. 
The latter are of particular interest since they can be studied via Floquet-Bloch theory \cite{ReedSimonIV,Kuch16}. Here, $V$ is called periodic if there are only finitely many translations of $V$ by elements in $\Z^d$. For instance, they can be used to numerically compute the spectrum \cite{BBdN18}, detect spectral gaps \cite{ChElYu90,HeMoTe24}, prove convergence of the Lebesgue measures \cite{AvMoSi91,Last94}, estimate fractal dimensions of the spectrum by finding suitable covers \cite{LiuWen04,DamTch07,DamEmbGorTch08,DaGoYe16} or to study whether all possible spectral gaps are there \cite{Raym95,DaGoYe16,BBL24,DaEmFi24,BaBeBiTh25}.

\medskip

The theme of this work is to address the issue whether the spectra as sets converge in the Hausdorff metric, cf.\@ \cite{Ell82,BIT91,BBdN18}. If so, one might also attempt to  estimate the rate of convergence, see e.g. \cite{ChElYu90,AvMoSi91,Bel94,CoPu12,BBC19,BecTak21}. We answer these questions affirmatively in this work for specific potentials defined by substitutions.
The latter are local inflation rules \cite{Queff10,BaakeGrimm13} describing the potential $V$ and are of particular interest in the areas of aperiodic order and symbolic dynamics. 
We adopt the framework from \cite{BHP21-Symbolic}, where substitution systems are defined via a lattice $\Gamma$ in a Lie group with dilation structure, generalizing the classical set-ups over $\Z$ and $\Z^d$. For a given potential $V$, we construct a sequence $(V_n)$ of potentials by iterating the substitution rule on an initial configuration. The main results of the present paper can be summarized as follows.

\begin{itemize}
\item We characterize the convergence of  $\sigma(H_{V_n})$ towards $\sigma(H_V)$ with respect to the Hausdorff metric, see Theorem~\ref{thm:charact_convergence}. In particular, we give a verifiable criterion in terms of finite graphs associated with the substitution. The construction of these graphs  depends on a geometric notion that will be called {\em testing tuples}, cf.\@ Definition~\ref{def:TestingTuple}. 

\item We show that if $\sigma(H_{V_n})$ converges to $\sigma(H_V)$, then the Hausdorff distance of the spectra necessarily decays exponentially fast, see Corollary~\ref{cor:SpectralEstimates} as a consequence of Theorem~\ref{thm:upper-bound-estim}.

\item We investigate the conditions for convergence and compute testing tuples 
in concrete examples, see Section~\ref{subsec:Block_Substitutions} for block subsitutions over $\Gamma = \Z^d$ and Section~\ref{subsec:Heisenberg_Example} for the discrete Heisenberg group $\Gamma = H_3(2\Z)$. For the Heisenberg group, we also give explicit examples of aperiodic configurations admitting periodic approximations.

\item We show that for $\Gamma = \Z^2$ not all periodic initial configurations are suitable for approximation, see Corollary~\ref{cor:TableTiling_Letter_BadAppr}, and that periodic approximations might not exist at all for certain substitutions, see Corollary~\ref{cor:PrimSubst_NotPerAppr}.  
\end{itemize} 

If $\Gamma=\Z$, the methods of this paper can also be applied to substitution systems which are not of constant length. This is worked out in \cite{Ten24}. For Sturmian Hamiltonians, explicit spectral estimates have been recently proven in \cite{BBT24}. For rotation numbers with eventually periodic continued fraction expansion, the underlying dynamical system comes from a substitution. In these cases, \cite{BBT24} provides the analogous results for Corollary~\ref{cor:SpectralEstimates}.

\medskip

The paper is organized as follows. We introduce some preliminaries in Section~\ref{sec:system} and illustrate our results for the guiding example of the table tiling substitution in Section~\ref{subsec:tableTiling}. Section~\ref{sec:BeyondAbelian} is devoted to the presentation of the main results of our work. The framework for symbolic substitution systems, along with some results needed for our purposes are introduced in Section~\ref{subsec:Substitutions_general}. We define the concepts of substitution graphs  and testing domain in Section~\ref{subsec:TestingDomains}. These objects play a fundamental role in our first main theorem, Theorem~\ref{thm:charact_convergence} on the characterization of convergence, stated in Section~\ref{subsec:DynSyst_SpectralConv}. As a consequence, one also obtains existence of aperiodic substitution systems that are not periodically approximable, cf.\@ Corollary~\ref{cor:PrimSubst_NotPerAppr}.
 The second main theorem, Theorem~\ref{thm:upper-bound-estim}, concerning convergence with exponential speed, is formulated in Section~\ref{subsec:MainResults}. 
We derive in Corollary~\ref{cor:SpectralEstimates} exponential convergence of the spectra with respect to the Hausdorff distance. Section~\ref{sec:Block_and_Heisenberg} is devoted to the construction of testing tuples for explicit examples. This concerns block substitutions for $\Gamma = \Z^d$ (see Proposition~\ref{prop:test-euclid} in  Section~\ref{subsec:Block_Substitutions})  and substitution systems over the discrete Heisenberg group $\Gamma= H_3(2\Z)$ (see Proposition~\ref{prop:Periodic_Heisenberg} in Section~\ref{subsec:Heisenberg_Example}). In Section~\ref{sec:Cond-sect} we give the proof of Theorem~\ref{thm:charact_convergence}, our first main theorem. The following Section~\ref{sec:Rate_Converg} is concerned with the proof of Theorem~\ref{thm:upper-bound-estim}, our second main theorem. In Section~\ref{subsec:LowerBounds} we additionally obtain a lower bound on the convergence rate, see Proposition~\ref{prop:lower-bound-estim}. In the final Section~\ref{sec:Testing_Tuple_reduction}, we provide an algorithm, along with its mathematical foundation, to reduce the size of giving testing domains. Using computer assistance, we obtain a rather small testing domain for the Heisenberg group, see Proposition~\ref{prop:Heisen-test-dom}.

\subsection*{Acknowledgements}
L.T.\@ wishes to thank Alan Lew and Philipp Bartmann for insightful discussions. 
We wish to thank Daniel Lenz for pointing out his work on densely repetitive Delone sets \cite{Lenz04}, which was helpful for deriving the bounds in Section~\ref{subsec:LowerBounds}. 
The authors are grateful to Pascal Vanier for discussions on the results in \cite{DuLeSh05,Oll08,JeVa20,Ballier23}. 
We also wish to thank Yannik Thomas for useful comments on an earlier version. 
This work was partially supported by the Deutsche Forschungsgemeinschaft [BE 6789/1-1 to S.B.] and [PO 2383/2-1 to F.P.]. R.B.\@ and L.T.\@ were supported by the Israel Science Foundation (ISF Grant No. 844/19). We are grateful for the hospitality and the excellent working conditions provided by the University of Leipzig, University of Potsdam and the Technion during mutual visits.

\subsection{The underlying dynamical system} \label{sec:system}
Let $\Gamma$ be  a countable discrete group. Important examples are $\Gamma=\Z^d$ or the discrete Heisenberg group $\Gamma= \HeiZ$. Suppose that the potential $V:\Gamma\to\R$ takes only finitely many values. Then the finite set $\Aa:= \{V(\gamma)\,|\, \gamma\in\Gamma\}$ is called the \emph{alphabet}. The potential $V$ can be seen as an element of the product space $\Aa^\Gamma= \big\{ \omega:\Gamma\to \Aa \big\}$, which we call the {\em configuration space}. A specific metric on $\Aa^\Gamma$ is defined as follows: suppose that there is a left-invariant metric $d_\Gamma$ on $\Gamma$, i.e. \@ $d_\Gamma(\gamma\eta_1,\gamma\eta_2)=d_\Gamma(\eta_1,\eta_2)$ for all $\gamma,\eta_1,\eta_2\in\Gamma$. For instance, $\Gamma=\mathbb{Z}^d$ can be equipped with the Euclidean metric, or the Heisenberg group $\Gamma=H_3(\mathbb{Z})$ can be equipped with the metric inherited from the Cygan-Kor{\'a}nyi norm, see equation \eqref{eq:Heisenberg_CyganKoranyiNorm} below.  Then
\begin{equation} \label{eq:config-metric}
\dc(\omega,\rho):= 
	\inf \Big\{ \frac{1}{r+1}  \Big\vert\; r\geq 0 \; \textrm{such that} \; \omega\vert_{B(e,r)}= \rho \vert_{B(e,r)} \Big \},
	\qquad \omega,\rho\in\Aa^\Gamma,
\end{equation}
defines an ultra metric on the configuration space $\Aa^\Gamma$. Here $B(e,r):=\{\gamma\in\Gamma\,|\, d_\Gamma(\gamma,e)<r\}$ denotes the open ball of radius $r$ in $\Gamma$ around the neutral element $e\in\Gamma$.
The group $\Gamma$ acts continuously on $\Aa^\Gamma$ via left translations
\[
	(\gamma \omega)(\eta):= \omega (\gamma^{-1}\eta) \quad \text{for all} \quad \gamma,\eta \in \Gamma.	
\]
Thus, $(\Aa^\Gamma,\Gamma)$ defines, via the previously defined action, a dynamical system. 
A nonempty set $\Omega\subseteq \Aa^\Gamma$ is called \emph{invariant} if $\gamma \Omega \subseteq \Omega$ for all $\gamma \in \Gamma$. Then the \emph{space of subshifts} is defined by
	\[ 
	\Jj := 
		\Big\{  \Omega\subseteq \Aa^\Gamma \vert\; \Omega \textrm{ is invariant, closed, nonempty} \Big \}.
	\]
Particular elements of interest in $\Jj$ are the orbit closures $\overline{\Orb(\omega)}$ for $\omega\in\Aa^\Gamma$ where $\Orb(\omega):=\{\gamma\omega\,|\, \gamma\in\Gamma\}$. We are mainly interested in the subshift generated by the potential $V$, namely $\overline{\Orb(V)}\in\Jj$. The set $\Jj$ is naturally equipped with a metric -- the Hausdorff metric inherited from $\dc$ -- that encodes spectral properties \cite{BBdN18,BBC19,BP18,BecTak21}.

\medskip

We first recall the definition of the Hausdorff distance. Let $(X,d)$ be a metric space and $\Kk(X)$ be the set of all nonempty compact subsets of $X$. Then, the \emph{Hausdorff metric} between two  $A,B\in \Kk(X)$ is defined by
\begin{equation*}
	d_H(A,B)= \max \Big\{ \underset{a\in A}{\sup}\; \dist(a,B), \underset{b\in B}{\sup}\; \dist(b,A) \Big\},
\end{equation*}
where $\dist(a,B) := \inf_{b^{\prime} \in B} d(a,b^{\prime})$. By definition, the Hausdorff metric is induced from the underlying metric $d$ on $X$. We are mainly interested in the case
\begin{itemize}
\item $X=\R$ with Hausdorff metric $d_H$ on $\Kk(\R)$ induced by the Euclidean distance $|\cdot|$; and
\item $X=\Aa^\Gamma$ with Hausdorff metric $\delta_H$ induced by the metric $\dc$ and its restriction to $\Jj\subseteq \Kk(\Aa^\Gamma)$.
\end{itemize}
Note that $(\Jj,\delta_H)$ is itself a compact metric space since $\Jj\subseteq \Kk(\Aa^\Gamma)$ is closed and $\Kk(\Aa^\Gamma)$ is compact\color{black}, see \cite[Proposition~4]{BBdN18}. The Hausdorff metric $\delta_H$ on $\Jj$ can be expressed by local patches of configurations (see equation~\eqref{eq:HausdorffMetric_Dictionaries} below), which we briefly introduce.

For $M\subseteq \Gamma$, a map $P:M\to\Aa$ is called a {\em patch} with {\em support} $M$. If $M$ is finite, we say $P$ is a finite patch. The \emph{set of all patches} is denoted by $\Patstar$. Note that $\Patstar$ includes the empty patch $\text{\o}\in\Aa^\emptyset$ and infinite patches such as elements of $\Aa^\Gamma$. 
A patch $P\in\Aa^M$ is a {\em subpatch} of $Q\in\Aa^K$ ($P\prec Q$) if there is a $\gamma\in\Gamma$ such that $\gamma M\subseteq K$ and $Q(\gamma \eta) = P(\eta)$ for all $\eta\in M$. 
By convention, the empty patch $\text{\o}\in\Aa^\emptyset$ is a subpatch of any configuration $\omega\in\Aa^\Gamma$. 
Clearly, the relation $\prec$ defined on patches is  a partial order. 
For $\omega\in\Aa^\Gamma$, the associated dictionary $W(\omega)$ is the set of all finite subpatches of $\omega\in\Aa^\Gamma$. 
Similarly, one defines $W(\Omega):=\bigcup_{\omega\in\Omega} W(\omega)$ for a subshift $\Omega\in\Jj$. For a finite set $M\subseteq \Gamma$, define $W(\omega)_M:=W(\omega)\cap\Aa^M$ for $\omega\in\Aa^\Gamma$ and $W(\Omega)_M:=W(\Omega)\cap\Aa^M$ for $\Omega\in\Jj$. With this at hand, we get
\begin{equation}
\label{eq:HausdorffMetric_Dictionaries}
\delta_H(\Omega_1,\Omega_2)
	= \inf\left\{\left. \frac{1}{r+1} \,\right|\, r\geq 0 \; \textrm{such that} \;   W(\Omega_1)_{B(e,r)}=W(\Omega_2)_{B(e,r)} \right\}, 
	\qquad \Omega_1,\Omega_2\in\Jj,
\end{equation}
from our choice of the $\dc$ on $\Aa^\Gamma$ and the definition of the Hausdorff metric. Note that by convention $W(\Omega)_{B(e,0)}=\{\text{\o}\}$.

A {\em Schrödinger operator $H$ with finite range} is a family of self-adjoint operators $H_\omega:\ell^2(\Gamma)\to\ell^2(\Gamma)$ for $\omega\in\Aa^\Gamma$ defined by 
\begin{equation}\label{eq:Hamiltonian}
(H_\omega\psi)(\gamma) := \sum_{\eta\in B} t_\eta\big(\gamma^{-1}\omega\big) \psi(\gamma\eta), 
	\qquad \psi\in\ell^2(\Gamma), \gamma\in \Gamma,
\end{equation}
where $B\subseteq \Gamma$ is finite with $B=B^{-1}$ while $t_\eta:\Aa^\Gamma\to\C, \eta\in B,$ are continuous and satisfy that $t_{\eta}(\omega)=\overline{t_{\eta^{-1}}(\eta \omega)}$, where $\overline{z}$ is the complex conjugate of $z$. The latter conditions guarantee that $H_\omega$ is self-adjoint for each $\omega\in\Aa^\Gamma$. 
Thus, the spectrum $\sigma(H_\omega)$ of $H_\omega$ is a compact subset of $\R$.
We say $H$ is {\em strongly pattern equivariant} if the coefficients $t_\eta:\Aa^\Gamma\to\C, \eta\in B,$ are continuous and take finitely many values.
These conditions are equivalent to $t_\eta$ being locally constant. Thus, the value $t_\eta(\omega)$ depends only on the patch $\omega|_{B(e,r)}$ for suitable $r>0$.
The operator defined in \eqref{eq:Hamiltonian_Zd} is a strongly pattern equivariant Schrödinger operator with finite range using the viewpoint that $V\in\Aa^\Gamma$.

Recently, it was shown that the convergence of dynamical systems in $(\Jj,\delta_H)$ is tightly connected to the convergence of the spectrum of such operators in $(\Kk(\R),d_H)$ \cite{BBdN18,BBC19,BecTak21} and other spectral quantities \cite{BP18} if $\Gamma$ is amenable. 
This is the starting point of this work. 

Let $\Omega\in\Jj$. To study the spectral properties of Schrödinger operators associated with $\omega\in\Omega$, one aims to find periodic approximations. 
Here, a configuration $\omega_0\in\Aa^{\Gamma}$ is {\em periodic}, if its orbit $\Orb(\omega_0)$ is finite. Then $\Omega$ is {\em periodically approximable} if there is a sequence of periodic $\omega_n\in\Aa^{\Gamma}$ such that $\lim_{n\to\infty} \delta_H\big(\Orb(\omega_n),\Omega\big)=0$. In combination with the results obtained in \cite{BBdN18,BBC19,BecTak21}, we obtain periodic approximations of the associated Schrödinger operators and their spectra. Thus, the question arises when a subshift is periodically approximable.

Our main results hold for a quite general setup. Concretely, we consider subshifts defined via substitutions on lattices in homogeneous Lie groups that were recently introduced \cite{BHP21-Symbolic}. Since the formal statements need some more introduction, we start presenting the main results along a guiding example, namely the table tiling substitution. Later we also provide an example on a non-abelian group $\Gamma$ -- the discrete Heisenberg group, see Section~\ref{subsec:Heisenberg_Example}.

\subsection{A guiding example: The table tiling substitution}
\label{subsec:tableTiling}

Let $\Aa=\{$
\hspace*{-0.2cm}
\begin{tikzpicture}
	\filldraw [rot] (0,0.5) circle (2.5pt);
\end{tikzpicture}
\hspace*{-0.2cm} , \hspace*{-0.2cm}
\begin{tikzpicture}
	\filldraw [gelb] (0.35,0.5) circle (2.5pt);
\end{tikzpicture}
\hspace*{-0.1cm}, \hspace*{-0.2cm}
\begin{tikzpicture}
	\filldraw [blau] (0.7,0.5) circle (2.5pt);
\end{tikzpicture}
\hspace*{-0.1cm}, \hspace*{-0.2cm}
\begin{tikzpicture}
	\draw [thin, fill=grau]  (1.05,0.5) circle (2.5pt);
\end{tikzpicture} \hspace*{-0.2cm}
$\}$, 
$\Gamma=\Z^2$ and $K=\{-1,0\}^2$. The {\em table tiling substitution} is defined by a substitution rule $S_0:\Aa\to\Aa^{K}$ where
\vspace*{-1cm}
\begin{figure}[H] 
	\begin{center}		\includegraphics[scale=3]{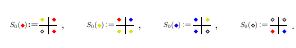}
	\end{center}
\end{figure}
\vspace*{-1cm}

It is standard to extend a substitution rule $S_0$ to a substitution map $S:\mathrm{Pat}(\Aa^{\Z^2})\to\mathrm{Pat}(\Aa^{\Z^2})$ by acting letter wise, see e.g. \cite[Chapter 5.1]{Queff10} and \cite[Chapter 4.9]{BaakeGrimm13}. Note that the restriction $S:\Aa^{\Z^2}\to\Aa^{\Z^2}$ is continuous. For convenience of the reader, the concrete mathematical statement is provided in a more general setting in Proposition~\ref{prop:SubstitutionMap} originating from \cite[Proposition~2.7]{BHP21-Symbolic}. For now, the idea of extending the substitution rule is sketched in Figure~\ref{fig:TableTiling_Iterations}.

\begin{figure}[htb] 
\begin{center}
\includegraphics[scale=2.2]{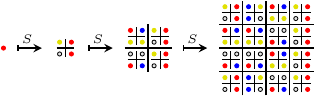}
\caption{The first three iterations of the table tiling substitution starting from the letter $\textcolor{red}{\Huge\bullet}$.}
\label{fig:TableTiling_Iterations}
\end{center}
\end{figure}

A patch $P\in\Aa^M$ is called {\em $S$-legal} (with respect to the table tiling substitution) if there exists a letter $a\in\Aa$ and an integer $n\in\N$ such that $P\prec S^n(a)$. The set of all $S$-legal patches is denoted by $W(S)$, which defines a dictionary. For instance, the $S$-legal patches with support $T=\{0,1\}^2$ are given in Figure~\ref{fig:TableTiling_T-legal_Patch}, see e.g. \cite[Remark 4.17]{BaakeGrimm13}.

\begin{figure}[hb] 
\begin{center}
\includegraphics[scale=3.3]{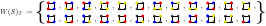}
\caption{The $S$-legal patches for the table tiling substitution with support $T=\{0,1\}^2$.}
\label{fig:TableTiling_T-legal_Patch}
\end{center}
\end{figure}

The associated subshift of the table tiling substitution is given by 
$$
\Omega(S):=\big\{\omega\in\Aa^{\Z^2} \,|\, W(\omega)\subseteq W(S)\big\} \in\Jj.
$$

The typical approach to construct periodic approximations for $\Omega(S)$ is to take a configuration $\omega_a$, that is constantly equal to a fixed letter in $a\in\Aa$ and apply the substitution $S^n(\omega_a)$. 
While this approach works in the typical cases (but not all see \cite[Section 2]{Ten24}) if $\Gamma=\Z$, it fails for any letter in our example, see Corollary~\ref{cor:TableTiling_Letter_BadAppr} below. 

In order to resolve this issue, we show that the convergence can be checked by studying a finite graph associated with the substitution.
In order to do so, let us shortly recall some basic graph theoretic notions.
We call $G=(\Vv,\Ee)$ a {\em directed graph} with {\em vertex set} $\Vv$ and {\em edge set} $\Ee\subseteq \Vv\times\Vv$, if $\Vv$ is a finite (nonempty) set. A tuple $(v,w)\in\Ee$ is a directed edge from $v$ to $w$. A \emph{(directed) path} of length $\ell\in\N$ in $G$ is a finite tuple $(v_0,v_1,...,v_\ell)\in \Vv^{\ell+1}$, such that $(v_j,v_{j+1})\in\Ee$ for every $0\leq j \leq \ell-1$. A path $(v_0,...,v_\ell)$ in $G$ is called a \emph{closed path} if $v_0=v_\ell$. 
A path $(v_0,...,v_\ell)$ in $G$ is called a \emph{subpath} of $(u_0,...,u_{\tilde{\ell}})$, if $\ell\leq \tilde{\ell}$ and there exists an $0\leq i \leq \tilde{\ell}$ such that $v_j=u_{i+j}$ for every $0\leq j\leq \ell$.

For $T:=\{0,1\}^2$, we define the {\em graph $\Gtable$ associated with the table tiling substitution} by the directed graph with vertex set $\Vv=\Aa^T$ and the edge set $\Ee$ defined by
$$
(P,Q)\in \Ee \quad :\Longleftrightarrow\quad Q\prec S(P) \textrm{ and } P,Q \not\in W(S).
$$ 
We note that while the graph is finite its vertex set has $|\Aa^T|=256$ vertices where $|\Aa^T|$ denotes the cardinality of the set $\Aa^T$.
A sketch of a subgraph of $\Gtable$ is provided in Figure~\ref{fig:TableTiling_Graph}.

\begin{figure}[hbt] 
\begin{center}
\includegraphics[scale=0.96]{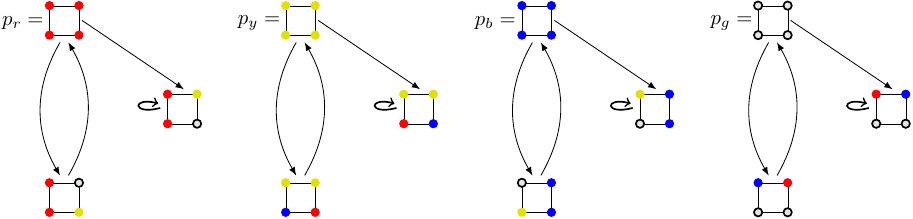}
\caption{ Subgraphs of the graph $\Gtable$ are plotted.}
\label{fig:TableTiling_Graph}
\end{center}
\end{figure}

It is worth emphasizing that we only have edges between patches that are not $S$-legal (illegal). Therefore this graph tracks if an illegal patch of size $T$ persists under the iteration of the substitution or not. A common strategy to periodically approximate a substitution subshift $\Omega(S)$ is by choosing a finite $S$-legal patch and extend it via periodic boundary conditions to obtain $\omega_0\in\Aa^{\Z^2}$. When gluing the patches periodically together, we may create illegal patches at the boundary in general. In order to deal with this, we need to check if such errors (illegal patches) eventually disappear when iterating the substitution.

Since we only have finitely many patches of size $T$, one gets a cycle in $\Gtable$, whenever an illegal patch persists under the iteration of the substitution. 
On the other hand, if the paths starting in patches of $\omega_0\in\Aa^{\Z^2}$ are all finite, then $S^n(\omega_0)$ contains only legal patches of size $T$ for $n$ large enough. 
Since $T=\{0,1\}^2$ is chosen to be large enough as well (formally introduced as a testing domain below), we show that for each radius $R>0$, the patches with support $B(e,R)$ in $S^n(\omega_0)$ are $S$-legal for $n$ large enough. Since the table tiling substitution is also primitive, all patches eventually appear. Thus, the orbit closure of $S^n(\omega_0)$ is converging to $\Omega(S)$ using the description of the Hausdorff metric in equation~\eqref{eq:HausdorffMetric_Dictionaries}.
Conversely, if a patch of size $T$ is contained in a cycle of $\Gtable$, then all iterations $S^n(\omega_0)$ would contain at least one illegal patch of size $T$. Thus, the subshifts do not converge using equation~\eqref{eq:HausdorffMetric_Dictionaries}.
The connection to the spectral convergence and the convergence of the subshifts follows then from \cite{BBdN18}.

With this at hand, we obtain the following special case of our main result (Theorem~\ref{thm:charact_convergence}).

\begin{proposition} \label{prop:TableTiling_charact_convergence}
The following assertions are equivalent for the table tiling substitution and $\omega_0\in\Aa^{\Z^2}$ and $T:=\{0,1\}^2$.
\begin{enumerate}[(i)]
	\item For all Schrödinger operators $H$ with finite range, we have
$$
\lim\limits_{n\to\infty}\sigma(H_{S^n(\omega_0)})=\sigma(H_{\omega}),\qquad \omega\in\Omega(S),
$$
	\item  We have $\lim\limits_{n\to\infty}\delta_H\big(\overline{\Orb(S^n(\omega_0))},\Omega(S)\big)=0$.
	\item  Each directed path in $\Gtable$, starting in a vertex of $W(\omega_0)_T\subseteq \Aa^T$, does not contain a closed subpath.
\end{enumerate}
In particular, if $W(\omega_0)_T\subseteq W(S)$, then these equivalent conditions are satisfied.
\end{proposition}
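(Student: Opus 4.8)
The plan is to split the three statements into two groups: the equivalence of (i) and (ii) is spectral and essentially imported, while the equivalence of (ii) and (iii) is the combinatorial heart and is the special case of Theorem~\ref{thm:charact_convergence} that I would prove directly via $\Gtable$. For (i)$\Leftrightarrow$(ii) I would invoke the general correspondence between convergence of subshifts in $(\Jj,\delta_H)$ and convergence of the spectra of the associated finite-range Schrödinger operators in $(\Kk(\R),d_H)$, which applies because $\Gamma=\Z^2$ is amenable and $H$ is strongly pattern equivariant; this is exactly the input from \cite{BBdN18} quoted above, upgraded to all $\omega\in\Omega(S)$ since $\overline{\Orb(S^n(\omega_0))}\to\Omega(S)$ forces $\sigma(H_{S^n(\omega_0)})\to\sigma(H_\omega)$ uniformly over the limit subshift. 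No new work is required here.

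The core is (iii)$\Rightarrow$(ii). First I would record two elementary facts about $\Gtable$. \emph{(Backward propagation of illegality.)} If $R'\prec S(R)$ with $R'\notin W(S)$, then $R\notin W(S)$, since $R\prec S^m(a)$ would give $R'\prec S^{m+1}(a)$ and hence legality of $R'$. \emph{(Pullback.)} Because the inflation factor is $2$ and $K=\{-1,0\}^2$, a $2\times 2$ window meets at most a $2\times 2$ block of tiles, so every $T$-patch of $S(\omega)$ is a subpatch of $S(P)$ for some $T$-patch $P\prec\omega$. Iterating the pullback, any $T$-patch $R_n\prec S^n(\omega_0)$ carries a chain $R_n\prec S(R_{n-1})\prec\cdots$ terminating at $R_0\in W(\omega_0)_T$. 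If $R_n$ is illegal, backward propagation makes every $R_j$ illegal, so $(R_0,\dots,R_n)$ is a path in $\Gtable$ starting in $W(\omega_0)_T$; for $n\geq|\Vv|$ a repeated vertex forces a closed subpath, contradicting (iii). Hence $W(S^n(\omega_0))_T\subseteq W(S)$ for all large $n$.

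It remains to upgrade $T$-legality to legality at all scales and to match the dictionary of $\Omega(S)$. Here I would use that $T=\{0,1\}^2$ is a testing domain (Definition~\ref{def:TestingTuple}): iterating the pullback $m$ times shrinks any $B(e,R)$-window to one that fits, up to translation, inside $T$, so for $m$ large every $B(e,R)$-patch of $S^m(\eta)$ lies in $S^m(P')$ for a $T$-patch $P'$ of $\eta:=S^{n_0}(\omega_0)$; since $P'\prec S^k(a)$ is legal, the $B(e,R)$-patch is $\prec S^{m+k}(a)$ and hence legal. This gives $W(S^n(\omega_0))_{B(e,R)}\subseteq W(S)=W(\Omega(S))$, the last identity holding by primitivity. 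The reverse inclusion is again primitivity: every legal $B(e,R)$-patch sits in some $S^m(a)$, and for $n$ large $S^n(b)$ contains a translate of $S^m(a)$ for a letter $b$ occurring in $\omega_0$. Together these yield $W(S^n(\omega_0))_{B(e,R)}=W(\Omega(S))_{B(e,R)}$ for all large $n$, which is precisely (ii) by~\eqref{eq:HausdorffMetric_Dictionaries}.

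For (ii)$\Rightarrow$(iii) I would argue by contraposition. If some path from $W(\omega_0)_T$ meets a cycle, there are paths $(R_0,\dots,R_n)$ from $R_0\in W(\omega_0)_T$ of every length $n\geq n_1$. Forward propagation—$R_0\prec\omega_0$ and $R_{j+1}\prec S(R_j)$ give $R_j\prec S^j(\omega_0)$—produces an illegal $T$-patch $R_n\prec S^n(\omega_0)$ for each such $n$. Fattening $R_n$ to the surrounding $B(e,r_T)$-patch (with $T\subseteq B(e,r_T)$) gives an element of $W(S^n(\omega_0))_{B(e,r_T)}$ that cannot lie in $W(\Omega(S))_{B(e,r_T)}$, as its restriction $R_n$ is illegal; hence $\delta_H(\overline{\Orb(S^n(\omega_0))},\Omega(S))\geq\tfrac{1}{r_T+1}$ for all large $n$ and (ii) fails. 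Finally, the ``in particular'' claim is immediate: if $W(\omega_0)_T\subseteq W(S)$ then every starting vertex is legal, and legal vertices have no outgoing edges in $\Gtable$ (edges require both endpoints illegal), so every path from $W(\omega_0)_T$ is a single vertex and (iii) holds trivially. The main obstacle is the upgrade step: proving that $T=\{0,1\}^2$ is genuinely a testing domain, i.e.\ that $T$-legality propagates to legality at every scale under further substitution, which is exactly where the general machinery of testing tuples and the pullback estimate on window sizes does the decisive work.
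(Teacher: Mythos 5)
Your proposal is correct, but it is worth being precise about how it relates to the paper: the paper's own proof of Proposition~\ref{prop:TableTiling_charact_convergence} is a two-line citation (check primitivity, invoke Proposition~\ref{prop:blockSubst_DilationDatum} and Proposition~\ref{prop:test-euclid} to see that $(T,1)$ is a testing tuple, then apply Theorem~\ref{thm:charact_convergence}), whereas you unfold the general machinery in the special case. Your ``pullback'' lemma is exactly Lemma~\ref{lem:prev-step} ($N_T=1$ for the table tiling), your iterated chain is Lemma~\ref{lem:paths-ileg}, your upgrade from $T$-legality to legality at all scales is Lemma~\ref{lem:Patches_event_Legal}, and your primitivity argument for the reverse inclusion of dictionaries is Lemma~\ref{lem:LegalPatches_Appear}; so the mathematical content coincides with the paper's Section~\ref{sec:Cond-sect}. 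Two genuine differences: first, in (ii)$\Rightarrow$(iii) you observe that a cycle yields paths of \emph{every} sufficiently large length and hence illegal $T$-patches in $S^n(\omega_0)$ for all large $n$, which lets you conclude directly and bypass the subsequence argument of Corollary~\ref{cor:subseq-cond} that the paper routes through --- a small but real simplification. Second, the one step you defer rather than prove is that $T=\{0,1\}^2$ genuinely satisfies Definition~\ref{def:TestingTuple}~\eqref{enu:TestingDomain} for arbitrary radii $r$; your heuristic (iterated pullback shrinks any window into a translate of $T$) is the right idea, but the paper makes it rigorous via the explicit inclusion $\prod_j[-m_j^{n-1},m_j^n]\subseteq V(n,T)$ in Proposition~\ref{prop:test-euclid}, and you should either carry out that computation or cite it. Two minor points: for (i) as stated you need minimality of $\Omega(S)$ (from primitivity) to know $\sigma(H_\omega)$ is independent of $\omega\in\Omega(S)$, not just uniformity of the convergence; and in the ``in particular'' step, under the paper's convention that paths have length $\ell\in\N\geq 1$, a legal starting vertex admits \emph{no} path at all (rather than a length-zero one), so (iii) holds vacuously.
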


\begin{proof}
Due to Proposition~\ref{prop:blockSubst_DilationDatum}, the table tiling substitution falls in our general setting of substitutions defined in Section~\ref{sec:BeyondAbelian} and it is elementary to check that the table tiling substitution is primitive (Definition~\ref{def:primitive}). Then the statement follows from the main Theorem~\ref{thm:charact_convergence} in combintation with Proposition~\ref{prop:test-euclid}.
\end{proof}

\begin{cor}
\label{cor:TableTiling_Letter_BadAppr}
Let $S$ be the table tiling substitution map. For $a\in\Aa$, define $\omega_a\in\Aa^\Gamma$ by $\omega_a(\gamma)=a$ for all $\gamma\in\Z^2$. Then 
$$
\lim_{n\to\infty} \Orb(S^n(\omega_a)) \neq \Omega(S),\qquad a\in\Aa.
$$
In particular, for each $a\in\Aa$, there is a Schrödinger operator $H$ with finite range such that
$$
\lim\limits_{n\to\infty}\sigma(H_{S^n(\omega_a)}) \neq \sigma(H_{\omega}),\qquad \omega\in\Omega(S).
$$
\end{cor}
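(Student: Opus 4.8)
The plan is to prove Corollary~\ref{cor:TableTiling_Letter_BadAppr} by applying the characterization in Proposition~\ref{prop:TableTiling_charact_convergence}, reducing the problem to a concrete combinatorial check on the graph $\Gtable$ for each of the four constant configurations $\omega_a$. First I would observe that for the constant configuration $\omega_a$, the dictionary $W(\omega_a)_T$ consists of a single vertex of $\Gtable$: namely the monochromatic patch $P_a\in\Aa^T$ with $P_a(\eta)=a$ for all $\eta\in T=\{0,1\}^2$. By the equivalence of conditions (ii) and (iii) in Proposition~\ref{prop:TableTiling_charact_convergence}, it therefore suffices to show that for each letter $a\in\Aa$ the monochromatic vertex $P_a$ begins a directed path in $\Gtable$ that contains a closed subpath; equivalently, that some closed path in $\Gtable$ is reachable from $P_a$.

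The key combinatorial step is to exhibit, for each $a$, such a reachable cycle. Here I would use two facts about the table tiling substitution. Crucially, the monochromatic patch $P_a$ is itself illegal, i.e. $P_a\notin W(S)$: inspecting the list of $S$-legal $T$-patches in Figure~\ref{fig:TableTiling_T-legal_Patch}, one sees that no legal $2\times2$ patch is constant, so $P_a$ is not in $W(S)$ and hence $P_a$ is a genuine vertex of $\Gtable$ (recall edges of $\Gtable$ only connect illegal patches). Next I would track the image $S(P_a)$, which is a patch over $2K$-translates; extracting its $T$-subpatches $Q$ with $Q\prec S(P_a)$ and $Q\notin W(S)$ yields the out-edges of $P_a$. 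Iterating the substitution on $P_a$ keeps producing illegal $T$-patches — intuitively because gluing four copies of the substituted block of a single colour recreates monochromatic or near-monochromatic local configurations that never become legal — so the forward orbit of $P_a$ in $\Gtable$ stays among illegal vertices. Since $\Gtable$ is finite, any infinite directed walk must revisit a vertex, producing a closed subpath; thus it is enough to verify that the walk starting at $P_a$ does not terminate, i.e. that every illegal vertex encountered has at least one illegal $T$-subpatch in its substituted image.

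Thus the single combinatorial claim to establish is: \emph{starting from each monochromatic $P_a$, the iterated substitution always contains an illegal $T$-patch}, equivalently $W\bigl(S^n(\omega_a)\bigr)_T\not\subseteq W(S)$ for all $n$. I expect this to be the main obstacle, and I would handle it by a direct examination of the four substitution images and their legal $2\times2$ patches, checking case by case (using the symmetry of the table tiling rule to cut down the work) that a monochromatic or otherwise illegal corner pattern persists. Once the claim holds, condition (iii) of Proposition~\ref{prop:TableTiling_charact_convergence} fails for $\omega_a$, so by the equivalence (ii)$\Leftrightarrow$(iii) we get $\lim_{n\to\infty}\Orb(S^n(\omega_a))\neq\Omega(S)$, which is the first assertion. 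Finally, the spectral statement follows immediately: by the equivalence (i)$\Leftrightarrow$(ii) in the same proposition, the failure of the subshift convergence forces the existence of a finite-range Schrödinger operator $H$ for which $\sigma(H_{S^n(\omega_a)})$ does not converge to $\sigma(H_\omega)$, completing the proof.
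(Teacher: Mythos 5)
Your proposal is correct and follows essentially the same route as the paper: both reduce the claim to the failure of condition (iii) of Proposition~\ref{prop:TableTiling_charact_convergence} for the single monochromatic vertex $p_a\in W(\omega_a)_T$, and then invoke the equivalences (iii)$\Leftrightarrow$(ii)$\Leftrightarrow$(i). The only difference is presentational: the paper reads the closed subpath through $p_a$ directly off Figure~\ref{fig:TableTiling_Graph}, whereas you outline the (equivalent) finite check that the walk from $p_a$ never terminates and hence must close up.
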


\begin{proof}
Define $p_a\in\Aa^T$ to be $p_a(\gamma)=a$ for $\gamma\in T=\{0,1\}^2$.
The vertex $p_a\in W(\omega_a)_T$ is contained in a closed subpath in $\Gtable$, see Figure~\ref{fig:TableTiling_Graph}. Thus, Proposition~\ref{prop:TableTiling_charact_convergence}~(iii) does not hold, proving the statement.
\end{proof}

The operators whose spectrum is not convergent are constructed by choosing a large potential on the illegal patches.

On the positive side, Proposition~\ref{prop:TableTiling_charact_convergence} allows us also to find periodic approximations for the table tiling. Define $\omega_{rb},\omega_{gy}\in\Aa^{\Z^2}$, by
$$
\omega_{rb}(\gamma) 
	:= \begin{cases}
		\begin{tikzpicture}
			\filldraw [rot] (0,0.5) circle (2.5pt);
		\end{tikzpicture},\qquad \gamma\in (2\Z)^2 \cup (1,1)+(2\Z)^2,\\
		\begin{tikzpicture}
			\filldraw [blau] (0.7,0.5) circle (2.5pt);
		\end{tikzpicture},\qquad \textrm{else},
	\end{cases},
\qquad
\omega_{gy}(\gamma) 
	:= \begin{cases}
		\begin{tikzpicture}
			\draw [thin, fill=grau]  (1.05,0.5) circle (2.5pt);
		\end{tikzpicture} ,\qquad \gamma\in (2\Z)^2 \cup (1,1)+(2\Z)^2,\\
		\begin{tikzpicture}
			\filldraw [gelb] (0.35,0.5) circle (2.5pt);
		\end{tikzpicture},\qquad \textrm{else},
	\end{cases}
$$
for $\gamma\in\Z^2$, see a sketch in Figure~\ref{fig:TableTiling_PeriodicAppr}. Clearly, $\Orb(\omega_{rb})$ and $\Orb(\omega_{gy})$ contain only two different elements, namely they are periodic.
With this at hand, we obtain the following statement using that $\Omega(S)$ for the table tiling substitution $S$ is linearly repetitive with repetitivity constant $\CLR>0$, see Definition~\ref{def:lin-rep} and Theorem~\ref{thm:Omega(S)}.

\begin{proposition}
\label{prop:Table_PeriodicAppr}
Let $S$ be the table tiling substitution. Then $\Omega(S)$ is periodically approximable and
$$
\lim_{n\to\infty} \Orb\big(S^n(\omega_{rb})\big) = \Omega(S) = \lim_{n\to\infty} \Orb\big(S^n(\omega_{gy})\big).
$$
Moreover, for each strongly pattern equivariant Schrödinger operator $H$ with finite range, we have for $i\in\{ rb, gy\}$,
$$
d_H\big( \sigma(H_{S^n(\omega_i)}), \sigma(H_{\omega}) \big) \leq \frac{\max\{2\CLR,4\}}{2^n} ,\qquad \omega\in\Omega(S), n\geq \frac{\log(2\CLR)}{\log(2)},
$$
where $\CLR>0$ is the linear repetitivity constant of the table tiling.
\end{proposition}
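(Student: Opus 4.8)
The plan is to establish the claimed statement in two independent parts. First I would prove the set-theoretic convergence $\lim_{n\to\infty}\Orb\big(S^n(\omega_i)\big)=\Omega(S)$ for $i\in\{rb,gy\}$ by verifying the combinatorial criterion in Proposition~\ref{prop:TableTiling_charact_convergence}~(iii). Concretely, I would inspect the two periodic configurations $\omega_{rb}$ and $\omega_{gy}$ and compute the set of $T$-patches $W(\omega_i)_T\subseteq \Aa^T$ that actually occur, where $T=\{0,1\}^2$. The key observation is that each periodic configuration is built by tiling the plane with a single $2\times 2$ block and its translate, so only a handful of distinct $T$-patches appear (those obtained as windows straddling the block boundaries). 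I would then check that every such $T$-patch is in fact $S$-legal, i.e. $W(\omega_i)_T\subseteq W(S)$. Once this is verified, the final sentence of Proposition~\ref{prop:TableTiling_charact_convergence} immediately gives that the equivalent conditions hold, and in particular condition~(ii) delivers $\lim_{n\to\infty}\delta_H\big(\overline{\Orb(S^n(\omega_i))},\Omega(S)\big)=0$, which is the desired convergence.

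The second and quantitative part is to upgrade this convergence to the explicit rate $d_H\big(\sigma(H_{S^n(\omega_i)}),\sigma(H_\omega)\big)\leq \max\{2\CLR,4\}/2^n$. Here I would invoke Theorem~\ref{thm:upper-bound-estim} (or its spectral consequence Corollary~\ref{cor:SpectralEstimates}), which asserts that whenever the spectra converge the decay is exponentially fast, with a rate governed by the linear repetitivity constant $\CLR$ of $\Omega(S)$. The input I must supply is that $\Omega(S)$ is linearly repetitive; this is precisely Theorem~\ref{thm:Omega(S)}, which provides the constant $\CLR$. The dilation factor of the table tiling substitution is $2$ (the block $K=\{-1,0\}^2$ doubles linear scale under each application of $S$), which is the source of the base $2$ appearing in $2^{-n}$. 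The plan is to feed the value of $\CLR$ and the inflation factor into the exponential estimate and simplify the resulting constant to the stated form $\max\{2\CLR,4\}$, valid once $n\geq \log(2\CLR)/\log(2)$, which is the threshold ensuring the patch agreement radius exceeds the relevant repetitivity scale.

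I expect the main obstacle to be the careful bookkeeping in the quantitative second part rather than the combinatorial first part. The translation from the abstract exponential bound of Theorem~\ref{thm:upper-bound-estim} into the clean closed-form constant $\max\{2\CLR,4\}$ requires matching the radius $r$ in the Hausdorff-metric formula~\eqref{eq:HausdorffMetric_Dictionaries} with the scale at which $S^n(\omega_i)$ and a genuine element of $\Omega(S)$ first agree, and then tracking how the $1/(r+1)$ form of $\dc$ converts this agreement radius (which grows like $2^n$) into the bound $\leq 1/2^n$ on $\delta_H$, before the spectral comparison result transfers this to $d_H$ on the spectra. The threshold condition $n\geq \log(2\CLR)/\log(2)$ is exactly the statement that $2^n\geq 2\CLR$, ensuring the repetitivity box fits inside the agreement region; I would verify this interplay explicitly. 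By contrast, verifying $W(\omega_i)_T\subseteq W(S)$ is a finite check against the list of $S$-legal $T$-patches displayed in Figure~\ref{fig:TableTiling_T-legal_Patch}, so I anticipate no conceptual difficulty there, only the need to enumerate the finitely many boundary windows of the periodic configurations $\omega_{rb}$ and $\omega_{gy}$ and confirm each lies among the legal patterns.
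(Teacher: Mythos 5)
Your proposal follows essentially the same route as the paper: verify $W(\omega_i)_T\subseteq W(S)$ against the list of legal $T$-patches and invoke Proposition~\ref{prop:TableTiling_charact_convergence}, then obtain the rate from the general exponential machinery (the paper cites Corollary~\ref{cor:RateConv_Zd}, the block-substitution specialization of Theorem~\ref{thm:upper-bound-estim}); the only ingredient you leave implicit is Proposition~\ref{prop:Periodic_Sn}, which guarantees that $S^n(\omega_i)$ remains periodic and hence that $\Omega(S)$ is \emph{periodically} approximable. Be aware that plugging $\lambda_0=2$ into Corollary~\ref{cor:RateConv_Zd} yields the constant $16\,\CLR$ rather than $\max\{2\CLR,4\}$, so reaching the stated sharper constant requires a sharper instantiation of Proposition~\ref{prop:ConvergenceRate} (smaller $s$ and $C_-$ tailored to this specific substitution) --- a bookkeeping point that the paper's own one-line citation also glosses over.
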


\begin{figure}[hbt] 
\begin{center}
\includegraphics[scale=2.65]{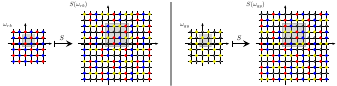}
\caption{The configurations $\omega_{rb},S(\omega_{rb}),\omega_{gy},S(\omega_{gy})\in\Aa^{\Z^2}$ are plotted. The gray shaded area indicated the block that is periodically extended.}
\label{fig:TableTiling_PeriodicAppr}
\end{center}
\end{figure}

\begin{remark}
Note that $S^n(\omega_{rb})$ and $S^n(\omega_{gy})$ are periodic by Proposition~\ref{prop:Periodic_Sn}. Moreover, $\Omega(S)$ is linearly repetitive (see Definition~\ref{def:lin-rep}) with linear repetitivity constant $\CLR>0$ since $S$ is primitive.
\end{remark}

\begin{proof}
Let $T=\{0,1\}^2$. Using Figure~\ref{fig:TableTiling_T-legal_Patch} and Figure~\ref{fig:TableTiling_PeriodicAppr}, we obtain $W(\omega_{rb})_T\subseteq W(S)$ and $W(\omega_{gy})_T\subseteq W(S)$.
Thus, $\lim_{n\to\infty} \Orb(S^n(\omega_i)) = \Omega(S)$ follows from Proposition~\ref{prop:TableTiling_charact_convergence} for $i\in\{ rb, gy\}$. Namely, $\Omega(S)$ is periodically approximable since $S^n(\omega_i)$ is periodic by Proposition~\ref{prop:Periodic_Sn}. 
The quantitative estimates are now a consequence of Corollary~\ref{cor:RateConv_Zd}.
\end{proof}

These specific periodic approximations were already found in the thesis \cite[Section 7.7]{BeckusThesis}. However, the spectral estimates are new, as well as the fact that all single letter approximations fail to converge. 
Like Proposition~\ref{prop:TableTiling_charact_convergence}, these spectral estimates are a special case of our second main result Theorem~\ref{thm:upper-bound-estim}, which holds in the realm of substitutions on lattices of homogeneous Lie groups.

\section{Main results for general substitution systems} 
\label{sec:BeyondAbelian}

In this section, we introduce the class of substitutions studied in this work following \cite{BHP21-Symbolic}. 
In addition, we present our main results proven in the subsequent sections and their consequences.

\subsection{Symbolic substitution systems}
\label{subsec:Substitutions_general}

Following \cite{BHP21-Symbolic}, one needs some geometric data -- a so-called dilation datum -- and combinatorial data -- a so-called substitution datum --  to describe a substitution on $\Aa^\Gamma$.
For convenience of the reader, we borrowed the notation of \cite{BHP21-Symbolic}.

A tuple $\Dd=\big( G,d, (D_\lambda)_{\lambda>0}, \Gamma , V  \big)$ is called a \emph{dilation datum}, if
\begin{enumerate}[(D1)]
	\item $G$ is a $1$-connected, locally compact, second countable, Hausdorff group and $d$ is a proper (i.e. closed balls with finite radius are compact), left-invariant metric on $G$ inducing the given topology on $G$;
	\item $(D_\lambda)_{\lambda>0}$ is a one-parameter group of automorphisms of $G$, called the underlying dilation family, such that
	$$
		d\big( D_\lambda(g), D_\lambda(h) \big) = \lambda \cdot d(g,h) \quad \text{for all} \quad \lambda>0, \; \text{and} \; g,h\in G; 
	$$
	\item $\Gamma< G$ is a uniform lattice satisfying that $D_\lambda[\Gamma] \subseteq \Gamma$ for some $\lambda > 1$ and $V$ is a Borel measurable, relatively compact  left-fundamental domain for $\Gamma$ such that $e$ is an element in the interior of $V$.
\end{enumerate}

Recall that a subgroup $\Gamma< G$ is called a \emph{uniform lattice} of a locally compact second countable Hausdorff group $G$, if $\Gamma$ is discrete and co-compact. A set $V\subseteq G$ is called a \emph{left-fundamental domain} for a uniform lattice $\Gamma$, if $G= \bigsqcup_{\gamma\in \Gamma} \gamma V$. Since we will only consider left-fundamental domains in this work, which in addition are Borel and relatively compact, we will refer to those simply as {\em fundamental domains}.

\medskip

For a fundamental domain $V$, $n\in\N$, $\lambda>1$ and $\emptyset\neq M\subseteq \Gamma$, we recursively define for $n\in\N$,
\begin{equation}
\label{eq:recursive_V(n)}
V_\lambda(0,M):=M\cdot V 
	\qquad \text{and} \qquad 
V_\lambda(n,M):=D_{\lambda}\Big[\big( V(n-1,M)\cap \Gamma \big)\cdot  V \Big] .
\end{equation}
Moreover, set $V_\lambda(n):=V_\lambda(n,\{e\})$ for $n\in\N$, where $e$ is the neutral element in $G$.
The intersection of $\Gamma$ with these sets are precisely the support of the iterates $S^n(P)$ of a patch $P\in\Aa^M$, see Proposition~\ref{prop:SubstitutionMap}~\eqref{enu:support_Sn_M} below. 

\medskip

Let $\Dd=(G,d,(D_\lambda)_{\lambda>0}, \Gamma,V)$ be a dilation datum. Then $\lambda_0>1$ is called {\em stretch factor associated with $\Dd$} if $D_{\lambda_0}[\Gamma]\subseteq \Gamma$ and $\lambda_0$ is sufficiently large relative to $V$. Here, $\lambda_0$ is called {\em sufficiently large relative to $V$} if there exist a constant $C_->0$, an integer $s\in\N_0:=\N \cup \{0\}$ and a $z\in\Gamma$ such that for all $n\in\N$,
$$
	D_{\lambda_0}^n\big[B\big(z,C_-)\big] \subseteq V_{\lambda_0}(s+n).
$$
If the underlying dilation datum $\Dd$ being referred to is clear, we will call $\lambda_0$ just a {\em stretch factor}. From now on, we use the notation $V(n,M)=V_{\lambda_0}(n,M)$ and $V(n)=V_{\lambda_0}(n)$ for $n\in\N_0$ and $M\subseteq \Gamma$ where $\lambda_0>1$ is a fixed stretch factor.

\medskip

A group $\Gamma$ that is part of a dilation datum as above is called a {\em homogeneous substitution lattice}.  Restricting the metric $d$ to $\Gamma \times \Gamma$ yields a left-invariant, proper metric $d_{\Gamma}$ on $\Gamma$. By \cite[Proposition~3.36]{BHP21-Symbolic}, every homogeneous substitution lattice has  \emph{exact polynomial growth} with respect to $d_{\Gamma}$, i.e.,\@  there exist constants $C>0$ and $\kappa>0$ such that
\[  
\lim_{r\to\infty}\frac{ |B^{\Gamma}(r)| }{Cr^\kappa} = 1,
\]
where $B^{\Gamma}(r) = \big\{ \gamma \in \Gamma:\, d_{\Gamma}(e,\gamma) \leq r \big\}$. The constant $\kappa$ only depends on $G$, and is called the \emph{homogeneous dimension}. As a consequence, every homogeneous substitution lattice $\Gamma$ is amenable, compare with \cite[Proposition~4.4.~(a)]{BHP20-LR}.

\medskip

A \emph{substitution datum} $\Ss:=(\Aa,\lambda_0, S_0)$ over a dilation datum $\Dd:=\big( G,d, (D_\lambda)_{\lambda>0}, \Gamma , V  \big)$ consists of
\begin{enumerate}[(S1)]
	\item a finite set $\Aa$ called the \emph{alphabet};
	\item a stretch factor $\lambda_0>1$ associated with $\Dd$;
	\item a map $S_0:\Aa\to \Aa^{D_{\lambda_0}[V]\cap \Gamma}$ called the \emph{substitution rule}.
\end{enumerate}

\begin{definition}
\label{def:substitution_general}
A {\em substitution on $\Aa^\Gamma$} is a dilation datum $\Dd$ together with a substitution datum $\Ss$. 
\end{definition}

\begin{example}
The table tiling substitution introduced in Section~\ref{subsec:tableTiling} is induced by the dilation $\Dd:=\big( G,d, (D_\lambda)_{\lambda>0}, \Gamma , V  \big)$ where 
$$
G=\R^2, \quad 
d(x,y):= \max_{1\leq j \leq 2} |x_j-y_j|, \quad 
D_\lambda(x_1,x_2\color{black})=(\lambda x_1,\lambda x_2\color{black}), \quad
\Gamma=\Z^2, \quad 
V=\big[-\tfrac{1}{2},\tfrac{1}{2}\big)^2
$$
and the substitution datum $\Ss:=(\Aa,\lambda_0, S_0)$ is
$$
\Aa=\{
\text{
\hspace*{-0.2cm}
\begin{tikzpicture}
	\filldraw [rot] (0,0.5) circle (2.5pt);
\end{tikzpicture}
\hspace*{-0.2cm} , \hspace*{-0.2cm}
\begin{tikzpicture}
	\filldraw [gelb] (0.35,0.5) circle (2.5pt);
\end{tikzpicture}
\hspace*{-0.1cm}, \hspace*{-0.2cm}
\begin{tikzpicture}
	\filldraw [blau] (0.7,0.5) circle (2.5pt);
\end{tikzpicture}
\hspace*{-0.1cm}, \hspace*{-0.2cm}
\begin{tikzpicture}
	\draw [thin, fill=grau]  (1.05,0.5) circle (2.5pt);
\end{tikzpicture} \hspace*{-0.2cm}
}
\}, \qquad 
\lambda_0=2,\qquad 
S_0 \textrm{ as defined in Section~\ref{subsec:tableTiling} where }  D_{\lambda_0}[V]\cap\Gamma= \{-1,0\}^2.
$$
The reader is referred to Proposition~\ref{prop:blockSubst_DilationDatum} for more details.
\end{example}

\begin{lemma}[\cite{BHP21-Symbolic}]
\label{lem:SuffLarge_General}
Let $\Dd=(G,d,(D_\lambda)_{\lambda>0}, \Gamma,V)$ be a dilation datum and $r_-,r_+>0$ be such that 
\begin{equation}
\label{eq:radii_r+_and_r-}
B(e,r_-) \subseteq V \subseteq \overline{V} \subseteq B(e,r_+).
\end{equation}
\begin{enumerate}[(a)]
\item If $\lambda_0>1+\frac{r_+}{r_-}$, then $\lambda_0$ is sufficiently large relative to $V$ with respect to $C_-=\frac{r_-}{\lambda_0}\left(\lambda_0-\big(1+\frac{r_+}{r_-}\big)\right)$, $s=0$ and $z=e$ for all $n\in\N$.
\item If there exists an $r> \frac{r_+\lambda_0}{\lambda_0-1}$, an $s\in \N_0$ and a $z\in\Gamma$ with $B(z,r)\subseteq V(s)$, then $\lambda_0$ is sufficiently large relative to $V$ with respect to $C_-=(r-r_+)-\frac{r}{\lambda_0}$, $s$ and $z$.
\end{enumerate}
\end{lemma}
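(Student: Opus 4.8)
The plan is to reduce both parts to a single geometric \emph{inflation step} and then to run the same affine recursion twice, arranged so that the hypothesis on $\lambda_0$ (resp.\ on $r$) appears precisely as the condition under which the respective base case closes. First I would prove the inflation step: for every $w\in G$, every $m\in\N_0$ and every radius $\rho>r_+$,
\[
B(w,\rho)\subseteq V_{\lambda_0}(m)\ \Longrightarrow\ B\big(D_{\lambda_0}(w),\,\lambda_0(\rho-r_+)\big)\subseteq V_{\lambda_0}(m+1).
\]
The argument is: given $x$ with $d(x,w)<\rho-r_+$, the fundamental domain property $G=\bigsqcup_{\gamma\in\Gamma}\gamma V$ yields a unique $\gamma\in\Gamma$ with $x\in\gamma V$; writing $x=\gamma v$ with $v\in V\subseteq B(e,r_+)$ and using left-invariance gives $d(x,\gamma)=d(v,e)<r_+$, so $d(\gamma,w)\le d(\gamma,x)+d(x,w)<\rho$. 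Hence $\gamma\in B(w,\rho)\cap\Gamma\subseteq V_{\lambda_0}(m)\cap\Gamma$ and $x\in(V_{\lambda_0}(m)\cap\Gamma)\cdot V$, i.e.\ $B(w,\rho-r_+)\subseteq(V_{\lambda_0}(m)\cap\Gamma)\cdot V$. Applying $D_{\lambda_0}$, using $d(D_{\lambda_0}g,D_{\lambda_0}h)=\lambda_0 d(g,h)$ to identify $D_{\lambda_0}[B(w,\rho-r_+)]$ with $B(D_{\lambda_0}(w),\lambda_0(\rho-r_+))$, and invoking the recursion $V_{\lambda_0}(m+1)=D_{\lambda_0}[(V_{\lambda_0}(m)\cap\Gamma)\cdot V]$ completes the step.

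Next I would set $c:=\frac{\lambda_0 r_+}{\lambda_0-1}$, the fixed point of $\rho\mapsto\lambda_0(\rho-r_+)$; note $c>r_+$ and $(\lambda_0-1)c=\lambda_0 r_+$. The role of this constant is that feeding a radius of the shape $\lambda_0^{k}C_-+c$ into the inflation step gives
\[
\lambda_0\big((\lambda_0^{k}C_-+c)-r_+\big)=\lambda_0^{k+1}C_-+\lambda_0(c-r_+)=\lambda_0^{k+1}C_-+c,
\]
so the shape is reproduced with the exponent raised by one, and the center moves $w\mapsto D_{\lambda_0}(w)$. Consequently it suffices to establish a single base case and then induct on $n$: once $B(D_{\lambda_0}^{n}(z),\lambda_0^{n}C_-+c)\subseteq V_{\lambda_0}(s+n)$ is known, the defining property of ``sufficiently large'' follows from $D_{\lambda_0}^{n}[B(z,C_-)]=B(D_{\lambda_0}^{n}(z),\lambda_0^{n}C_-)\subseteq B(D_{\lambda_0}^{n}(z),\lambda_0^{n}C_-+c)\subseteq V_{\lambda_0}(s+n)$, with $z=e$, $s=0$ in part (a) (so all centers equal $e$) and $z,s$ the given data in part (b).

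It then remains only to check the base cases, and this is where the hypotheses enter. For part (b), the base case $n=0$ asks for $B(z,C_-+c)\subseteq V_{\lambda_0}(s)$; since $B(z,r)\subseteq V_{\lambda_0}(s)$ this reduces to $C_-+c\le r$, and substituting $C_-=(r-r_+)-\frac{r}{\lambda_0}$ rearranges exactly to $r\ge\frac{r_+\lambda_0}{\lambda_0-1}$, the standing hypothesis (the same inequality also yields $C_->0$). For part (a) the ball $B(e,r_-)\subseteq V=V_{\lambda_0}(0)$ is too small to feed into the inflation step, since $r_-<r_+$, so I would first produce a genuine base at $n=1$ by observing that $e\in V\cap\Gamma$ forces $(V\cap\Gamma)\cdot V\supseteq V\supseteq B(e,r_-)$, whence $B(e,\lambda_0 r_-)\subseteq V_{\lambda_0}(1)$. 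The base case then asks for $\lambda_0 C_-+c\le\lambda_0 r_-$, and substituting $C_-=\frac{r_-}{\lambda_0}\big(\lambda_0-(1+\frac{r_+}{r_-})\big)$ rearranges precisely to $\lambda_0>1+\frac{r_+}{r_-}$.

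The only genuinely geometric point, and the step I expect to require the most care, is the inflation step: extracting from the mere containment $B(w,\rho)\subseteq V_{\lambda_0}(m)$ a lattice point $\gamma\in V_{\lambda_0}(m)\cap\Gamma$ whose translate $\gamma V$ covers a prescribed $x$. This relies on combining the tiling property of $V$ with the two-sided bounds $B(e,r_-)\subseteq V\subseteq\overline V\subseteq B(e,r_+)$ and the exact scaling of $d$ under $D_{\lambda_0}$. Everything afterwards is bookkeeping for the affine recursion $\rho\mapsto\lambda_0(\rho-r_+)$, deliberately set up so that each threshold on $\lambda_0$ (resp.\ on $r$) is equivalent to the solvability of the corresponding base case; I expect those reductions to be routine.
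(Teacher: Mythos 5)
Your argument is correct, but note that this paper does not actually prove Lemma~\ref{lem:SuffLarge_General}: it is imported verbatim from \cite{BHP21-Symbolic} (the statement carries the citation and no proof environment follows it), so there is no in-paper proof to compare against. On its own merits your proposal is a valid, self-contained derivation. The inflation step is sound: the tiling property $G=\bigsqcup_{\gamma\in\Gamma}\gamma V$ together with $\overline V\subseteq B(e,r_+)$ and left-invariance does place the relevant lattice point $\gamma$ inside $B(w,\rho)$, and the exact scaling of $d$ under $D_{\lambda_0}$ turns $B(w,\rho-r_+)\subseteq (V_{\lambda_0}(m)\cap\Gamma)\cdot V$ into the claimed inclusion at level $m+1$. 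The fixed-point bookkeeping with $c=\tfrac{\lambda_0 r_+}{\lambda_0-1}$ checks out ($\lambda_0(c-r_+)=c$ and $c>r_+$, so every radius $\lambda_0^nC_-+c$ stays above $r_+$), and both base-case inequalities reduce to exactly the stated hypotheses, with strictness supplying $C_->0$ as the definition of ``sufficiently large'' requires. The one place needing care --- that in part (a) the ball $B(e,r_-)\subseteq V$ is too small to feed the inflation step directly, so the induction must start at $n=1$ via $e\in V\cap\Gamma$ and $B(e,\lambda_0 r_-)\subseteq V_{\lambda_0}(1)$ --- is handled correctly, and since the definition only demands the inclusion for $n\in\N$, i.e.\ $n\geq 1$, starting there is sufficient.
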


Recall the notion $\Patstar:=\{P:M\to\Aa\,|\, M\subseteq\Gamma\}$. With this, one has 
 $\Aa^\Gamma\subseteq\Patstar$.

\begin{proposition}[Substitution map] \label{prop:substitutionmap}
\label{prop:SubstitutionMap}
Consider a substitution on $\Aa^\Gamma$ with substitution datum $\Ss:=(\Aa,\lambda_0, S_0)$ over a dilation datum $\Dd:=\big( G,d, (D_\lambda)_{\lambda>0}, \Gamma , V  \big)$. Then there exists a unique map $S:\Patstar\to\Patstar$ such that 
\begin{itemize}
\item $S^n(\gamma P) = D_{\lambda_0}^n(\gamma)S^n(P)$ holds for all $P \in\Patstar$ and $\gamma\in\Gamma$, (equivariance condition)
\item $\big( S^n(P) \big) \vert_{V(n,M)\cap \Gamma}= S^n \big( P\vert_M \big)$ for all $P \in \Patstar$, $n\in\N_0$ and $M\subseteq \Gamma$ nonempty. (restriction condition)
\end{itemize}
Moreover, the following holds.
\begin{enumerate}[(a)]
\item \label{enu:S_cont} The restriction $S:\Aa^\Gamma\to\Aa^\Gamma$ to $\Aa^\Gamma\subseteq\Patstar$ is continuous.
\item \label{enu:support_Sn_M} If $n\in\N_0$, $P\in\Patstar$ has finite support $M$, then $S^n(P)$ has support $V(n,M)\cap\Gamma$.
\item \label{enu:support_Sn_Equivariant} For all $n\in\N_0$, $\gamma\in\Gamma$ and $M\subseteq \Gamma$ finite, we have $V(n,\gamma M)= D^n(\gamma)V(n,M)$.
\end{enumerate}
\end{proposition}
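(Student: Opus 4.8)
The plan is to construct the substitution map $S$ explicitly on single letters first, then extend it to arbitrary patches by the equivariance and restriction conditions, and finally verify the three additional claims. The starting point is the substitution rule $S_0:\Aa\to\Aa^{D_{\lambda_0}[V]\cap\Gamma}$. Since a patch $P\in\Aa^M$ is a collection of letters $P(\eta)$ placed at the lattice points $\eta\in M$, the natural definition is to apply $S_0$ to each letter and place the resulting block at the dilated-and-translated position $D_{\lambda_0}(\eta)$. Concretely, I would set
\begin{equation*}
S(P)(D_{\lambda_0}(\eta)\cdot\xi):=S_0\big(P(\eta)\big)(\xi),\qquad \eta\in M,\ \xi\in D_{\lambda_0}[V]\cap\Gamma,
\end{equation*}
and then define $S^n$ by iteration. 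The first step is to check this is well-defined, i.e.\@ that the supports of the translated blocks $D_{\lambda_0}(\eta)\cdot\big(D_{\lambda_0}[V]\cap\Gamma\big)$ are pairwise disjoint as $\eta$ ranges over $M$ and that their union is exactly $V(1,M)\cap\Gamma$. This disjointness is where the fundamental-domain property of $V$ enters: since $G=\bigsqcup_{\gamma\in\Gamma}\gamma V$ and $D_{\lambda_0}$ is an automorphism preserving $\Gamma$, the dilated copies tile without overlap.

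Next I would establish the two defining properties. The \emph{equivariance condition} $S^n(\gamma P)=D_{\lambda_0}^n(\gamma)S^n(P)$ follows for $n=1$ directly from the displayed definition, because translating $P$ by $\gamma$ shifts each source point $\eta\mapsto\gamma\eta$, hence each target block by $D_{\lambda_0}(\gamma)$; the general $n$ follows by induction using that $D_{\lambda_0}$ is a homomorphism. The \emph{restriction condition} $\big(S^n(P)\big)\vert_{V(n,M)\cap\Gamma}=S^n(P\vert_M)$ is essentially a bookkeeping statement: applying $S^n$ and then restricting to the part of the support coming from $M$ agrees with first restricting $P$ to $M$ and then substituting, precisely because the definition acts letter-wise and the support decomposes along the recursion \eqref{eq:recursive_V(n)}. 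Here I would prove both the identity and the support claim \eqref{enu:support_Sn_M} simultaneously by induction on $n$, since they are intertwined: knowing $\supp S^{n-1}(P)=V(n-1,M)\cap\Gamma$ lets me read off $\supp S^n(P)=D_{\lambda_0}\big[\big(V(n-1,M)\cap\Gamma\big)\cdot V\big]\cap\Gamma=V(n,M)\cap\Gamma$ straight from \eqref{eq:recursive_V(n)}.

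For uniqueness, I would argue that any map satisfying the two bullet conditions is forced to agree with the above on single letters (take $M=\{e\}$, $n=1$, use restriction to pin down $S(a)$ in terms of $S_0$) and then, via equivariance and restriction applied to each $\eta\in M$ separately, on all finite patches, and finally on all of $\Patstar$ by the letter-wise/local nature of the construction. Claim \eqref{enu:support_Sn_Equivariant}, $V(n,\gamma M)=D^n(\gamma)V(n,M)$, is a direct consequence of the equivariance condition together with \eqref{enu:support_Sn_M}: apply $S^n$ to $\gamma P$ for any $P$ with support $M$, take supports of both sides, and use that $\supp\big(D_{\lambda_0}^n(\gamma)\,Q\big)=D_{\lambda_0}^n(\gamma)\,\supp Q$. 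The remaining assertion \eqref{enu:S_cont}, continuity of $S:\Aa^\Gamma\to\Aa^\Gamma$, is where I expect the only genuine subtlety. Because $\dc$ is an ultrametric determined by agreement on balls $B(e,r)$, continuity amounts to showing that the value of $S(\omega)$ on any finite ball depends only on $\omega$ restricted to a (larger) finite ball; this should follow because each output letter $S(\omega)(\delta)$ is determined by the single input letter $\omega(\eta)$ whose block contains $\delta$, and that $\eta$ lies within a bounded distance of $D_{\lambda_0}^{-1}(\delta)$ controlled by the diameter of $V$ via the radii $r_\pm$ of Lemma~\ref{lem:SuffLarge_General}.

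The main obstacle I anticipate is not any single property in isolation but keeping the induction on $n$ coherent: the restriction condition, the support formula \eqref{enu:support_Sn_M}, and the recursive definition \eqref{eq:recursive_V(n)} must be threaded together so that at each stage the support claimed by \eqref{eq:recursive_V(n)} matches the support produced by the letter-wise construction, and the disjoint-tiling argument must be invoked carefully at the dilated scale to ensure no collisions occur. Since this proposition is quoted from \cite[Proposition~2.7]{BHP21-Symbolic}, I would mirror that proof's organization while filling in the tiling and continuity details in the notation fixed above.
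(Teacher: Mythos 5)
The paper does not prove this proposition internally --- it defers entirely to \cite[Propositions~2.7 and 5.12, Lemma~5.16]{BHP21-Symbolic} --- so there is no in-paper argument to compare against; your letter-wise construction is the standard one, and its outline (disjoint tiling of $V(1,M)\cap\Gamma$ by the blocks $D_{\lambda_0}(\eta)\big(D_{\lambda_0}[V]\cap\Gamma\big)$, simultaneous induction for the restriction condition and the support formula, locality for continuity) is sound. Two points need tightening. First, your uniqueness argument does not close as written: the two displayed conditions never mention $S_0$, and they are satisfied verbatim by the letter-wise map built from \emph{any} rule $\Aa\to\Aa^{D_{\lambda_0}[V]\cap\Gamma}$, so taking $M=\{e\}$, $n=1$ in the restriction condition yields only the tautology $\big(S(a)\big)\vert_{V(1)\cap\Gamma}=S(a)$ and cannot ``pin down $S(a)$ in terms of $S_0$''. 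Uniqueness must be understood relative to the additional anchoring $S(a)=S_0(a)$ for $a\in\Aa$ (which the paper records right after the proposition); once that is imposed, your induction via restriction and equivariance does determine $S$ on all finite patches, and then on all of $\Patstar$ by applying the restriction condition over all finite $M$.

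Second, deriving \eqref{enu:support_Sn_Equivariant} from equivariance together with \eqref{enu:support_Sn_M} only gives $V(n,\gamma M)\cap\Gamma = D_{\lambda_0}^n(\gamma)V(n,M)\cap\Gamma$, since supports of patches are subsets of $\Gamma$; the claim, however, is an identity of subsets of $G$ (the sets $V(n,M)$ are unions of translates of the fundamental domain $V$), and the paper genuinely uses the full version, e.g.\@ in the disjointness argument $D^{n+1}(\gamma)V(n+1)\cap D^{n+1}(\eta)V(n+1)=V(n+1,\{\gamma\})\cap V(n+1,\{\eta\})=\emptyset$ inside the proof of Lemma~\ref{lem:V(n)_fundamental_domain}. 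The clean route is a direct induction on the recursion \eqref{eq:recursive_V(n)}: $V(0,\gamma M)=\gamma MV=\gamma V(0,M)$, and
\[
V(n,\gamma M)=D_{\lambda_0}\Big[\big(D_{\lambda_0}^{n-1}(\gamma)V(n-1,M)\cap\Gamma\big)\cdot V\Big]=D_{\lambda_0}^{n}(\gamma)\,D_{\lambda_0}\Big[\big(V(n-1,M)\cap\Gamma\big)\cdot V\Big]=D_{\lambda_0}^{n}(\gamma)V(n,M),
\]
using that $D_{\lambda_0}$ is an automorphism with $D_{\lambda_0}[\Gamma]\subseteq\Gamma$ --- the same mechanism the paper uses to prove Lemma~\ref{lem:SupportGrowth}. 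With these two repairs your reconstruction is complete.
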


\begin{proof}
This is proven in \cite[Proposition~2.7, Proposition~5.12, Lemma~5.16]{BHP21-Symbolic}.
\end{proof}

\begin{definition}
The unique map $S:\Patstar\to\Patstar$ in Proposition~\ref{prop:SubstitutionMap} associated with a substitution on $\Aa^\Gamma$ is called the {\em substitution map}.
\end{definition}

By convention, we identify a letter with a patch supported on $\{e\}$. With this convention at hand, one observes
$$
\Aa\subseteq \Patstar
\qquad\textrm{and}\qquad
S(a)=S_0(a).
$$

\begin{definition}[Legal patches]
\label{def:legalPatches}
Consider a substitution on $\Aa^\Gamma$ with substitution map $S$. A finite patch $P\in\Patstar$ is called {\em $S$-legal} if there is an $n\in\N$ and a letter $a\in\Aa$ satisfying $P\prec S^n(a)$. The set of all $S$-legal patches (with support $M\subseteq \Gamma$) is denoted by $W(S)$ respectively $W(S)_M$.
\end{definition}

Let $P,Q\in\Patstar$. Since the subpatch relation $\prec$ is transitive, we have for $n\in\N_0$,
$$ 
Q\in W(S) \textrm{ and } P\prec S^n(Q)
\qquad\Longrightarrow\qquad
P\in W(S),
$$
where $S^0(Q)=Q$. 
Legal patches are used to define an associated subshift 
$$
\Omega(S):=\{\omega\in\Aa^\Gamma\,|\, W(\omega)\subseteq W(S)\}
$$ 
with a substitution map $S$. 
We are mainly interested in primitive substitutions implying that the associated subshift $\Omega(S)$ is minimal. Here a subshift $\Omega\in\Jj$ is called {\em minimal}, if $\overline{\Orb(\omega)}=\Omega$ for all $\omega\in\Omega$. 

\begin{definition}[Primitive substitution]
\label{def:primitive}
A substitution on $\Aa^\Gamma$ with substitution map $S$ is called {\em primitive}, if there is an $L\in\N$ such that for all $a,b\in\Aa$, $a\prec S^L(b)$.
\end{definition}

In fact, if the substitution is primitive, then the subshift $\Omega(S)$ falls into a specific class of minimal subshifts introduced next.

\begin{definition}[Linearly repetitive subshifts]
\label{def:lin-rep}
A subshift $\Omega\in\Jj$ is called {\em linearly repetitive}, if there is a constant $C\geq 1$ such that for every $r\geq 1$, $R\geq Cr$ and $P\in W(\Omega)_{B(e,R)}$, we have 
\[	
Q\prec P \quad \text{for all} \quad Q\in W(\Omega)_{B(e,r)}.	
\] 
The smallest constant $\CLR=C\geq 1$ satisfying the previous condition is called the {\em linear repetitivity constant}.
\end{definition}

Let us summarize the previously discussed properties of the subshift associated with a substitution.

\begin{theorem}
\label{thm:Omega(S)}
Consider a substitution on $\Aa^\Gamma$ with substitution map $S$. Then $\Omega(S)\in\Jj$ is a subshift. If the substitution is primitive, then $\Omega(S)$ is minimal and linearly repetitive. In this case $W(S)=W(\omega)$ holds for all $\omega\in\Omega(S)$.
\end{theorem}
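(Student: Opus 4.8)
The plan is to separate the unconditional claim (that $\Omega(S)$ is a subshift) from the consequences of primitivity. For the subshift part I would check the three defining properties of $\Jj$. Invariance is immediate, since the subpatch relation $\prec$ already quotients out translations, so $W(\gamma\omega)=W(\omega)\subseteq W(S)$ whenever $\omega\in\Omega(S)$. Closedness follows from a soft limit argument: if $\omega_n\to\omega$ in $(\Aa^\Gamma,\dc)$ with $\omega_n\in\Omega(S)$ and $P\prec\omega$ is a finite subpatch occurring on a finite set contained in some $B(e,R)$, then $\omega_n$ agrees with $\omega$ on $B(e,R)$ for $n$ large, hence $P\prec\omega_n$ and $P\in W(\omega_n)\subseteq W(S)$; thus $W(\omega)\subseteq W(S)$. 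For nonemptiness I would exploit the defining property of the stretch factor: choose $C_->0$, $s\in\N_0$, $z\in\Gamma$ with $D_{\lambda_0}^n[B(z,C_-)]\subseteq V(s+n)$. Since $D_{\lambda_0}$ scales $d$ by $\lambda_0$ and $\gamma_n:=D_{\lambda_0}^n(z)\in\Gamma$, the patch $P_n:=\gamma_n^{-1}S^{s+n}(a)$ (for a fixed letter $a$) has dictionary contained in $W(S)$ and support containing $B(e,\lambda_0^nC_-)\cap\Gamma$. Extending each $P_n$ arbitrarily to $\hat\omega_n\in\Aa^\Gamma$ and passing to a subsequential limit $\omega$ by compactness of $\Aa^\Gamma$, every finite subpatch of $\omega$ is eventually read inside a translate of $S^{s+n}(a)$, hence legal, so $\omega\in\Omega(S)$.

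Under primitivity the engine is a uniform occurrence statement: every legal patch appears at all sufficiently high levels. First I would record that $S$ is monotone for $\prec$, i.e. $P\prec Q\Rightarrow S^k(P)\prec S^k(Q)$, which is read off from the equivariance and restriction conditions of Proposition~\ref{prop:SubstitutionMap} together with monotonicity of $M\mapsto V(k,M)$. Combining this with Definition~\ref{def:primitive} and an induction on the level, one gets $a\prec S^j(b)$ for all letters $a,b$ and all $j\ge L$. Hence, if $P$ is legal with $P\prec S^m(a_0)$, then for every $k\ge m+L$ and every letter $b$ we have $a_0\prec S^{k-m}(b)$ and therefore $P\prec S^k(b)$; so each legal $P$ occurs in $S^k(b)$ for all $b\in\Aa$ and all $k\ge N_P:=m+L$.

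Next I would upgrade uniform occurrence to a geometric recurrence statement via the super-tile decomposition $S^\ell(a)=S^{N_P}\big(S^{\ell-N_P}(a)\big)$: by Proposition~\ref{prop:SubstitutionMap} this exhibits $S^\ell(a)$ as a partition into translated super-tiles $D_{\lambda_0}^{N_P}(\gamma)\,S^{N_P}(c)$, each of diameter bounded by some $\Delta=O(\lambda_0^{N_P})$ and each containing $P$. Consequently there is a radius $R_P$ so that any legal patch whose support contains a ball of radius $R_P$ fully contains such a super-tile, hence contains $P$. Applying this to $\omega|_{B(e,R_P)\cap\Gamma}$ for arbitrary $\omega\in\Omega(S)$ gives $P\in W(\omega)$, and with the trivial inclusion $W(\omega)\subseteq W(S)$ this yields $W(\omega)=W(S)$ for all $\omega\in\Omega(S)$. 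The same recurrence shows every legal patch occurs relatively densely in every $\omega$, which via the dictionary description \eqref{eq:HausdorffMetric_Dictionaries} gives $\omega'\in\overline{\Orb(\omega)}$ for all $\omega,\omega'\in\Omega(S)$, i.e. minimality.

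Finally, linear repetitivity is the quantitative refinement and the main obstacle. Using $W(\Omega(S))=W(S)$, I must produce a single constant $\CLR$ with the property that every legal $P$ supported in $B(e,r)$ is a subpatch of every legal patch supported in $B(e,R)$ once $R\ge\CLR r$. By the previous paragraph it suffices to make the recurrence radius $R_P$ linear in $r$, i.e. $R_P\le\CLR r$. Since $R_P\approx\lambda_0^{N_P}=\lambda_0^{m_P+L}$, this reduces to bounding the first-occurrence level of a diameter-$r$ legal patch by $m_P\le\log_{\lambda_0}r+O(1)$, so that $R_P\lesssim\lambda_0^{L}\,r$. The hard part will be exactly this geometric control: one must show that a legal patch of diameter $\sim r$ is covered by boundedly many adjacent level-$m$ super-tiles with $\lambda_0^m\sim r$, and that such a bounded union embeds in a single $S^{m'}(b)$ with $m'-m$ bounded. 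This rests on the exact polynomial growth of $\Gamma$ and the uniform comparability of super-tile inradii and diameters coming from the fundamental-domain tiling, and is precisely the analysis of \cite{BHP20-LR}; I would invoke those estimates to extract a uniform $\CLR$, deferring the constant-tracking there, since every other part of the statement follows from the soft arguments above.
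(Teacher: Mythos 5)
Your proposal is essentially sound, but it takes a very different route from the paper: the paper's entire proof of Theorem~\ref{thm:Omega(S)} is a citation (to \cite[Theorem~7.4]{BHP21-Symbolic} in the general case, and to the standard literature for block substitutions), whereas you reconstruct most of the statement from first principles. Your soft arguments check out: invariance and closedness of $\Omega(S)$ are exactly as you say; nonemptiness via the ``sufficiently large'' property of $\lambda_0$ (translating $S^{s+n}(a)$ by $D_{\lambda_0}^n(z)^{-1}$ so that the supports exhaust balls around $e$, then passing to a subsequential limit) is a correct and genuinely nontrivial observation, since without the stretch-factor condition the supports $V(n)$ need not grow around any fixed point; and the chain monotonicity of $\prec$ under $S$ $\Rightarrow$ uniform occurrence of legal patches at all levels $\ge N_P$ $\Rightarrow$ super-tile recurrence $\Rightarrow$ $W(\omega)=W(S)$ and minimality is the standard primitive-substitution argument, correctly adapted to the non-abelian setting (the key inputs you implicitly use, namely the disjoint super-tile decomposition $V(\ell,M)=\bigsqcup_{\gamma}D_{\lambda_0}^{N}(\gamma)V(N)$ and the diameter bound $V(N)\subseteq B(e,\lambda_0^{N}C_+)$, are available via Proposition~\ref{prop:SubstitutionMap}, Lemma~\ref{lem:SupportGrowth} and \cite[Theorem~5.13]{BHP21-Symbolic}). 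The one piece you do not actually prove is linear repetitivity with a uniform constant $\CLR$; you correctly identify that the missing content is the bound $m_P\le\log_{\lambda_0}r+O(1)$ on first-occurrence levels together with the embedding of boundedly many adjacent super-tiles into a single $S^{m'}(b)$, and you defer this to \cite{BHP20-LR}. Since the paper defers the \emph{entire} theorem to the same group of references, this deferral is not a gap relative to the paper's own standard; what your approach buys is a self-contained and checkable proof of everything except the quantitative repetitivity constant, at the cost of length the authors chose to avoid.
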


\begin{proof}
In the case of block substitutions (defined in Section~\ref{subsec:Block_Substitutions}) this is well-known see e.g. \cite[Chapter 5]{Queff10} and \cite[Chapter 4]{BaakeGrimm13}. In the general case, this is proven in \cite[Theorem~7.4]{BHP21-Symbolic}.
\end{proof}

\subsection{Substitution graphs and testing domains} 
\label{subsec:TestingDomains}

We introduce substitution graphs generalizing the graph that was introduced earlier for the table tiling substitution. Recall the basic graph theoretic notions introduced in Section~\ref{subsec:Block_Substitutions}.

\begin{definition}[Substitution graph]
\label{def:S-graphs}
Consider a substitution on $\Aa^\Gamma$ with substitution map $S$. For a finite nonempty set $K\subseteq \Gamma$ and an $n\in \N$, we define the {\em substitution graph} $G_S(K;n)$ by the directed graph with vertex set $\Vv=\Aa^K$ and the edge set $\Ee$ defined by
$$
(P,Q)\in \Ee \quad :\Longleftrightarrow\quad Q\prec S^{n}(P) \textrm{ and } P,Q \not\in W(S).
$$ 
\end{definition}

We once again emphasize that we only have edges between patches that are not $S$-legal (illegal). In particular, if all patches $\Aa^K$ are $S$-legal, then $G_S(K;n)$ has no edges.  More precisely, the $S$-legal patches are isolated vertices in $G_S(K;n)$.  
 Therefore, for any $\omega \in \mathcal{A}^\Gamma$, this graph keeps track of the illegal $K$-patches in $S^{n}(\omega)$ coming from the $K$-patches in $\omega$.
 For a correct choice of $K$ and $n$, this is the crucial part to determining whether $\overline{Orb\big( S^n(\omega_0) \big)}$ converges  to $\Omega(S)$ or does not. \\ 
In \cite{Ten24}, the reader can find some graphs plotted for $\Gamma=\Z$. 
The graph $\Gtable$ associated with the table tiling substitution (Section~\ref{subsec:Block_Substitutions}) is a substitution graph with $G_S(T,N_T)$ for $T=\{0,1\}^2$ and $N_T=1$. 

\medskip

We will be interested in the substitution graph $G_S(T;N_T)$ where $(T,N_T)$ satisfies two additional geometric properties with respect to the underlying dilation datum and the stretch factor $\lambda_0$. 
This leads to the concept of a testing tuple. 
Therefore, recall the notion of the supports $V(n,T)$ for $n\in\N$ and $T\subseteq\Gamma$ recursively defined in equation~\eqref{eq:recursive_V(n)}.

\begin{definition}
\label{def:TestingTuple}
A tuple $(T,N_T)$ is called a {\em testing tuple} (for $\Dd$ with associated stretch factor $\lambda_0>1$) if
\begin{enumerate}[(a)]
\item \label{enu:TestingDomain} $T\subseteq \Gamma$ is finite and for every $r>0$, there exists an $N_r(T)\geq 0$, such that for each $n\geq N_r(T)$ and $x\in \Gamma$, there is a $\gamma:=\gamma(n,x)\in \Gamma$ satisfying 
$$
x B(e,r)\subseteq D_{\lambda_0}^n \big( \gamma \big)V(n,T).
$$
\item \label{enu:N_T} $N_T\in\N$ is the smallest integer $m\in \mathbb{N}$ such that for every $x\in \Gamma$, there exists a $\gamma_x\in \Gamma$ satisfying
\[ 
x T  \subseteq D_{\lambda_0}^m(\gamma_x) V(m,T). 
\]
\end{enumerate}
If $(T,N_T)$ is a testing tuple, $T$ is also called {\em testing domain}.
\end{definition}	

\begin{remark}
Note that the testing tuple depends only on the dilation datum $\Dd$ and the associated stretch factor $\lambda_0>1$. Thus, it is independent of the alphabet $\Aa$ and the substitution rule.

Moreover, if $(T,N_T)$ is a testing tuple, then $(xT,N_T)$ is also a testing tuple for all $x\in\Gamma$. This follows immediately from the identity $V(n,xT) = D^n(x)V(n,T)$, see Proposition~\ref{prop:SubstitutionMap}~\eqref{enu:support_Sn_Equivariant}.
\end{remark}

\begin{example}
Recall the table tiling substitution introduced in Section~\ref{subsec:tableTiling} fitting in our setting (see Proposition~\ref{prop:blockSubst_DilationDatum}). Then a possible testing tuple of the table tiling is $(T,1)$ where $T=\{0,1\}^2$, see Proposition~\ref{prop:test-euclid}. Therefore the following theorem applies to the graph $\Gtable$ defined in Section~\ref{subsec:tableTiling}.
\end{example}

The notion of testing tuple is fundamental to check the convergence in our main Theorem~\ref{thm:charact_convergence}. In particular, it is a crucial ingredient in Lemma~\ref{lem:Patches_event_Legal} below.

In fact, the existence of a testing tuple is no restriction as it always exists, see Proposition~\ref{prop:TestingTuple_Existence}. We postpone the details for now but let us note the following. While a testing tuple always exists, it can be hard to find a ``minimal'' (in terms of cardinality) testing domain. We provide in Section~\ref{sec:Testing_Tuple_reduction} an algorithm \cite{TestHeisen24} to reduce the size of the testing domain and apply it in the case of the Heisenberg group.
Note that the smaller the set $T$, the smaller the vertex set of $G_S(T;N_T)$ is. 
Therefore it is computationally easier to check the condition \eqref{enu:finite-path} or \eqref{enu:path-length} in our first main Theorem~\ref{thm:charact_convergence} stated below.
We furthermore note that determining the growth of $r\mapsto N_r(T)$ is essential to prove the quantitative estimates in our second main Theorem~\ref{thm:upper-bound-estim}.

\subsection{First main result: Characterization of the convergence and its consequences} 
\label{subsec:DynSyst_SpectralConv}

We now have all at hand to formulate our first main result.

\begin{theorem} \label{thm:charact_convergence}
Consider a primitive substitution on $\Aa^\Gamma$ with substitution map $S$. For a testing tuple $(T,N_T)$ and $\omega_0\in\Aa^\Gamma$, the following assertions are equivalent.
\begin{enumerate}[(i)]
	\item \label{enu:SpectrConv} For all Schrödinger operators $H$ with finite range, we have
$$
\lim\limits_{n\to\infty}\sigma(H_{S^n(\omega_0)})=\sigma(H_{\omega}),\qquad \omega\in\Omega(S),
$$
	\item \label{enu:conv} $\lim\limits_{n\to\infty}\delta_H\big(\overline{\Orb(S^n(\omega_0))},\Omega(S)\big)=0$.
	\item \label{enu:finite-path} Each directed path in $G_S(T,N_T)$, starting in a vertex of $W(\omega_0)_T\subseteq \Aa^T$, does not contain a closed subpath.
	\item \label{enu:path-length} Each directed path in $G_S(T,N_T)$, starting in a vertex of $W(\omega_0)_T\subseteq \Aa^T$, is of length strictly less than $\big\vert \Aa^T \big\vert$.
\end{enumerate}
In particular, if $W(\omega_0)_T\subseteq W(S)$, then these equivalent conditions are satisfied.
\end{theorem}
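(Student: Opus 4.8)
The plan is to prove the cycle (i) $\Rightarrow$ (iii) $\Rightarrow$ (ii) $\Rightarrow$ (i), supplemented by the purely combinatorial equivalence (iii) $\Leftrightarrow$ (iv). The latter is immediate from pigeonhole: the graph $G_S(T,N_T)$ has exactly $|\Aa^T|$ vertices, so a directed path repeating no vertex has length at most $|\Aa^T|-1$, whereas a path of length $\geq|\Aa^T|$ has at least $|\Aa^T|+1$ entries and must repeat a vertex, hence contains a closed subpath; conversely, a closed subpath may be traversed arbitrarily often, producing paths of unbounded length from the same initial vertex. Thus ``no closed subpath'' and ``length $<|\Aa^T|$'' coincide for the paths emanating from $W(\omega_0)_T$.

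The heart of the argument is (iii)/(iv) $\Rightarrow$ (ii). The key is a dictionary between directed paths in $G_S(T,N_T)$ and illegal $T$-patches produced by the substitution: using the restriction and equivariance conditions of Proposition~\ref{prop:SubstitutionMap} together with the fact (noted after Definition~\ref{def:legalPatches}) that $Q\prec S^{N_T}(P)$ with $P$ legal forces $Q$ legal, one checks that $S^{kN_T}(\omega_0)$ contains an illegal $T$-patch if and only if $G_S(T,N_T)$ admits a directed path of length $k$ starting in $W(\omega_0)_T$. Granting (iv), all such paths have length $<|\Aa^T|$, so with $N_0:=|\Aa^T|\,N_T$ the configuration $S^{N_0}(\omega_0)$ has only legal $T$-patches. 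Now property \eqref{enu:TestingDomain} of the testing tuple enters: for fixed $r$ and $m\geq N_r(T)$, every ball $xB(e,r)$ lies in some $V(m,\gamma T)=D_{\lambda_0}^m(\gamma)V(m,T)$, so the corresponding $B(e,r)$-patch of $S^{N_0+m}(\omega_0)=S^m(S^{N_0}(\omega_0))$ is a subpatch of $S^m$ applied to a legal $T$-patch, and is therefore legal. This is the content of Lemma~\ref{lem:Patches_event_Legal} and yields $W(S^n(\omega_0))_{B(e,r)}\subseteq W(S)_{B(e,r)}$ for $n$ large. For the reverse inclusion I would use primitivity: by Theorem~\ref{thm:Omega(S)} every legal $B(e,r)$-patch appears in $S^{L}(a)$ for a uniform $L=L(r)$ and any letter $a$, and since $S^n(\omega_0)$ contains $S^{n-L}(a)$ for letters $a$ occurring in $\omega_0$, all legal $B(e,r)$-patches eventually appear. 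Using $W(\overline{\Orb(\omega)})=W(\omega)$ and $W(\Omega(S))=W(S)$ (Theorem~\ref{thm:Omega(S)}), the two inclusions give $W(\overline{\Orb(S^n(\omega_0))})_{B(e,r)}=W(\Omega(S))_{B(e,r)}$ for $n$ large, and letting $r\to\infty$ in the dictionary formula \eqref{eq:HausdorffMetric_Dictionaries} yields (ii).

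For (ii) $\Rightarrow$ (i) I would invoke the continuity of the spectrum with respect to the Hausdorff metrics for amenable $\Gamma$ from \cite{BBdN18} (see also \cite{BBC19,BecTak21}); since $\Omega(S)$ is minimal by Theorem~\ref{thm:Omega(S)}, $\sigma(H_\omega)$ is constant on $\Omega(S)$, and $\delta_H$-convergence of $\overline{\Orb(S^n(\omega_0))}$ to $\Omega(S)$ forces $\sigma(H_{S^n(\omega_0)})\to\sigma(H_\omega)$ for every finite-range $H$. It remains to close the loop with (i) $\Rightarrow$ (iii), which I would prove contrapositively: if some path from $W(\omega_0)_T$ contains a closed subpath, then by the dictionary above $S^{kN_T}(\omega_0)$ contains a fixed illegal patch $P_0\notin W(S)$ for all $k$, whereas no $\omega\in\Omega(S)$ contains $P_0$. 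One then constructs a strongly pattern equivariant finite-range operator $H$ whose coefficients take a large value precisely on configurations exhibiting $P_0$, forcing a spectral value of $H_{S^{kN_T}(\omega_0)}$ that stays uniformly far from $\sigma(H_\omega)$; hence (i) fails. Finally, the ``in particular'' clause is immediate: if $W(\omega_0)_T\subseteq W(S)$ then every vertex of $W(\omega_0)_T$ is legal and thus isolated in $G_S(T,N_T)$, so the only paths starting there have length $0<|\Aa^T|$ and (iv) holds.

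The step I expect to be most delicate is (iii) $\Rightarrow$ (ii): establishing the precise path/illegal-patch dictionary requires careful bookkeeping with the supports $V(m,T)$ and the restriction condition, and the upgrade from $T$-patches to arbitrary $B(e,r)$-patches is exactly where property \eqref{enu:TestingDomain} of the testing tuple is indispensable. The operator construction underlying (i) $\Rightarrow$ (iii) is conceptually standard but must be carried out so that the bad spectral value is genuinely bounded away from $\sigma(H_\omega)$ uniformly in $k$.
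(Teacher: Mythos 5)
Your proposal is correct, and its combinatorial core coincides with the paper's: the pigeonhole equivalence of (iii) and (iv), the dictionary between directed paths in $G_S(T,N_T)$ and illegal $T$-patches of $S^{kN_T}(\omega_0)$ (this is Lemma~\ref{lem:prev-step}/Lemma~\ref{lem:paths-ileg} in the paper), the use of property~\eqref{enu:TestingDomain} of the testing tuple to upgrade legality of $T$-patches to legality of all $B(e,r)$-patches (Lemma~\ref{lem:Patches_event_Legal}), and primitivity for the reverse inclusion $W(S)_{B(e,r)}\subseteq W(S^n(\omega_0))$ (Lemma~\ref{lem:LegalPatches_Appear}), all combined through \eqref{eq:HausdorffMetric_Dictionaries}. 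The one place where you genuinely diverge is the spectral leg. The paper treats (\ref{enu:SpectrConv})$\Leftrightarrow$(\ref{enu:conv}) as a single black box, citing the two-way equivalence of spectral and dynamical convergence from \cite{BBdN18} (plus minimality to make $\sigma(H_\omega)$ constant on $\Omega(S)$), and then proves (\ref{enu:conv})$\Rightarrow$(\ref{enu:finite-path}) purely dynamically: a closed subpath forces a fixed illegal patch $P_i$ to recur in $S^{n_m}(\omega_0)$ along a subsequence $n_m\to\infty$, which already violates convergence in $(\Jj,\delta_H)$. You instead import only the easier direction (\ref{enu:conv})$\Rightarrow$(\ref{enu:SpectrConv}) from the literature and close the cycle with $\neg$(\ref{enu:finite-path})$\Rightarrow\neg$(\ref{enu:SpectrConv}) by exhibiting an explicit strongly pattern equivariant operator (a large locally constant potential supported on occurrences of the persistent illegal patch), which keeps a spectral value like $\Lambda$ in $\sigma(H_{S^{n_m}(\omega_0)})$ while $\sigma(H_\omega)=\{0\}$ on $\Omega(S)$. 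That construction is sound (it is exactly what the paper alludes to informally after Corollary~\ref{cor:TableTiling_Letter_BadAppr}) and buys a more self-contained argument that does not rely on the harder ``spectral convergence implies dynamical convergence'' half of \cite{BBdN18}; the paper's route is shorter and defers all operator-theoretic content to the reference. Both are valid, and your identification of (iii)$\Rightarrow$(ii) as the delicate step matches where the paper invests its technical lemmas.
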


We prove the theorem in Section~\ref{subsec:Pf_MainThm}.

\medskip

The equivalence of (\ref{enu:SpectrConv}) and (\ref{enu:conv}) is a consequence of \cite[Corollary~1]{BBdN18} using that every homogeneous substitution lattice is amenable. Our main contribution is the equivalence of (\ref{enu:conv}) to (\ref{enu:finite-path}) and (\ref{enu:path-length}). These conditions are of particular interest as they can be checked algorithmically as demonstrated on some examples in Sections~\ref{subsec:tableTiling} and~\ref{subsec:Heisenberg_Example}. Note further that one can consider a larger class of operators associated with the dynamical systems, see \cite{BBdN18}.

\medskip 

The assertions in Theorem~\ref{thm:charact_convergence} are particularly interesting in situations where one can numerically compute the spectrum explicitly for the approximations $H_{S^n(\omega_0)}$. This is for instance the case if $\Gamma=\Z^d$ and $S^n(\omega_0)$ is periodic (i.e. $\Orb(S^n(\omega_0))$ is finite) using the Floquet-Bloch theory.
We provide in Proposition~\ref{prop:Periodic_Heisenberg} an example of a primitive substitution on the Heisenberg group admitting periodic approximations. Together with the results on the bottom of the spectrum for periodic graphs given in \cite{Richter23}, our result might provide an approach to access the bottom of the spectrum in this case. This will be the subject of future investigations.

\medskip

This motivates the task of finding conditions for the existence of periodic approximations. In order to use Theorem~\ref{thm:charact_convergence}, we need the following.

\begin{proposition}
\label{prop:Periodic_Sn}
Consider a substitution on $\Aa^\Gamma$ with substitution map $S$. If $\omega_0\in\Aa^\Gamma$ is periodic, then $S^n(\omega_0)$ is periodic for all $n\in\N$. More precisely, if $M\subseteq \Gamma$ is finite satisfying $\Orb(\omega_0)=\{\eta\omega_0 \,|\, \eta\in M\}$, then
$$
\Orb(S^n(\omega_0))=\big\{ \eta S^n(\omega_0) \,|\, \eta \in (D^n[V]\cap\Gamma) \cdot D^n[M] \big\},
	\qquad n\in\N.
$$ 
\end{proposition}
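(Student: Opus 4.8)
The plan is to prove this by showing the claimed set equals $\Orb(S^n(\omega_0))$ via a double inclusion, using the equivariance property of the substitution map from Proposition~\ref{prop:SubstitutionMap}. The key observation is that equivariance, $S^n(\gamma P) = D_{\lambda_0}^n(\gamma) S^n(P)$, translates translations of $\omega_0$ by $\Gamma$ into translations of $S^n(\omega_0)$ by the dilated lattice $D^n[\Gamma]$, but the orbit of $S^n(\omega_0)$ under the full group $\Gamma$ is generically larger, since $D^n[\Gamma]$ is a proper subgroup. The extra factor $D^n[V]\cap\Gamma$ precisely accounts for the coset representatives of $D^n[\Gamma]$ inside $\Gamma$.

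\medskip

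First I would observe that periodicity means $\Orb(\omega_0)$ is finite, so there is a finite set $M\subseteq\Gamma$ with $\Orb(\omega_0)=\{\eta\omega_0 \mid \eta\in M\}$; equivalently the stabilizer $\Stab(\omega_0)=\{\gamma\in\Gamma \mid \gamma\omega_0=\omega_0\}$ is a finite-index subgroup of $\Gamma$, and $M$ may be taken as a transversal. To compute $\Orb(S^n(\omega_0))=\{\gamma S^n(\omega_0)\mid \gamma\in\Gamma\}$, I would decompose an arbitrary $\gamma\in\Gamma$ according to the uniform lattice structure: since $\Gamma = \bigsqcup_{\eta\in\Gamma}\eta V$ with $V$ a fundamental domain, and $D^n[\Gamma]<\Gamma$ is of finite index (co-compactness of the dilated lattice), every $\gamma\in\Gamma$ can be written as $\gamma = v\cdot D^n(\delta)$ for some $\delta\in\Gamma$ and some coset representative $v$ lying in $D^n[V]\cap\Gamma$. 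Applying the equivariance relation, $\gamma S^n(\omega_0) = v\, D^n(\delta) S^n(\omega_0) = v\, S^n(\delta\omega_0)$, and since $\delta\omega_0 = \eta\omega_0$ for the appropriate $\eta\in M$ lying in the same $\Stab(\omega_0)$-coset as $\delta$, one gets $\gamma S^n(\omega_0) = v\, S^n(\eta\omega_0) = v\, D^n(\eta) S^n(\omega_0)$. This exhibits every orbit element in the claimed form with the factor $(D^n[V]\cap\Gamma)\cdot D^n[M]$, giving one inclusion; the reverse inclusion is immediate since every element of the claimed set is manifestly a $\Gamma$-translate of $S^n(\omega_0)$.

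\medskip

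The main obstacle I anticipate is verifying cleanly that $D^n[V]\cap\Gamma$ serves as a set of coset representatives for $\Gamma / D^n[\Gamma]$, and more delicately, that the decomposition $\gamma = v\cdot D^n(\delta)$ can be arranged with $v\in D^n[V]\cap\Gamma$. Here one must use that $V$ is a fundamental domain for $\Gamma$ in $G$, hence $D^n[V]$ is a fundamental domain for $D^n[\Gamma]$ in $G$ (the dilations are group automorphisms, so they carry the tiling $G=\bigsqcup_{\eta\in\Gamma}\eta V$ to $G=\bigsqcup_{\eta\in\Gamma}D^n(\eta)D^n[V]$). Intersecting with $\Gamma$ and keeping track of which translates $D^n(\delta)D^n[V]$ meet $\Gamma$ requires a short argument that the lattice points of $\Gamma$ distribute one-per-coset among the tiles $D^n(\delta)D^n[V]$, so that $D^n[V]\cap\Gamma$ picks out exactly the representatives. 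Once this combinatorial bookkeeping of the double coset structure $D^n[V]\cap\Gamma$ together with $D^n[M]$ is settled, the rest follows mechanically from the equivariance condition, and finiteness of the resulting index set shows $S^n(\omega_0)$ is periodic.
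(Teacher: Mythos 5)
Your proposal is correct and follows essentially the same route as the paper: decompose an arbitrary $\gamma\in\Gamma$ using that a dilate of the fundamental domain tiles $G$ under $D^n[\Gamma]$, push the $D^n(\delta)$-factor through $S^n$ via the equivariance condition, and absorb $\delta$ into the finite set $M$ using the stabilizer of $\omega_0$, exactly as the paper does with its set $M_n=V_n\cdot D^n[M]$. The one caveat — which you inherit from the statement itself, since the paper's own proof quietly works with $V_n=D^n[V^{-1}]\cap\Gamma$ — is that left-coset representatives of $D^n[\Gamma]$ in $\Gamma$ naturally come from $D^n[V^{-1}]$ rather than $D^n[V]$ (the left-fundamental-domain property $G=\Gamma\cdot V$ yields $G=V^{-1}\cdot\Gamma$), but this does not affect finiteness of the index set nor the conclusion that $S^n(\omega_0)$ is periodic.
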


\begin{proof}
Recall that $V$ is the fundamental domain of a uniform lattice and $D:=D_{\lambda_0}$ is the dilation with the stretch factor $\lambda_0>1$.

Let $H:=\Stab(\omega_0):=\{\gamma\in\Gamma \,|\, \gamma\omega_0=\omega_0\}$ be the stabilizer of $\omega_0$, which is a subgroup of $\Gamma$. Since $\Orb(\omega_0)$ is finite, there is a finite set $M\subseteq\Gamma$ such that $\Orb(\omega_0)=\{\eta\omega_0 \,|\, \eta\in M\}$.

We first show that $M\cdot H=\Gamma$. Clearly, $M\cdot H\subseteq\Gamma$ and so let $\gamma\in\Gamma$. Then there is an $\eta\in M$ such that $\gamma\omega_0=\eta\omega_0$ or equivalently $\eta^{-1}\gamma\omega_0=\omega_0$. By definition of the stabilizer, we conclude $\eta^{-1}\gamma\in H$, namely $\gamma\in\eta H\subseteq M\cdot H$.

Next, we prove that $S^n(\omega_0)$ is periodic.  Let $n\in\N$. Set $V_n:=D^n[V^{-1}]\cap\Gamma$, which satisfies $V_n\cdot D^n[\Gamma]=\Gamma$, since $V^{-1}\cdot \Gamma=(\Gamma \cdot V)^{-1}=G$ and $D$ is an automorphism. Hence,
$$
\Gamma=V_n\cdot D^n[M\cdot H] = V_n\cdot D^n[M]\cdot D^n[H],
$$ 
follows using again that $D$ is an automorphism. Define $M_n:=V_n\cdot D^n[M]\subseteq \Gamma$, which is a finite set as $V_n$ and $M$ are finite. Then for all $\gamma\in\Gamma$, there is an $\eta\in M_n$ and a $\gamma'\in H$ such that $\gamma=\eta D^n(\gamma')$. Thus, Proposition~\ref{prop:SubstitutionMap} and $\gamma'\in H$ lead to
$$
\gamma S^n(\omega_0) 
	= \eta D^n(\gamma')S^n(\omega_0)
	= \eta S^n(\gamma'\omega_0)
	= \eta S^n(\omega_0).
$$
Hence, $\Orb(S^n(\omega_0))=\{\eta\omega_0 \,|\, \eta\in M_n\}$ and so $S^n(\omega_0)$ is periodic since $M_n$ is finite.
\end{proof}

\begin{cor}
\label{cor:PeriodicallyApprox}
Consider a primitive substitution on $\Aa^\Gamma$ with substitution map $S$. 
If $\omega_0\in\Aa^\Gamma$ is periodic and satisfies one of the equivalent conditions in Theorem~\ref{thm:charact_convergence}, then $\Omega(S)$ is periodically approximable.
\end{cor}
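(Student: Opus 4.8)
The plan is to combine the two preceding results directly: Proposition~\ref{prop:Periodic_Sn} guarantees that periodicity is preserved under the substitution map, and Theorem~\ref{thm:charact_convergence} converts the assumed graph-theoretic condition into convergence of the orbit closures in $(\Jj,\delta_H)$. Recall that, by definition, $\Omega(S)$ is \emph{periodically approximable} if there is a sequence of periodic configurations $\omega_n\in\Aa^\Gamma$ with $\lim_{n\to\infty}\delta_H\big(\overline{\Orb(\omega_n)},\Omega(S)\big)=0$. My candidate sequence is simply $\omega_n:=S^n(\omega_0)$.

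First I would verify the periodicity of each term of the sequence. Since $\omega_0$ is periodic by hypothesis, Proposition~\ref{prop:Periodic_Sn} applies and shows that $S^n(\omega_0)$ is periodic for all $n\in\N$; concretely, its orbit is finite with $\Orb(S^n(\omega_0))=\{\eta S^n(\omega_0)\,|\,\eta\in M_n\}$ for the finite set $M_n=\big(D^n[V]\cap\Gamma\big)\cdot D^n[M]$. Thus each $\omega_n=S^n(\omega_0)$ is a legitimate periodic configuration, so the sequence $(\omega_n)$ lies in the admissible class for periodic approximation.

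Next I would invoke the convergence. By assumption, $\omega_0$ satisfies one of the equivalent conditions in Theorem~\ref{thm:charact_convergence}; in particular condition~(\ref{enu:conv}) holds, namely
\[
\lim_{n\to\infty}\delta_H\big(\overline{\Orb(S^n(\omega_0))},\Omega(S)\big)=0.
\]
This is exactly the convergence requirement in the definition of periodic approximability, now realized by the periodic sequence $\omega_n=S^n(\omega_0)$ produced in the previous step. Hence $\Omega(S)$ is periodically approximable.

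I do not expect any genuine obstacle here, as the corollary is a clean conjunction of the two cited results; the only point requiring a moment's care is the bookkeeping that the \emph{same} sequence $S^n(\omega_0)$ simultaneously inherits periodicity from Proposition~\ref{prop:Periodic_Sn} and supplies the $\delta_H$-convergence from Theorem~\ref{thm:charact_convergence}~(\ref{enu:conv}). Once both facts are attached to this single sequence, the definition of periodic approximability is met verbatim.
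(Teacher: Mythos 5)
Your proposal is correct and follows essentially the same route as the paper: apply Proposition~\ref{prop:Periodic_Sn} to get periodicity of each $S^n(\omega_0)$ and Theorem~\ref{thm:charact_convergence}~(ii) for the $\delta_H$-convergence. The only cosmetic difference is that the paper explicitly notes $\overline{\Orb(S^n(\omega_0))}=\Orb(S^n(\omega_0))$ because finite orbits are closed, reconciling the closure in Theorem~\ref{thm:charact_convergence}~(ii) with the definition of periodic approximability, which is stated without the closure.
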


\begin{proof}
By Theorem~\ref{thm:charact_convergence}, $\lim_{n\to\infty} \overline{\Orb(S^n(\omega_0))} =\Omega(S)$ holds. Since $\omega_0$ is periodic, we conclude that $S^n(\omega_0)$ is also periodic, see Proposition~\ref{prop:Periodic_Sn}. In particular, $\overline{\Orb(S^n(\omega_0))}=\Orb(S^n(\omega_0))$ holds since finite sets are closed.
\end{proof}

According to \cite[Proposition~2.11]{BBdN20}, every subshift associated with a primitive substitution on $\Aa^\Z$ is periodically approximable. Moreover, one can conclude from Theorem~\ref{thm:charact_convergence} that so-called self-correcting substitutions, see \cite[Definition 2.7]{GM13}, are periodically approximable. However, in general primitive substitutions do not give rise to periodically approximable subshifts in higher dimensions. 
This is  a direct consequence of Theorem~\ref{thm:charact_convergence} and a result from \cite{DuLeSh05,Oll08,Ballier23} see a summary in \cite{JeVa20}. In order to do so, we call a subshift $\Omega \in \Jj$ {\em strongly aperiodic} if $\Stab(\omega):=\{ \gamma\in\Gamma\,|\, \gamma\omega=\omega \}=\{ e\}$ for every $\omega\in \Omega$. Note that substitution subshifts are a typical class used to generate strongly aperiodic subshifts, see e.g. \cite{Sol98,Queff10,BaakeGrimm13,BHP21-Symbolic}.

\begin{cor}
\label{cor:PrimSubst_NotPerAppr}
There exists a strongly aperiodic, minimal subshift $\Omega(S)$ associated with a primitive substitution on $\Aa^{\Z^2}$ such that $\Omega(S)$ is not periodically approximable.
\end{cor}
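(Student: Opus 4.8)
The plan is to combine the Hausdorff-metric description~\eqref{eq:HausdorffMetric_Dictionaries} of $\delta_H$ with the deep fact, supplied by the domino-problem literature \cite{DuLeSh05,Oll08,Ballier23,JeVa20}, that there is a \emph{primitive} substitution on $\Aa^{\Z^2}$ whose subshift $\Omega(S)$ is simultaneously \emph{strongly aperiodic} and a \emph{subshift of finite type}. Concretely, I would fix such an $S$ for which legality is decided locally at some finite scale $r_0>0$, i.e.
\begin{equation*}
\Omega(S)=\big\{\omega\in\Aa^{\Z^2}\,\big|\, W(\omega)_{B(e,r_0)}\subseteq W(S)_{B(e,r_0)}\big\}.
\end{equation*}
Minimality of $\Omega(S)$ is then immediate from primitivity via Theorem~\ref{thm:Omega(S)}, and strong aperiodicity ($\Stab(\omega)=\{e\}$ for all $\omega\in\Omega(S)$) is part of the cited construction. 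Thus only the failure of periodic approximability remains to be shown.

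Next I would argue by contradiction. Suppose $\Omega(S)$ is periodically approximable, so there are periodic $\omega_n\in\Aa^{\Z^2}$ with $\delta_H\big(\overline{\Orb(\omega_n)},\Omega(S)\big)\to 0$. Since $\overline{\Orb(\omega_n)}$ is finite with $W(\overline{\Orb(\omega_n)})=W(\omega_n)$, the dictionary formula~\eqref{eq:HausdorffMetric_Dictionaries} yields an $N$ such that for all $n\ge N$ the dictionaries of $\omega_n$ and $\Omega(S)$ agree on the window $B(e,r_0)$ (agreement at any larger radius restricts to radius $r_0$). In particular $W(\omega_n)_{B(e,r_0)}\subseteq W(S)_{B(e,r_0)}$, i.e.\@ every $B(e,r_0)$-window of $\omega_n$ is a legal pattern. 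The SFT property then upgrades this local legality to global legality, $\omega_n\in\Omega(S)$. But $\omega_n$ is periodic, so $\Stab(\omega_n)\neq\{e\}$, contradicting strong aperiodicity; hence no periodic approximating sequence exists.

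The conceptual link to Theorem~\ref{thm:charact_convergence} is that its testing domains play the role of a local-recognizability window: ``legal on the testing domain'' is exactly the hypothesis one feeds into the characterization, and the SFT realization is what makes this window bounded by a finite $r_0$, so that Hausdorff convergence of the subshifts can detect global membership in $\Omega(S)$. I would make this explicit but keep the short contradiction above as the actual engine, powered by~\eqref{eq:HausdorffMetric_Dictionaries} rather than by the sequence $S^n(\omega_0)$ appearing in the theorem (the approximants $\omega_n$ are arbitrary periodic configurations, not necessarily substitution iterates).

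The main obstacle is not this argument but the \emph{existence} of the required substitution: one needs a single object that is at once primitive (hence minimal and linearly repetitive by Theorem~\ref{thm:Omega(S)}), strongly aperiodic, and an SFT at a finite window $r_0$. This is a genuinely higher-dimensional phenomenon and is precisely where the external results are indispensable. In one dimension every primitive substitution subshift is periodically approximable (\cite[Proposition~2.11]{BBdN20}); the Fibonacci/Sturmian subshift is the instructive contrast, being minimal and strongly aperiodic yet periodically approximable exactly because it is \emph{not} locally recognizable at any finite scale, so the implication ``locally legal $\Rightarrow$ globally legal'' fails. I would therefore devote the bulk of the write-up to citing, and if necessary verifying, that the chosen two-dimensional substitution does realize a strongly aperiodic SFT at some finite $r_0$, after which the conclusion follows as above.
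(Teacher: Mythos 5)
Your argument is correct and is essentially the paper's own proof: both invoke the results of \cite{DuLeSh05,Oll08,Ballier23,JeVa20} to produce a primitive substitution whose subshift is strongly aperiodic and coincides with a subshift of finite type, and both then derive a contradiction by noting that any periodic configuration whose local windows are all legal would, by the SFT property, lie in $\Omega(S)$ and violate strong aperiodicity. The only cosmetic difference is that the paper phrases the local window as the tileset support $T=\{0,1\}^2$ and routes the ``periodically approximable $\Rightarrow$ locally legal periodic point'' step through Theorem~\ref{thm:charact_convergence}, whereas you extract it directly from equation~\eqref{eq:HausdorffMetric_Dictionaries}, which is equally valid.
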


\begin{proof}
We need to introduce some notations used in \cite{JeVa20}. 
A subset $\tau\subseteq \Aa^T$ with $T=\{0,1\}^2$ is called a {\em tileset}. A tileset $\tau$ is {\em intrinsically substitutive (with factor $2$)}, if there is a substitution on $\Aa^{\Z^2}$ with substitution rule $S_0:\Aa\to\Aa^T$ and substitution map $S$ satisfying
\begin{itemize}
\item $\Omega(S)=\Omega(\tau):=\{\omega\in\Aa^{\Z^2} \,|\, W(\omega)_T\subseteq \tau\}$,
\item for $P\in\Aa^M$ with $M\subseteq \Z^2$ finite, we have $S(P)\in W(S)$ if and only if $P\in W(S)$,
\item for all $\omega\in\Omega(\tau)$, there is a $\rho\in\Aa^{\Z^2}$ and a $\gamma\in\Z^2$ such that $\omega=\gamma S(\rho)$.
\end{itemize}
Note that $\Omega(\tau)$ is also called a subshift of finite type in the literature. Moreover, this substitution is actually a block substitution falling into our framework, see Section~\ref{subsec:Block_Substitutions}.

According to \cite{DuLeSh05,Oll08} (see also \cite[Proposition~8]{JeVa20}), there exists an intrinsically substitutive tileset $\tau$ with substitution map $S$ such that $\Omega(S)$ is strongly aperiodic. Due to \cite[Lemme~1.33]{Ballier23} this substitution is primitive. It is left to prove that $\Omega(S)$ is not periodically approximable. This can be done using (only) the first property of an intrinsically substitutive tilset together with its existence. In fact, by equation~\eqref{eq:HausdorffMetric_Dictionaries} and Theorem~\ref{thm:charact_convergence}, $\Omega(S)$ is periodically approximable if and only if there is a periodic $\omega_0\in\Aa^{\Z^2}$ such that $W(\omega_0)_T\subseteq W(S)_T\subseteq\tau$. By definition of $\Omega(\tau)=\Omega(S)$, we conclude $\omega_0\in\Omega(S)$. This contradicts that $\Omega(S)$ is strongly aperiodic. Summing up, $\Omega(S)$ is minimal, strongly aperiodic and not periodically approximable.
\end{proof}

Corollary~\ref{cor:PeriodicallyApprox} provides a sufficient condition for subshifts that are periodically approximable generalizing the results in \cite[Proposition~2.11]{BBdN20} and \cite[Theorem~6.2.3]{BeckusThesis}.
With the previous theorem at hand, we provide a  verifiable condition for primitive substitution systems in general to admit such periodic approximations. 
Moreover, Theorem~\ref{thm:charact_convergence} extends the result \cite[Theorem~6.2.3]{BeckusThesis} significantly.

Corollary~\ref{cor:PrimSubst_NotPerAppr} shows that there are substitutions on $\Aa^{\Z^2}$ that are not periodically approximable. Based on this, the question arises if such a behavior also occurs for non-abelian substitution subshifts. Specifically, are there primitive substitutions over a non-abelian group that are not periodically approximable?

\subsection{Second main result: Exponential rate of convergence for the spectra}
\label{subsec:MainResults}

The exponential convergence of the spectra for the table tiling substitution (Proposition~\ref{prop:Table_PeriodicAppr}), can be witnessed in large geometric generality. This is the core of the second main result of this paper, concerning the qualitative behavior of the convergence rate of $\sigma(H_{S^n(\omega_0)})$ to the spectrum $\sigma(H_\rho)$ for $\rho\in\Omega(S)$.

\medskip

These estimates are obtained by estimating the rate of the convergence of the subshifts $\overline{\Orb(S^n(\omega_0))}$ to $\Omega(S)$ in $(\Jj,\delta_H)$. This leads to the spectral estimates using the recent works \cite{BBC19,BecTak21}. The exponential decay obtained is given by the associated stretch factor $\lambda_0>1$.

\begin{theorem} 
\label{thm:upper-bound-estim}
Consider a primitive substitution on $\Aa^\Gamma$ with substitution map $S$.
Then there exist a $C>0$ and an $M_1 \geq 0$ such that if $\omega_0\in\Aa^\Gamma$ satisfies one of the equivalent conditions in Theorem~\ref{thm:charact_convergence}, then 
\[ 
\delta_H\big( \overline{\Orb(S^n(\omega_0))}, \Omega(S) \big)
	\leq  \frac{C}{\lambda_0^n},
	\qquad n\geq M_1.
\]
\end{theorem}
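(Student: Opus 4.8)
The plan is to translate the Hausdorff distance $\delta_H\big(\overline{\Orb(S^n(\omega_0))},\Omega(S)\big)$ into a comparison of dictionaries via equation~\eqref{eq:HausdorffMetric_Dictionaries}. Since $W\big(\overline{\Orb(\omega)}\big)=W(\omega)$ for any $\omega$, and since $W(\Omega(S))=W(S)$ by primitivity (Theorem~\ref{thm:Omega(S)}), it suffices to exhibit a radius $r=r(n)$ with $r(n)\gtrsim \lambda_0^n$ for which
\[
W\big(S^n(\omega_0)\big)_{B(e,r)}=W(S)_{B(e,r)} .
\]
Then $\delta_H\big(\overline{\Orb(S^n(\omega_0))},\Omega(S)\big)\leq \tfrac{1}{r+1}\leq \tfrac{C}{\lambda_0^n}$, as desired. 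I would establish the equality of dictionaries by proving the two inclusions separately.

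For the inclusion $W(S^n(\omega_0))_{B(e,r)}\subseteq W(S)_{B(e,r)}$ (i.e.\ all $B(e,r)$-patches of $S^n(\omega_0)$ are legal), I would first use the graph hypothesis. By the testing-tuple property~\eqref{enu:N_T} together with the equivariance and restriction identities of Proposition~\ref{prop:SubstitutionMap}, every illegal $T$-patch of $S^{kN_T}(\omega_0)$ gives rise to a directed path of length $k$ in $G_S(T;N_T)$ issuing from $W(\omega_0)_T$; hence, by condition~\eqref{enu:path-length} of Theorem~\ref{thm:charact_convergence}, the configuration $\omega_1:=S^{|\Aa^T|\,N_T}(\omega_0)$ has all its $T$-patches legal, so $W(\omega_1)_T\subseteq W(S)$. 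Next I would upgrade ``all $T$-patches legal'' to ``all $B(e,r)$-patches legal'' under further substitution: by the testing-domain property~\eqref{enu:TestingDomain}, for $N\geq N_r(T)$ and every $x\in\Gamma$ there is a $\gamma$ with $xB(e,r)\subseteq D_{\lambda_0}^N(\gamma)V(N,T)=V(N,\gamma T)$; the restriction condition of Proposition~\ref{prop:SubstitutionMap} then realizes $(S^N(\omega_1))\vert_{xB(e,r)}$ as a subpatch of $S^N(\omega_1\vert_{\gamma T})$, which is legal because $\omega_1\vert_{\gamma T}$ is legal and $S$ sends legal patches to legal patches. Thus $S^{|\Aa^T|\,N_T+N}(\omega_0)$ has all $B(e,r)$-patches legal as soon as $N\geq N_r(T)$.

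For the reverse inclusion $W(S)_{B(e,r)}\subseteq W(S^n(\omega_0))_{B(e,r)}$ (every legal $B(e,r)$-patch already occurs in $S^n(\omega_0)$), I would invoke linear repetitivity. Choosing any letter $a$ occurring in $\omega_0$, the legal patch $S^n(a)$ occurs in $S^n(\omega_0)$, and by the stretch-factor condition $D_{\lambda_0}^n[B(z,C_-)]\subseteq V(s+n)$ of Lemma~\ref{lem:SuffLarge_General} its support $V(n)\cap\Gamma$ contains a ball of radius $\gtrsim \lambda_0^n$. Since $\Omega(S)$ is linearly repetitive with constant $\CLR$ (Theorem~\ref{thm:Omega(S)}), any legal patch supported on a ball of radius $\geq \CLR\, r$ contains \emph{every} legal $B(e,r)$-patch as a subpatch; hence for $r\lesssim \lambda_0^n/\CLR$ all legal $B(e,r)$-patches appear in $S^n(\omega_0)$.

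Combining the two inclusions, equality of the dictionaries holds for $r$ up to $\sim\lambda_0^n$ provided $N_r(T)\leq n-|\Aa^T|\,N_T$, and this determines $M_1$ and $C$. The main obstacle is the \emph{quantitative} control of the threshold $r\mapsto N_r(T)$ in property~\eqref{enu:TestingDomain}: one must show $N_r(T)\leq \log_{\lambda_0}r + O(1)$, equivalently that the dilated testing domains $V(N,\gamma T)$ contain balls of inradius $\gtrsim \lambda_0^N$ uniformly around every point of $\Gamma$, so that inverting yields an admissible $r\gtrsim \lambda_0^{\,n-|\Aa^T|N_T}\gtrsim \lambda_0^n$. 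This logarithmic control, resting on the ``sufficiently large relative to $V$'' hypothesis on $\lambda_0$, is precisely what converts the qualitative convergence of Theorem~\ref{thm:charact_convergence} into an exponential rate.
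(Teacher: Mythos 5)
Your proposal is correct and follows essentially the same route as the paper: translate $\delta_H$ into equality of dictionaries via equation~\eqref{eq:HausdorffMetric_Dictionaries}, use the graph condition to get $W(S^{|\Aa^T|N_T}(\omega_0))_T\subseteq W(S)$ (the paper's Lemma~\ref{lem:paths-ileg}), propagate legality to $B(e,r)$-patches via the testing domain (Lemma~\ref{lem:Patches_event_Legal}), and obtain the reverse inclusion from linear repetitivity plus the inradius bound on $V(n)$ (Lemma~\ref{lem:LegalPatches_Appear_Quantit}). The one step you flag as the remaining obstacle --- the bound $N_r(T)\leq \log_{\lambda_0}r+O(1)$ --- is exactly what the paper supplies in Proposition~\ref{prop:TestingTuple_Existence}~(b) for the canonical testing domain $T=V(s_1+s_2)\cap\Gamma$, using precisely the ``sufficiently large relative to $V$'' hypothesis you identify.
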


The constants $C$ and $M_1$ can be estimated more explicitly, see Proposition~\ref{prop:ConvergenceRate} below. It is also shown there that for specific initial configurations $\omega_0$ these constants can be significantly reduced, which is desirable for practical purposes, see e.g. Corollary~\ref{cor:RateConv_Zd}. We prove the theorem in Section~\ref{subsec:Rate_Converg}.

\begin{cor}
\label{cor:SpectralEstimates}
Consider a primitive substitution on $\Aa^\Gamma$ with substitution map $S$.
If $H$ is a strongly pattern equivariant Schrödinger operator with finite range, then there exist a $C'>0$ and an $N_0\in\N$ such that for all $\omega_0\in\Aa^\Gamma$ satisfying one of the equivalent conditions in Theorem~\ref{thm:charact_convergence}, we have
$$
d_H\big( \sigma(H_{S^n(\omega_0)}), \sigma(H_{\omega}) \big) \leq \frac{C'}{\lambda_0^n} ,\qquad \omega\in\Omega(S),\; n\geq N_0.
$$
\end{cor}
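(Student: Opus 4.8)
The plan is to derive the spectral estimate in Corollary~\ref{cor:SpectralEstimates} directly from the dynamical convergence rate in Theorem~\ref{thm:upper-bound-estim} by invoking the Lipschitz-type dependence of spectra on the underlying subshift. The key observation is that the excerpt announces (in the paragraph preceding Theorem~\ref{thm:upper-bound-estim}) that the passage from convergence of subshifts in $(\Jj,\delta_H)$ to convergence of spectra in $(\Kk(\R),d_H)$ is governed by the recent works \cite{BBC19,BecTak21}. These results provide, for a fixed strongly pattern equivariant Schr\"odinger operator $H$ with finite range acting over an amenable group $\Gamma$, a constant $L>0$ (a H\"older or Lipschitz constant depending only on $H$) such that $d_H\big(\Sigma(\Omega_1),\Sigma(\Omega_2)\big)\leq L\cdot \delta_H(\Omega_1,\Omega_2)^\alpha$ for subshifts $\Omega_1,\Omega_2\in\Jj$, where $\Sigma(\Omega):=\bigcup_{\omega\in\Omega}\sigma(H_\omega)$ and $\alpha\in(0,1]$. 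Since $\Gamma$ is a homogeneous substitution lattice, it is amenable (as recorded after the growth statement in Section~\ref{subsec:Substitutions_general}), so these results apply.

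First I would fix a strongly pattern equivariant Schr\"odinger operator $H$ with finite range and record the relevant continuity estimate from \cite{BBC19,BecTak21}, obtaining the constant $L$ and exponent $\alpha$. Because $H$ is strongly pattern equivariant, its coefficients $t_\eta$ are locally constant, so the spectrum $\sigma(H_\omega)$ depends only on $\omega$ through patches of a fixed finite radius; this is precisely what makes the dependence on the subshift Lipschitz (i.e.\ one may take $\alpha=1$) rather than merely continuous. Next I would combine this with Theorem~\ref{thm:upper-bound-estim}: for $\omega_0$ satisfying one of the equivalent conditions of Theorem~\ref{thm:charact_convergence}, we have $\delta_H\big(\overline{\Orb(S^n(\omega_0))},\Omega(S)\big)\leq C/\lambda_0^n$ for $n\geq M_1$. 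Applying the continuity estimate with $\Omega_1=\overline{\Orb(S^n(\omega_0))}$ and $\Omega_2=\Omega(S)$ yields
\[
d_H\big(\Sigma(\overline{\Orb(S^n(\omega_0))}),\Sigma(\Omega(S))\big)\leq L\Big(\frac{C}{\lambda_0^n}\Big)^{\alpha}\leq \frac{L\,C^{\alpha}}{\lambda_0^{\alpha n}}.
\]

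It then remains to identify these spectra with the ones in the statement. Since the substitution is primitive, $\Omega(S)$ is minimal (Theorem~\ref{thm:Omega(S)}), and for a minimal subshift all operators $H_\omega$ with $\omega\in\Omega(S)$ share the same spectrum; hence $\Sigma(\Omega(S))=\sigma(H_\omega)$ for every $\omega\in\Omega(S)$. Likewise, since $S^n(\omega_0)$ lies in its own orbit closure and spectra are constant along orbits (as $H_{\gamma\omega}$ is unitarily equivalent to $H_\omega$ via the translation unitary), one has $\sigma(H_{S^n(\omega_0)})\subseteq\Sigma(\overline{\Orb(S^n(\omega_0))})$, and the Hausdorff distance of the full union-spectrum to $\sigma(H_\omega)$ dominates the distance of the single spectrum $\sigma(H_{S^n(\omega_0)})$ to $\sigma(H_\omega)$ up to absorbing constants. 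Setting $C'=L\,C^{\alpha}$ (after adjusting to account for the exponent $\alpha$ and replacing $\lambda_0$ by $\lambda_0^{\alpha}$ inside the statement, or absorbing $\alpha$ into $C'$ over the range $n\geq N_0$) and $N_0=\max\{M_1,1\}$ completes the argument.

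The main obstacle I anticipate is the precise bookkeeping at the interface with \cite{BBC19,BecTak21}: one must verify that the continuity of the spectrum map is genuinely Lipschitz (exponent $\alpha=1$) for strongly pattern equivariant operators, so that the exponential rate $\lambda_0^{-n}$ survives unchanged rather than being weakened to $\lambda_0^{-\alpha n}$. If those references only furnish H\"older continuity with $\alpha<1$, the conclusion would still hold with $\lambda_0$ replaced by $\lambda_0^{\alpha}>1$; but to match the corollary verbatim one needs the Lipschitz estimate, which should indeed be available precisely because local constancy of the coefficients lets the spectrum be read off from finitely many patches. A secondary, purely technical point is to ensure that the union-spectrum $\Sigma(\Omega)$ agrees with the individual spectrum for the relevant minimal subshift and that no spurious spectrum of $H_{S^n(\omega_0)}$ escapes the bound; both follow from minimality of $\Omega(S)$ and translation invariance of the spectrum, so I expect them to be routine.
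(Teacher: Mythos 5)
Your proposal is correct and follows essentially the same route as the paper: apply Theorem~\ref{thm:upper-bound-estim} for the rate $C/\lambda_0^n$ in $(\Jj,\delta_H)$, use minimality of $\Omega(S)$ (Theorem~\ref{thm:Omega(S)}) to identify $\sigma(H_\omega)$ for all $\omega\in\Omega(S)$, and invoke the spectral continuity results of \cite{BBC19,BecTak21}. The issue you flag about the H\"older exponent is resolved exactly as you anticipate: the paper cites \cite[Theorem~1.3~(b)]{BecTak21}, which gives the Lipschitz estimate ($\alpha=1$) precisely because the coefficients are locally constant, whereas merely Lipschitz coefficients would only yield exponent $\tfrac12$ (cf.\ the remark following the corollary).
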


\begin{proof}
Since $S$ is primitive, $\Omega(S)$ is minimal (Theorem~\ref{thm:Omega(S)}) and so $\sigma(H_{\omega})=\sigma(H_{\rho})$ holds for all $\omega,\rho\in\Omega(S)$, see e.g.\@ \cite[Proposition~1.2.2]{Le99} in the abelian and \cite[Theorem~3.6.8]{BeckusThesis} in the non-abelian case. 
Now the estimate follows directly from Theorem~\ref{thm:upper-bound-estim} together with \cite[Theorem~1.3~(b)]{BecTak21} (see also \cite{BBC19} in the case $\Gamma=\Z^d$) using that all the coefficients are locally constant. Note that we can apply \cite{BecTak21} as $\Gamma$ has exact polynomial growth in our setting, see \cite[Proposition~3.36]{BHP21-Symbolic}.
\end{proof}

\begin{remark}
We note that the spectral estimate holds for a larger class of operators even with infinite range, see \cite{BBC19,BecTak21} for more details. Moreover, if the coefficients $t_\eta:\Aa^\Gamma\to\R, \eta\in B,$ of the Schrödinger operator $H$ are Lipschitz continuous (but $H$ is not necessarily strongly pattern equivariant), then 
$$
d_H\big( \sigma(H_{S^n(\omega_0)}), \sigma(H_{\omega}) \big) \leq \frac{C'}{\sqrt{\lambda_0^n}} ,\qquad \omega\in\Omega(S),\; n\geq N_0,
$$
follows for a suitable constant $C'>0$ and $N_0\in\N$, see \cite[Theorem~1.3~(a)]{BecTak21}.
\end{remark}

\section{Applications of the theory}
\label{sec:Block_and_Heisenberg}

In this section we provide explicit dilation data and substitution data to apply our main results. We give one class of abelian substitutions -- block substitutions, and another non-abelian example, namely a substitution over the discrete Heisenberg group.

\subsection{Block substitutions}
\label{subsec:Block_Substitutions}

We study a special class of substitutions in the abelian setting $\Gamma=\Z^d$ and prove that they are contained in the general class of substitutions introduced before. In particular, the table tiling substitution presented in Section~\ref{subsec:tableTiling} is an interesting example of block substitutions. The following two statements show that the theory developed in Section~\ref{sec:BeyondAbelian} is applicable for block substitutions and henceforth also for the table tiling substitution.

Let $\Aa$ be an alphabet. A {\em block substitution on $\Aa^{\Z^d}$} is defined by a vector $\mv=(m_1,\ldots,m_d)\in\N^d$ with $m_j>1$ for all $1\leq j\leq d$ and a {\em substitution rule}
$$
S_0:\Aa\to\Aa^{\Kmv} 
	\quad\textrm{where}\quad \Kmv:=\Z^d \cap \prod_{j=1}^d \big[-\frac{m_j}{2},\frac{m_j}{2}\big).
$$
It is standard to extend a substitution rule to a map $S:\PatstarZd\to\PatstarZd$ by acting letter wise, see e.g. \cite[Chapter 5.1]{Queff10} or \cite[Chapter 4]{BaakeGrimm13}. For instance, this is sketched in Figure~\ref{fig:TableTiling_Iterations} for the table tiling substitution. Thanks to the following proposition, block substitutions are a special case of the substitutions introduced in the previous section.

\begin{proposition}
\label{prop:blockSubst_DilationDatum} 
Consider a block substitution with alphabet $\Aa$, vector $\mv=(m_1,\ldots,m_d)\in\N^d$ with $m_j>1$ for all $1\leq j\leq d$ and substitution rule $S_0:\Aa\to\Aa^{\Kmv}$. Then the associated substitution map $S$ conincides with the substitution map arising from
\begin{itemize}
\item the dilation datum $\Dd:=\big(\R^d,d_{\mv},(D_\lambda)_{\lambda>0},\Z^d,V\big)$ with $V=\big[-\frac{1}{2},\frac{1}{2}\big)^d$, $\lambda_0=\min\{m_j\,|\, 1\leq j\leq d\}>1$, metric 
	$$
	d_{\mv}(x,y):= \max_{1\leq j \leq d} |x_j-y_j|^{\frac{1}{\alpha_j}}, 
		\qquad 
		x,y\in\R^d,
		\alpha_j:=\frac{\log(m_j)}{\log(\lambda_0)}\geq 1 \;\textrm{ for }\; 1\leq j\leq d,
	$$
	and the dilation family defined by
	$$
	D_\lambda(x):= \big(\lambda^{\alpha_1}x_1,\ldots, \lambda^{\alpha_d}x_d\big) 
		\qquad \textrm{for }\quad x=(x_1,\ldots,x_d)\in\R^d;
	$$
\item the substitution datum $\Ss=(\Aa,\lambda_0,S_0)$ with  $\Kmv = D_{\lambda_0}[V]\cap\Z^d$.
\end{itemize}
In particular, we have $D_{\lambda_0}(x)= (m_1x_1,\ldots,m_d x_d)$ for $x\in\R^d$ and $\lambda_0$ is sufficiently large relative to $V$ with respect to the constant 
$C_-:= 2\lambda_0-3$ and $s=4$.
\end{proposition}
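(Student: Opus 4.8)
The plan is to verify that the listed geometric and combinatorial data satisfy the axioms of Definition~\ref{def:substitution_general}, to confirm the explicit constants via Lemma~\ref{lem:SuffLarge_General}, and finally to identify the resulting abstract substitution map with the classical letterwise block substitution using the uniqueness in Proposition~\ref{prop:SubstitutionMap}.

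First I would check that $\Dd=(\R^d,d_{\mv},(D_\lambda)_{\lambda>0},\Z^d,V)$ is a dilation datum. For (D1), the triangle inequality for $d_{\mv}$ follows from the subadditivity of $t\mapsto t^{1/\alpha_j}$ on $[0,\infty)$, which holds since $\alpha_j\geq 1$ (because $m_j\geq\lambda_0=\min_k m_k$); left-invariance is inherited from translation invariance of $|\cdot|$, and properness holds because the $d_{\mv}$-balls are Euclidean boxes, so closed balls are compact and $d_{\mv}$ induces the standard topology. For (D2), a direct computation gives $D_\lambda\circ D_\mu=D_{\lambda\mu}$ and, from $|\lambda^{\alpha_j}(x_j-y_j)|^{1/\alpha_j}=\lambda\,|x_j-y_j|^{1/\alpha_j}$, the scaling identity $d_{\mv}(D_\lambda x,D_\lambda y)=\lambda\,d_{\mv}(x,y)$. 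For (D3), $\Z^d$ is a uniform lattice in $\R^d$ and $V=[-\tfrac12,\tfrac12)^d$ is a Borel, relatively compact fundamental domain with $0$ in its interior. Since $\lambda_0^{\alpha_j}=m_j$, one gets $D_{\lambda_0}(x)=(m_1x_1,\dots,m_dx_d)$, whence $D_{\lambda_0}[\Z^d]\subseteq\Z^d$ and $\Kmv=D_{\lambda_0}[V]\cap\Z^d$, matching the asserted substitution datum.

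The quantitative core is the claim that $\lambda_0$ is sufficiently large relative to $V$ with $C_-=2\lambda_0-3$ and $s=4$. I would first record $B(e,\tfrac12)\subseteq V\subseteq\overline V\subseteq B(e,1)$, so \eqref{eq:radii_r+_and_r-} holds with $r_-=\tfrac12$ and $r_+=1$ (a point of $B(e,\tfrac12)$ has $|x_j|<(\tfrac12)^{\alpha_j}\leq\tfrac12$, while a point of $\overline V$ has $|x_j|^{1/\alpha_j}\leq(\tfrac12)^{1/\alpha_j}<1$). As $\lambda_0$ may be as small as $2$, Lemma~\ref{lem:SuffLarge_General}(a) need not apply, so I would use part~(b) with $r=2\lambda_0$ and $s=4$; this is admissible since $2\lambda_0>\tfrac{\lambda_0}{\lambda_0-1}=\tfrac{r_+\lambda_0}{\lambda_0-1}$ for $\lambda_0\geq 2$, and it returns precisely $C_-=(2\lambda_0-r_+)-\tfrac{2\lambda_0}{\lambda_0}=2\lambda_0-3$. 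It then remains to produce $z\in\Z^d$ with $B(z,2\lambda_0)\subseteq V(4)$. Writing $V(n)=\prod_j I_n^{(j)}$ (a box, by an easy induction) with $I_n^{(j)}=m_j\big((I_{n-1}^{(j)}\cap\Z)+[-\tfrac12,\tfrac12)\big)$ from \eqref{eq:recursive_V(n)}, the observation that a half-open interval of integer length $\ell$ contains exactly $\ell$ consecutive integers gives $|I_n^{(j)}|=m_j^n$, hence $|I_4^{(j)}|=m_j^4$. The ball $B(z,2\lambda_0)$ is the box $\prod_j\big(z_j-(2\lambda_0)^{\alpha_j},\,z_j+(2\lambda_0)^{\alpha_j}\big)$, and the bound $(2\lambda_0)^{\alpha_j}=2^{\alpha_j}m_j\leq m_j^2$ (valid since $2\leq\lambda_0$ gives $2^{\alpha_j}\leq\lambda_0^{\alpha_j}=m_j$) shows each factor has half-length at most $m_j^2$. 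As $m_j^4-2m_j^2\geq 1$ for $m_j\geq 2$, the set of admissible centres in coordinate $j$ is a closed interval of length at least $1$, so it contains an integer $z_j$; the vector $z=(z_1,\dots,z_d)$ then yields the required inclusion.

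The main obstacle is exactly this last estimate. Because $V$ is half-open, the iterated supports $V(n)$ are \emph{not} the naive rescalings $D_{\lambda_0}^n[V]$: the coordinates with even $m_j$ accumulate a parity-dependent drift, so one cannot simply centre the ball at the origin. What rescues the argument is that only the coordinate lengths $m_j^n$ enter, and these are drift-independent, so together with the freedom to choose $z$ the problem reduces to the clean length bound above. Finally, for the coincidence of the maps I would apply the uniqueness in Proposition~\ref{prop:SubstitutionMap}: the classical block substitution $\widetilde S$, which places the block $S_0(P(\gamma))$ over $D_{\lambda_0}(\gamma)+\Kmv$ (these blocks tile $\Z^d$ since $\Z^d=D_{\lambda_0}[\Z^d]\oplus\Kmv$), agrees with $S_0$ on single letters and satisfies the equivariance and restriction conditions; since these conditions determine the abstract substitution map uniquely, $\widetilde S=S$, and the support bookkeeping $V(n,M)\cap\Z^d$ matches the block positions by construction.
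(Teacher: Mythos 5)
Your proof is correct and follows essentially the same route as the paper: verifying the dilation-datum axioms directly, invoking Lemma~\ref{lem:SuffLarge_General}(b) with $r=2\lambda_0$, $s=4$, $r_+=1$, computing $V(4)$ as a box with side lengths $m_j^4$, choosing an integer centre $z$, and using the bound $(2\lambda_0)^{\alpha_j}\le m_j^2$ to conclude $C_-=2\lambda_0-3$. The only difference is that you spell out the identification of the abstract substitution map with the classical letterwise one via the uniqueness clause of Proposition~\ref{prop:SubstitutionMap}, a point the paper's proof leaves implicit.
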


\begin{proof}
Clearly, $d_{\mv}$ is a proper left-invariant metric inducing the Euclidean topology on $\R^d$ and
$$
d_{\mv}(D_\lambda(x),D_\lambda(y)) = \max_{1\leq j \leq d} \big| \lambda_0^{\alpha_j} x_j - \lambda_0^{\alpha_j} y_j \big|^{\frac{1}{\alpha_j}}
	= \lambda_0 d_{\mv}(x,y).
$$
It is straightforward to check that $D_\lambda:\R^d\to\R^d$ defines a group automorphism. Finally, the choice of $\alpha_j$ implies $ \log\big(\lambda_0^{\alpha_j}\big) = \log(m_j) $. We conclude that $\lambda_0^{\alpha_j}=m_j$ proving $D_{\lambda_0}(x)= (m_1x_1,\ldots,m_d x_d)$ for $x\in\R^d$ and $D_{\lambda_0}[V]\cap\Z^d=\Kmv$. 
Thus, $\Dd$ defines a dilation datum and $\Ss$ a substitution datum if we prove that  $\lambda_0$ is sufficiently large relative to $V$ with respect to $C_-:= 2\lambda_0-3$ and $s=4$.

In order to do so, we apply Lemma~\ref{lem:SuffLarge_General}~(b) for $s=4$. 
Therefore set $r_+=1$ satisfying $\overline{V}\subseteq B(e,r_+)$.
A short computation gives $V(4)=\prod_{j=1}^d [a_j,b_j)$ with $b_j-a_j=m_j^4$ for each $1\leq j\leq d$. Let $z_j\in\Z$ be chosen such that it attains the minimum $\min_{k\in\Z}\big|k-\frac{a_j + b_j}{2}\big|$, which is less or equal than $\frac{1}{2}$. Set $z=(z_1,\ldots,z_d)\in\Z^d$ and observe
$$
\prod_{j=1}^d (z_j-R_j,z_j+R_j) \subseteq \prod_{j=1}^d [a_j,b_j) = V(4) 
\qquad \textrm{where } 
R_j := \frac{m_j^4-1}{2}.
$$
Choose $r:=2\lambda_0>\frac{r_+\lambda_0}{\lambda_0-1}$. 
We show that
$$
B(z,r) 
= \big\{ \gamma\in\Z^d \,\big|\, d_{\mv}(\gamma,z)<r\big\}
= \big\{ \gamma\in\Z^d \,\big|\, |\gamma_j-z_j|<r^{\alpha_j}\big\}
\subseteq \prod_{j=1}^d (z_j-R_j,z_j+R_j)
\subseteq V(4).
$$
This is indeed the case since
$$
\log\big(2^{\alpha_j}\big) = \frac{\log(m_j)\log(2)}{\log(\lambda_0)} \leq \log(m_j)
\qquad\textrm{and}\qquad
\lambda_0^{\alpha_j}=m_j,
$$
lead to
$$
r^{\alpha_j} \leq m_j^2 \leq \frac{m_j^4-1}{2}=R_j.
$$
Thus, Lemma~\ref{lem:SuffLarge_General}~(b) implies that $\lambda_0$ is sufficiently large relative to $V$ with respect to $z\in\Z^d$, $s=4$ and $C_-:= r-r_+ - \frac{r}{\lambda_0}= 2\lambda_0-3$.
\end{proof}

Next, we prove that for the previously defined dilation datum $\Dd$ and stretch factor $\lambda_0$ for block substitutions, the tuple $(T,1)$ with $T=\{0,1\}^d$ is a testing tuple.

\begin{proposition}
\label{prop:test-euclid}
Let $\mv=(m_1,\ldots,m_d)\in\N^d$ with $m_j>1$ and consider the associated dilation datum $\Dd:=\big(\R^d,d_{\mv},(D_\lambda)_{\lambda>0},\Z^d,\big[-\frac{1}{2},\frac{1}{2}\big)^d\big)$ with associated stretch factor $\lambda_0:=\min\{m_j\,|\, 1\leq j\leq d\}>1$ found in Proposition~\ref{prop:blockSubst_DilationDatum}. Then $(T,1)$ with $T=\{ 0,1\}^d$ is a testing tuple and for each $r\geq 1$, $N_r(T):= \frac{\log(2\lambda_0r)}{\log(\lambda_0)}$ satisfies Definition~\ref{def:TestingTuple}~\eqref{enu:TestingDomain}.
\end{proposition}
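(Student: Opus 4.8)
The plan is to determine the supports $V(n,T)$ explicitly as axis-parallel boxes and then to verify the two conditions of Definition~\ref{def:TestingTuple} by an elementary covering argument. I would first show, by induction on $n\in\N_0$ using the recursion~\eqref{eq:recursive_V(n)}, that $V(n,T)$ is a box whose side length in the $j$-th coordinate equals $2m_j^n$. The base case is $V(0,T)=T\cdot V=[-\tfrac12,\tfrac32)^d$. For the inductive step one uses that $D_{\lambda_0}(x)=(m_1x_1,\dots,m_dx_d)$ by Proposition~\ref{prop:blockSubst_DilationDatum}, together with the observation that a half-open interval whose length $2m_j^{n-1}$ is a positive integer contains exactly $2m_j^{n-1}$ integers; hence forming $(V(n-1,T)\cap\Gamma)\cdot V$ reproduces a box of unchanged side lengths (possibly shifted endpoints), and applying $D_{\lambda_0}$ multiplies the $j$-th side by $m_j$. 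In particular the translates $D_{\lambda_0}^n(\gamma)V(n,T)=D_{\lambda_0}^n(\gamma)+V(n,T)$, $\gamma\in\Z^d$, are boxes of side $2m_j^n$ whose left endpoints in the $j$-th coordinate form an arithmetic progression of spacing $m_j^n$.

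For condition~(a) I would use the metric $d_{\mv}(x,y)=\max_j|x_j-y_j|^{1/\alpha_j}$ with $\alpha_j=\log(m_j)/\log(\lambda_0)$ from Proposition~\ref{prop:blockSubst_DilationDatum}: by left-invariance, $xB(e,r)\subseteq\{p\in G:d_{\mv}(p,x)<r\}=\prod_{j=1}^d(x_j-r^{\alpha_j},x_j+r^{\alpha_j})$, a box of half-side $r^{\alpha_j}$ centred at $x$. Thus it suffices, in each coordinate, to place an interval of length $2r^{\alpha_j}$ inside one of the length-$2m_j^n$ translated intervals from the box formula above; since the left endpoints of those intervals form a progression of spacing $m_j^n$, this is possible whenever $m_j^n\geq 2r^{\alpha_j}$. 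I would then check that $n\geq N_r(T)=\log(2\lambda_0 r)/\log(\lambda_0)$ yields $\lambda_0^n\geq 2\lambda_0 r$, and hence $m_j^n=(\lambda_0^n)^{\alpha_j}\geq(2\lambda_0 r)^{\alpha_j}\geq 2r^{\alpha_j}$ for every $j$, using $\lambda_0^{\alpha_j}=m_j$, $\alpha_j\geq 1$ and $\lambda_0>1$. Choosing the corresponding $\gamma_j$ coordinatewise furnishes $\gamma=\gamma(n,x)$ as required.

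For condition~(b) I would take $m=1$, where the box formula gives $V(1,T)=\prod_{j=1}^d[-\tfrac{m_j}{2},\tfrac{3m_j}{2})$, of side $2m_j\geq 4$. Given $x\in\Z^d$, I would pick $(\gamma_x)_j$ so that the multiple $m_j(\gamma_x)_j$ lies in $(x_j+1-\tfrac{3m_j}{2},\,x_j+\tfrac{m_j}{2}]$; this interval has length $2m_j-1\geq m_j$ and therefore contains a multiple of $m_j$, and that choice places both $x_j$ and $x_j+1$ inside the $j$-th side of $D_{\lambda_0}(\gamma_x)+V(1,T)$. Hence $xT\subseteq D_{\lambda_0}(\gamma_x)V(1,T)$, so $m=1$ satisfies the defining condition; as $N_T$ is required to lie in $\N=\{1,2,\dots\}$, this gives $N_T=1$.

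The step I expect to be the main obstacle is the geometry of $V(n,T)$: the clean box formula hinges on the fact that each side length $2m_j^{n-1}$ is a positive integer, so that the operations $\cap\,\Gamma$ and $\cdot\,V$ leave the side lengths unchanged even though they may shift the endpoints. Once this bookkeeping is settled, the covering estimates for both conditions reduce to routine one-dimensional interval computations.
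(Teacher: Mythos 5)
Your proof is correct and follows essentially the same route as the paper: an induction on the recursion \eqref{eq:recursive_V(n)} to control the geometry of $V(n,T)$ as an axis-parallel box, followed by a coordinatewise interval-fitting argument for both conditions of Definition~\ref{def:TestingTuple}. The only (harmless) differences are that you establish the exact box formula for $V(n,T)$ with side lengths $2m_j^n$, whereas the paper only proves the one-sided inclusion $\prod_{j=1}^d[-m_j^{n-1},m_j^n]\subseteq V(n,T)$, and that you verify condition~\eqref{enu:N_T} for every $x\in\Z^2$ explicitly rather than just checking $T\subseteq V(1,T)$.
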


\begin{proof}
We use the notation $D:=D_{\lambda_0}$ and $V:=\big[-\frac{1}{2},\frac{1}{2}\big)^d$. 

\underline{Step 1:} We first claim that for all $n\in\N$,
$$
\prod_{j=1}^d \big[-m_j^{n-1},m_j^n\big] \subseteq V(n,T).
$$
For the induction base, observe
$$
V(1,T) 
	= D\big[ T+V\big] 
	= D\left[ \left[ -\frac{1}{2},\frac{3}{2}\right)^d\right]
	= \prod_{j=1}^d \left[ -\frac{m_j}{2},\frac{3}{2}m_j\right)
	\supseteq \prod_{j=1}^d [-1,m_j],
$$
where $m_j\geq 2$ is used in the last step. For the induction step, the recursive definition of $V(n,T)$ in equation~\eqref{eq:recursive_V(n)} together with the induction hypothesis yields
\begin{align*}
V(n+1,T) 
	= &D\big[ \big(V(n,T)\cap\Z^d\big)+V\big] 
	\supseteq D\left[\left(\prod_{j=1}^d \big[-m_j^{n-1},m_j^n\big] \cap\Z^d\right)+V\right]\\
	= &D\left[\prod_{j=1}^d \left[-m_j^{n-1}-\frac{1}{2},m_j^n+\frac{1}{2}\right)\right]
	= \prod_{j=1}^d \left[-m_j^{n}-\frac{m_j}{2},m_j^{n+1}+\frac{m_j}{2}\right)\\
	\supseteq &\prod_{j=1}^d \big[-m_j^{n},m_j^{n+1}\big].
\end{align*}

\underline{Step 2:} Let $x\in\Z^d$, $r\geq 1$ and $n\geq N_r(T)=\frac{\log(2\lambda_0r)}{\log(\lambda_0)}$. For $\gamma=(\gamma_1,\ldots,\gamma_d)\in\Z^d$, Proposition~\ref{prop:blockSubst_DilationDatum} implies $D^n(\gamma) = (m_1^n\gamma_1,\ldots,m_d^n\gamma_d)$. Thus, there is a $\gamma\in\Z^d$ such that for any $y=(y_1, \ldots , y_d)\in \mathbb{Z}^d$, 
$$
	\max_{1\leq j\leq d}|x_j-y_j|\leq \frac{m_j^{n-1}}{2}
\qquad\Longrightarrow\qquad 
y\in D^n(\gamma) + \prod_{j=1}^d \big[-m_j^{n-1},m_j^{n}\big].
$$
Let $y\in\Z^d$ with $d_{\mv}(x,y)\leq r$, i.e., $y$ is an element of the ball $B_{\mv}(x,r)$ defined by the metric $d_{\mv}$. Since $n\geq N_r(T)$, we have $r\leq\frac{\lambda_0^{n-1}}{2}$. Hence, 
$$
|x_j-y_j|
	\leq r^{\alpha_j} 
	= \frac{(\lambda_0^{n-1})^{\alpha_j}}{2^{\alpha_j}}
	\leq \frac{(\lambda_0^{\alpha_j})^{n-1}}{2}
	= \frac{m_j^{n-1}}{2}
$$
follows for each $1\leq j\leq d$ by definition of the metric $d_{\mv}$. Thus, the inclusions 
$$
B_{\mv}(x,r)\subseteq D^n(\gamma) + \prod_{j=1}^d \big[-m_j^{n-1},m_j^{n}\big]
	\subseteq  D^n(\gamma) + V(n,T)
$$
follow by invoking Step 1. In particular, $T$ is a testing domain and for $r\geq 1$, $N_r(T)=\frac{\log(2\lambda_0r)}{\log(\lambda_0)}$ satisfies Definition~\ref{def:TestingTuple}~\eqref{enu:TestingDomain} by the previous considerations.

Using the induction base in Step 1 and $m_j>1$ for $1\leq j\leq d$, we conclude that $T\subseteq \prod_{j=1}^d [-1,m_j] \subseteq V(1,T)$. Thus, $N_T=1$ is the smallest integer satisfying Definition~\ref{def:TestingTuple}~\eqref{enu:N_T} proving that $(T,1)$ is a testing tuple.
\end{proof}

\subsection{An example in the Heisenberg group} 
\label{subsec:Heisenberg_Example}

Let $G:=\HeiR=\{(x,y,z)\,|\, x,y,z\in\R\}$ be the $3$-dimensional Heisenberg group with group multiplication defined by
\begin{equation*}
	(x,y,z)\cdot (a,b,c):= \Big( x+a,y+b, z+c + \frac{1}{2}(xb-ay)  \Big),\qquad (x,y,z),(a,b,c)\in \HeiR.
\end{equation*}
The Cygan-Kor{\'a}nyi norm on $\HeiR$ is defined by 
\begin{equation} \label{eq:Heisenberg_CyganKoranyiNorm}
\|\cdot\|_{CK}:\HeiR\to[0,\infty),\qquad  	 \|(x,y,z)\|_{CK} := \sqrt[4]{\big( x^2 + y^2 \big)^2 \, +\, z^2}.
\end{equation} 
This norm induces a left-invariant metric $d$ on $\HeiR$ via $d(g,h):= \|g^{-1}h\|_{CK}$ for $g,h\in G$. 
An underlying dilation family on $\HeiR$ is given by $(D_\lambda)_{\lambda>0}$ where
$$
D_\lambda:\HeiR\to\HeiR,\quad D_\lambda(x,y,z):=(\lambda x , \lambda y, \lambda^2 z ),
$$
The set $\Gamma:=\HeiZ$ of all vectors $(x,y,z)\in \HeiR$ with $x,y,z\in2\Z$ defines a uniform lattice in $\HeiR$ equipped with the invariant metric $d_\Gamma:=d_G|_{\Gamma\times\Gamma}$. Then $V:=[-1,1)^3$ defines a fundamental domain for $\Gamma$ in $\HeiR$. Clearly, we have $D_\lambda[\Gamma]\subseteq \Gamma$ if and only if $\lambda\in\N$. Altogether $\Dd_{H}=\big( \HeiR, d, (D_\lambda)_{\lambda>0}, \HeiZ, [-1,1)^3 \big)$ is a dilation datum.

\begin{figure}[htb] 
		\begin{center}
			\includegraphics[scale=1.1]{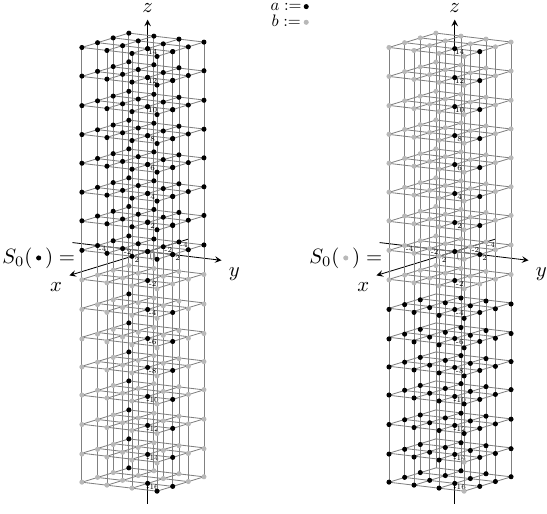}
		\end{center}
		\caption{A substitution rule where black bullet represents the letter $a$ and a gray bullet represents the letter $b$.}
		\label{fig:HeisenbergSubstitution}
	\end{figure}

\begin{proposition}
\label{prop:Periodic_Heisenberg}
Consider the dilation datum $\Dd_{H}=\big( \HeiR, d, (D_\lambda)_{\lambda>0}, \HeiZ, [-1,1)^3 \big)$ and the substitution datum $\Ss_{H}=(\Aa,\lambda_0,S_0)$ with stretch factor $\lambda_0=4$, alphabet $\Aa=\{a,b\}$ and substitution rule $S_0$ defined in Figure~\ref{fig:HeisenbergSubstitution}. Let $\omega_a,\omega_b\in\Aa^{\HeiZ}$ be defined by $\omega_a(\gamma)=a$ and $\omega_b(\gamma)=b$ for all $\gamma\in\HeiZ$. Then the following assertions hold.
\begin{enumerate}[(a)]
\item The subshift $\Omega(S)$ is strongly aperiodic (i.e.,\@ every point in $\Omega(S)$ has trivial $\Gamma$-stabilizer) and linearly repetitive. 
\item The set $T:=V(1)\cap\Gamma$ is a testing domain for $\Dd_H$ and $\lambda_0=4$.
\item The subshift $\Omega(S)$ is periodically approximable. In particular, there is a constant $C>0$ and $M_1\geq 0$ such that for $c\in\Aa$, we have the estimate
$$
\delta_H\big( \Orb\big(S^n(\omega_c)\big), \Omega(S)\big) 
	\leq C \frac{1}{4^n},
	\qquad n\geq M_1.
$$
\end{enumerate}
\end{proposition}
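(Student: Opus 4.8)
The plan is to prove the three assertions of Proposition~\ref{prop:Periodic_Heisenberg} by combining the verification of the substitution's abstract properties with the machinery already developed in the excerpt. For part~(a), I would first verify that the substitution rule $S_0$ from Figure~\ref{fig:HeisenbergSubstitution} is \emph{primitive} in the sense of Definition~\ref{def:primitive}: since the alphabet $\Aa=\{a,b\}$ has only two letters, it suffices to exhibit an $L\in\N$ such that both $a$ and $b$ occur in each of $S^L(a)$ and $S^L(b)$. This is a finite combinatorial check on the substitution picture. Once primitivity holds, Theorem~\ref{thm:Omega(S)} immediately gives minimality and linear repetitivity of $\Omega(S)$. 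Strong aperiodicity, i.e.\ triviality of all stabilizers $\Stab(\omega)=\{e\}$, is the genuinely substitution-specific part and would require a separate argument: one shows that no nontrivial $\gamma\in\HeiZ$ can fix a configuration in $\Omega(S)$, typically by a recognizability/hierarchical-structure argument exploiting that the dilation $D_{\lambda_0}$ with $\lambda_0=4$ forces any period to be compatible with the supersitution hierarchy at every level, which is impossible for $\gamma\neq e$.

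For part~(b), the task is to verify that $T:=V(1)\cap\Gamma$ is a testing \emph{domain}, meaning condition~\eqref{enu:TestingDomain} of Definition~\ref{def:TestingTuple}. The natural route, mirroring the Euclidean computation in Proposition~\ref{prop:test-euclid}, is a two-step argument: first establish by induction on $n$ that the iterated supports $V(n,T)$ contain explicit balls or boxes that grow like $D_{\lambda_0}^n$ of a fixed region; then, given $r>0$ and $x\in\Gamma$, use that $\lambda_0=4$ is sufficiently large relative to $V$ (which must be checked via Lemma~\ref{lem:SuffLarge_General}, identifying suitable $r_-,r_+$ for $V=[-1,1)^3$ under the Cygan--Kor\'anyi metric) to find $n\geq N_r(T)$ and $\gamma\in\Gamma$ with $xB(e,r)\subseteq D_{\lambda_0}^n(\gamma)V(n,T)$. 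The left-invariance of $d$ and the scaling identity $d(D_\lambda g,D_\lambda h)=\lambda\,d(g,h)$ are the key geometric inputs; the existence result Proposition~\ref{prop:TestingTuple_Existence} (referenced but not displayed) guarantees a testing tuple exists, but here one must confirm that \emph{this particular} $T$ works. This step is largely geometric bookkeeping in the Heisenberg metric rather than conceptually hard.

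For part~(c), the strategy is to produce a periodic $\omega_0\in\Aa^\Gamma$ satisfying the equivalent conditions of Theorem~\ref{thm:charact_convergence} and then invoke Corollary~\ref{cor:PeriodicallyApprox} together with Theorem~\ref{thm:upper-bound-estim}. The cleanest choice is to check the sufficient condition $W(\omega_c)_T\subseteq W(S)$ for at least one constant configuration $\omega_c$, $c\in\{a,b\}$; since $\omega_c$ is trivially periodic (its orbit is a single point), Proposition~\ref{prop:Periodic_Sn} ensures $S^n(\omega_c)$ stays periodic, and Theorem~\ref{thm:charact_convergence} then yields $\overline{\Orb(S^n(\omega_c))}\to\Omega(S)$, so $\Omega(S)$ is periodically approximable. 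The exponential estimate $\delta_H\big(\Orb(S^n(\omega_c)),\Omega(S)\big)\leq C\,4^{-n}$ then follows directly from Theorem~\ref{thm:upper-bound-estim} with $\lambda_0=4$, giving the claimed $C>0$ and $M_1\geq 0$.

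I expect the main obstacle to be part~(a)'s strong aperiodicity claim combined with verifying the condition in part~(c) for the \emph{constant} configurations. The table tiling example in Corollary~\ref{cor:TableTiling_Letter_BadAppr} shows that constant (single-letter) initial configurations can \emph{fail} to approximate — there the letter patches sit on closed paths in the substitution graph. So the delicate point is confirming that for the Heisenberg substitution of Figure~\ref{fig:HeisenbergSubstitution}, at least one constant patch $p_c\in\Aa^T$ \emph{is} $S$-legal (equivalently lies only on dead-end paths in $G_S(T;N_T)$), which requires reading off the legal $T$-patches from the substitution rule and is not automatic. If no constant configuration works, one would instead need to construct a suitable non-constant periodic $\omega_0$ with $W(\omega_0)_T\subseteq W(S)$, analogous to $\omega_{rb},\omega_{gy}$ in the table tiling case, which is where the genuine case-specific effort lies.
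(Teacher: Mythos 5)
Your architecture for parts (b) and (c) matches the paper's, but two substitution-specific verifications are left unexecuted, and one of them is a genuine gap. For part (a), you obtain linear repetitivity from primitivity via Theorem~\ref{thm:Omega(S)}, which is fine (primitivity holds with $L=1$ since both letters occur in $S_0(a)$ and in $S_0(b)$), but strong aperiodicity is only gestured at (``a recognizability/hierarchical-structure argument''). That is not a proof, and carrying out such an argument from scratch on $\HeiZ$ would be a substantial piece of work. The paper avoids it entirely: it verifies that $\Ss_H$ is a \emph{good substitution datum} in the sense of \cite[Definition~6.4]{BHP21-Symbolic} by exhibiting explicit sets $\Xi_a,\Xi_o\subseteq D_{\lambda_0}[V]\cap\Gamma$ on which $S_0(a)$ and $S_0(b)$ agree, together with a seed position, and then cites \cite[Proposition~6.6, Theorems~1.4, 1.6]{BHP21-Symbolic} to get both strong aperiodicity and linear repetitivity at once. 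Without either that verification or a completed recognizability argument, part (a) is not established.

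For part (b) your induction mirroring Proposition~\ref{prop:test-euclid} would work, but it is unnecessary: after computing $r_-=1$, $r_+=\tfrac32$ for $V=[-1,1)^3$ in the Cygan--Kor\'anyi metric and applying Lemma~\ref{lem:SuffLarge_General}~(a) to get $C_-=\tfrac38$, $s=0$, one observes $r_+=\lambda_0 C_-$ and applies Proposition~\ref{prop:TestingTuple_Existence}~(b) with $s_1=0$, $s_2=1$, which produces \emph{exactly} $T=V(1)\cap\Gamma$ as a testing domain --- so the particular $T$ in the statement is what the general existence result outputs, not something extra to confirm. For part (c) you correctly identify the delicate point (constant initial configurations can fail, as for the table tiling), but you stop short of the verification. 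The paper resolves it by first reducing the testing domain via the computer-assisted Proposition~\ref{prop:Heisen-test-dom} to a $28$-element set $T'=\{-2,0\}^2\times\{-6,\dots,6\}$, and then exhibiting explicit translates $(0,0,6)T'$ and $(-2,-2,6)T'$ contained in $D_{\lambda_0}[V]\cap\Gamma$ on which $S_0(a)$, respectively $S_0(b)$, is constant, so that both constant $T'$-patches are $S$-legal. Over the full $T=V(1)\cap\Gamma$ (which has $256$ elements) this check would be far more laborious; your plan gives no route to completing it.
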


The proof of part~(c) makes use of a computer algorithm which for a given testing domain finds a proper subset that is also a testing domain, cf.\@ Proposition~\ref{prop:Heisen-test-dom}. In this sense, the proof of part~(c) is computer-assisted.  

\begin{proof}
A short computation leads to $$
B(e,r_-)\subseteq V \subseteq \overline{V} \subseteq B(e,r_+),\qquad 
	\textrm{with }\quad r_-=1 \quad\textrm{ and }\quad r_+=\frac{3}{2}.
$$
Thus, Lemma~\ref{lem:SuffLarge_General}~(a) implies that if $\lambda_0\in\N$ satisfies $\lambda_0\geq 1+\frac{r_+}{r_-}=\frac{5}{2}$, then $\lambda_0$ is sufficiently large relative to $V$ with respect to $s=0$, $z=e$ and the constant
$$
C_-=\frac{r_-}{\lambda_0} \left( \lambda_0-\big(1+\frac{r_+}{r_-}\big) \right) 
	= \frac{2\lambda_0-5}{2\lambda_0}
	=\frac{3}{8}.
$$
Observe that $r_+ =\lambda_0 C_-$. Now choose $\delta > 0$ small enough such that $V B(e,\delta) \subseteq B(e,r_{+}) = B(e,C_{-} \lambda_0)$. 
Then Proposition~\ref{prop:TestingTuple_Existence}~(b) applied with this $\delta > 0$, $s_1=0$ and $s_2=1$ implies that
$$
T':=V(1)\cap\Gamma = D_{\lambda_0}[V]\cap\Gamma = \{-4,-2,0,2\}^2\times\{-16,-14,-12,\ldots,14\}
$$
is a testing domain proving (b).

(a) The previously defined substitution datum is a good substitution datum in the sense of \cite[Definition~6.4]{BHP21-Symbolic}. Keeping the notation of the last work, 
define
\begin{align*}
\Xi_a := &\big\{ (0,0,z) \,\big|\, z\in[-16,16)\cap 2\Z \big\}\bigcup \big\{ (0,2,z) \,\big|\, z\in [-16,16)\cap 2\Z \big\} \setminus  \{ (0,2,-2) \},\\
\Xi_o := &\big\{(x,y,-2) \,\big|\, x,y\in[-4,4)\cap 2\Z \big\}\setminus \{ (0,0,-2) \}, \\
&(\gamma_2,x_b):=(2,2,-14), \qquad (\gamma_2,x_2):=(2,2,-16).
\end{align*}
With these choices, the conditions in \cite[Definition~6.4]{BHP21-Symbolic} are satisfied. Indeed, for $c\in\Aa$, we have $S_0(c)(\gamma_2,x_2)=c$, $S_0(c)(\gamma)= b$ for all $\gamma\in \Xi_o\cup\{(\gamma_2,x_b)\}$ and  $S_0(c)(\gamma)= a$ for all $\gamma\in \Xi_a$.  
Hence, \cite[Proposition~6.6, Theorem~1.4, Theorem~1.6]{BHP21-Symbolic} asserts that $\Omega(S)$ is strongly aperiodic and linearly repetitive. 

(c) We first show that $\lim_{n\to\infty} \Orb\big(S^n(\omega_c))=\Omega(S)$ for $c\in\Aa$. Towards this, a smaller testing domain is helpful. Proposition~\ref{prop:Heisen-test-dom} asserts that
$$
T':=\{-2,0\}^2\times\{-6,-4,-2,0,2,4,6\}
$$
is a testing domain for $\Dd_H$ and $\lambda_0=4$. Note that here, we used a computer to reduce the testing domain. 
Then $W(\omega_a)_{T'}=\{P\}$ and $W(\omega_b)_{T'}=\{Q\}$ follows where the patches $P,Q\in\Aa^{T'}$ are defined by $P(\gamma):=a$ and $Q(\gamma):=b$ for all $\gamma\in {T'}$.
To prove convergence of the dynamical systems, it suffices by Theorem~\ref{thm:charact_convergence} to show $W(\omega_a)_{T'}\subseteq W(S)$ and $W(\omega_b)_{T'}\subseteq W(S)$, namely that $P,Q\in\Aa^{T'}$ are $S$-legal. Direct computations imply
$$
(0,0,6){T'} = \{-2,0\}^2\times \{0,2,4,\ldots,10,12\} =:K_0
$$
and
$$
(-2,-2,6){T'}\subseteq \{-4,-2\}^2\times \{-2,0,\ldots,12,14\}=:K_1.
$$
From the substitution rule $S_0$ (Figure~\ref{fig:HeisenbergSubstitution}), we conclude
$$
P\prec S_0(a)|_{K_0} \qquad\textrm{and}\qquad Q\prec S_0(b)|_{K_1},
$$
proving that $P,Q\in W(S)$, namely these patches are $S$-legal. Now statement (c) follows from Theorem~\ref{thm:upper-bound-estim}.
\end{proof}

\section{Proof of the first main result} 
\label{sec:Cond-sect}

In this section, we prove Theorem~\ref{thm:charact_convergence}. Throughout this section, we fix a substitution on $\Aa^\Gamma$ with substitution map $S$. More precisely, let $\Dd=\big( G,d, (D_\lambda)_{\lambda>0}, \Gamma, V \big)$ be the associated dilation datum and $\Ss=(\Aa,\lambda_0,S_0)$ be the associated substitution datum. We use the notation $D:=D_{\lambda_0}:\Gamma\to\Gamma$.

\subsection{Sufficient condition}
We start by generalizing an unpublished result on block substitutions on $\Z^d$ by one of the authors \cite[Theorem~6.2.3]{BeckusThesis} providing a sufficient condition for the convergence of the subshifts. The special case $d=1$ was treated in \cite[Corollary~5.5]{BBdN20}.

\begin{lemma} 
\label{lem:Patches_event_Legal}
Let $(T,N_T)$ be a testing tuple of a substitution on $\Aa^\Gamma$ with substitution map $S$. For $r>0$, let $N_r(T)\geq 0$ be such that it satisfies Definition~\ref{def:TestingTuple}~\eqref{enu:TestingDomain}. Then for each $r > 0$ and for all $\omega_0 \in \Aa^{\Gamma}$ with $W(\omega_0)_T\subseteq W(S) $, we have
\[  
W\big(S^n(\omega_0) \big)_{B(e,r)} \subseteq W(S) 
	\quad \text{for all} \quad n\geq N_r(T). 
\]
\end{lemma}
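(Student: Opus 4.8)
The plan is to fix $r>0$ and $n\geq N_r(T)$, take an arbitrary patch $Q\in W\big(S^n(\omega_0)\big)_{B(e,r)}$, and show directly that $Q$ is $S$-legal. Since $Q$ is a finite subpatch of $S^n(\omega_0)$ with support $B(e,r)$, there is a position $x\in\Gamma$ with $S^n(\omega_0)(x\eta)=Q(\eta)$ for all $\eta\in B(e,r)$, so the occurrence of $Q$ sits on the block $x B(e,r)$. The whole purpose of the testing-tuple hypothesis is to relocate this occurrence inside the substituted image of a \emph{single} $T$-patch: because $n\geq N_r(T)$, Definition~\ref{def:TestingTuple}~\eqref{enu:TestingDomain} provides a $\gamma\in\Gamma$ with $x B(e,r)\subseteq D^n(\gamma)\,V(n,T)$.

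\textbf{Identifying $S^n(\omega_0)$ locally with a substituted legal patch.} Next I would read off the $T$-patch of $\omega_0$ at $\gamma$, namely $P\in\Aa^T$ with $P(\eta):=\omega_0(\gamma\eta)$, so that $\gamma P=\omega_0|_{\gamma T}$ and $P\in W(\omega_0)_T$. By hypothesis $W(\omega_0)_T\subseteq W(S)$, hence $P$ is $S$-legal. Using the restriction condition of Proposition~\ref{prop:SubstitutionMap} with $M=\gamma T$, together with $V(n,\gamma T)=D^n(\gamma)V(n,T)$ from Proposition~\ref{prop:SubstitutionMap}~\eqref{enu:support_Sn_Equivariant} and the equivariance $S^n(\gamma P)=D^n(\gamma)S^n(P)$, I obtain
$$
\big(S^n(\omega_0)\big)\big|_{D^n(\gamma)V(n,T)\cap\Gamma} = S^n\big(\omega_0|_{\gamma T}\big) = S^n(\gamma P) = D^n(\gamma)\,S^n(P).
$$
The support of $D^n(\gamma)S^n(P)$ is exactly $D^n(\gamma)V(n,T)\cap\Gamma$ (Proposition~\ref{prop:SubstitutionMap}~\eqref{enu:support_Sn_M}), which contains $xB(e,r)$. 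Thus the occurrence of $Q$ lies entirely inside a left-translate of $S^n(P)$ and agrees with it there; since translations are built into the subpatch relation $\prec$, this gives $Q\prec S^n(P)$.

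\textbf{Propagating legality through $S^n$.} It remains to see that $S^n(P)$ is itself $S$-legal. Here I would isolate the monotonicity of the substitution map with respect to $\prec$: if $P\prec R$ then $S^n(P)\prec S^n(R)$, which again follows by applying the restriction condition to $R$ on the support of the relevant translate of $P$ and then peeling off the translation via equivariance. Applying this to a legality witness $P\prec S^k(a)$ yields $S^n(P)\prec S^{n+k}(a)$, so $S^n(P)\in W(S)$. Combining with $Q\prec S^n(P)$ and the transitivity of $\prec$ gives $Q\prec S^{n+k}(a)$, that is $Q\in W(S)$. As $Q$ was arbitrary, $W\big(S^n(\omega_0)\big)_{B(e,r)}\subseteq W(S)$, as claimed.

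\textbf{Main obstacle.} The genuinely substitution-specific content — and the step I would write most carefully — is the compatibility between the combinatorics of $\prec$ and the geometry of the supports $V(n,\cdot)$: that restricting $S^n(\omega_0)$ to the block $D^n(\gamma)V(n,T)\cap\Gamma$ reproduces precisely $D^n(\gamma)S^n(P)$, and that legality propagates through $S^n$. Both rest on the restriction and equivariance identities of Proposition~\ref{prop:SubstitutionMap}, so the difficulty is bookkeeping of left translations rather than any new estimate. The testing-tuple covering property is then used purely as a black box guaranteeing that, once $n\geq N_r(T)$, every $r$-ball occurrence fits inside one such block.
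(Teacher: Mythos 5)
Your proposal is correct and follows essentially the same route as the paper: use the testing-tuple covering property to fit the occurrence $xB(e,r)$ inside $D^n(\gamma)V(n,T)=V(n,\gamma T)$, apply the restriction condition of Proposition~\ref{prop:SubstitutionMap} to identify $S^n(\omega_0)$ there with $S^n(\omega_0|_{\gamma T})$, and conclude legality from $W(\omega_0)_T\subseteq W(S)$. Your extra care with the translation bookkeeping and with the monotonicity of $S^n$ under $\prec$ only makes explicit what the paper delegates to the equivariance identities and to the implication stated after Definition~\ref{def:legalPatches}.
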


\begin{proof}
Let $r >0$ and $P\in W(S^n(\omega_0))_{B(e,r)}$. Hence, there is an $x\in \Gamma$ such that $P= S^n(\omega_0) \vert_{xB(e,r) } $.
By the choice of $N_r(T)$, for each $n\geq N_r(T)$, there is a $\gamma:=\gamma(n,x)\in \Gamma$ satisfying 
$$
x B(e,r)\subseteq D^n \big( \gamma \big)V(n,T)=V\big( n, \gamma T \big)
$$
using Proposition~\ref{prop:SubstitutionMap}~\eqref{enu:support_Sn_Equivariant}. Then Proposition~\ref{prop:SubstitutionMap} implies
\begin{equation*}
P 
	= \big( S^n(\omega_0) \big)\vert_{x B(e,r)}
	\prec \big( S^n(\omega_0) \big)\vert_{V( n, \gamma T )\cap \Gamma } 
	=S^n \Big(  \omega_0 \vert_{\gamma  T} \Big). 
\end{equation*}
Since $W(\omega_0)_T\subseteq W(S)$, the patch $\omega_0 \vert_{\gamma  T}$ is $S$-legal and so $P \in W(S)$.
\end{proof}

When our starting configuration $\omega_0$ and our substitution satisfy rather mild conditions, we are guaranteed that our approximations $S^n(\omega_0)$ eventually contain all legal patches for any fixed sized support. Recall that every letter $a\in\Aa$ is viewed as patch with support $\{e\}$.

\begin{lemma} 
\label{lem:LegalPatches_Appear}
Consider a substitution on $\Aa^\Gamma$ with associated substitution map $S$.
If for $\omega_0\in\Aa^\Gamma$, there is an $n_0\in\N$ such that $\Aa\subseteq W(S^n(\omega_0))$ for all $n\geq n_0$, then for each $r>0$, there exists an $M_r\geq n_0$ satisfying 
\[ 
W\big(S^n(\omega_0)\big) \supseteq W(S)_{B(e,r)}   
\quad \text{for all} \quad n\geq M_r. 
\]
In particular, if the substitution rule $S_0$ is primitive, then the constant $M_r$ can be chosen independently of $\omega_0$.
\end{lemma}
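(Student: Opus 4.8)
The plan is to prove that iterates $S^n(\omega_0)$ eventually contain every legal patch of bounded support, using the primitivity of the substitution together with the structure of legal patches. The key idea is that legal patches are, by definition, subpatches of $S^m(a)$ for some letter $a$ and some $m$; to make such a patch appear in $S^n(\omega_0)$ it suffices to make the letter $a$ appear in $S^{n-m}(\omega_0)$ and then apply $S^m$. So the argument reduces to controlling (i) how large a power is needed so that $S^m(a)$ exhausts all legal patches of support $B(e,r)$, and (ii) how large $n$ must be so that the letter $a$ is guaranteed to appear in $S^n(\omega_0)$.

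First I would fix $r>0$ and analyze step (i). Since $\Aa$ is finite and every legal patch $P\in W(S)_{B(e,r)}$ satisfies $P\prec S^k(a)$ for some $a\in\Aa$ and $k\in\N$, I want a uniform exponent $m_r$ such that every such $P$ already occurs inside $S^{m_r}(a)$ for a suitable letter $a$. The growth of the supports $V(n)\cap\Gamma$ (which tends to all of $\Gamma$ in the sense encoded by the testing-tuple condition, Definition~\ref{def:TestingTuple}~\eqref{enu:TestingDomain}) guarantees that a fixed-radius ball is eventually swallowed by $D^{k}(\gamma)V(k,\{e\})$; combined with finiteness of $\Aa$ and of $W(S)_{B(e,r)}$, this yields a single $m_r$ that works for all legal $r$-patches simultaneously. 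Here one uses transitivity of $\prec$: if $P\prec S^{m_r}(a)$ and $a\prec S^{j}(\omega_0)$ (as a letter appearing in $S^j(\omega_0)$), then $P\prec S^{m_r}(a)\prec S^{m_r+j}(\omega_0)$, hence $P\in W(S^{m_r+j}(\omega_0))$.

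Next I would handle step (ii) via the hypothesis $\Aa\subseteq W(S^n(\omega_0))$ for all $n\geq n_0$: this says precisely that, past $n_0$, every letter occurs somewhere in $S^n(\omega_0)$. Setting $M_r:=m_r+n_0$, I claim $W(S^n(\omega_0))\supseteq W(S)_{B(e,r)}$ for all $n\geq M_r$. Indeed, for such $n$ write $n=m_r+(n-m_r)$ with $n-m_r\geq n_0$; given a legal $P\in W(S)_{B(e,r)}$ choose the witnessing letter $a$ with $P\prec S^{m_r}(a)$, use $a\in W(S^{n-m_r}(\omega_0))$ to find an occurrence, and apply $S^{m_r}$ together with the equivariance/restriction conditions of Proposition~\ref{prop:SubstitutionMap} to conclude $P\prec S^{m_r}\big(S^{n-m_r}(\omega_0)|_{\{\gamma\}}\big)$ sits inside $S^n(\omega_0)$.

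Finally, for the independence of $M_r$ from $\omega_0$ under primitivity: the point is that primitivity (Definition~\ref{def:primitive}) supplies an $L\in\N$ with $a\prec S^L(b)$ for all $a,b\in\Aa$, so after $L$ applications \emph{every} letter appears in the image of \emph{any} letter, hence in $S^{L+k}(\omega_0)$ for any $\omega_0$ as soon as one letter has appeared — which happens after finitely many steps for any nonempty configuration. This makes the threshold $n_0$ (and thus $M_r$) depend only on the substitution, not on $\omega_0$. The main obstacle I expect is step (i): extracting a \emph{uniform} exponent $m_r$ valid for all legal patches of support $B(e,r)$ at once. This requires combining the finiteness of $W(S)_{B(e,r)}$ (itself a consequence of $\Aa$ finite and $B(e,r)$ finite) with the testing-tuple covering property to ensure a fixed power of the substitution already realizes each legal $r$-patch; the geometric bookkeeping of which ball radius is absorbed by $V(k,\{e\})$ after how many inflation steps is the delicate part, whereas steps (ii) and the primitivity refinement are comparatively routine.
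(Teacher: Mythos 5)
Your skeleton coincides with the paper's proof: for each $Q\in W(S)_{B(e,r)}$ take an exponent $m_Q$ with $Q\prec S^{m_Q}(a_Q)$, use finiteness of $W(S)_{B(e,r)}$ to set $m_r:=\max_Q m_Q$ and $M_r:=m_r+n_0$, then chain $Q\prec S^{m_Q}(a_Q)\prec S^{m_Q}\big(S^{n-m_Q}(b)\big)=S^n(b)\prec S^n(\omega_0)$ via transitivity of $\prec$; the primitivity refinement $n_0=L$ is also exactly the paper's.

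There is, however, a concrete misstep in your step (i). You claim that a \emph{uniform} exponent $m_r$ with $P\prec S^{m_r}(a)$ for every legal $P\in W(S)_{B(e,r)}$ follows from ``the growth of the supports $V(n)\cap\Gamma$ \dots encoded by the testing-tuple condition'' combined with finiteness. The covering property of Definition~\ref{def:TestingTuple}~\eqref{enu:TestingDomain} is about absorbing balls $xB(e,r)$ into inflated supports $D^n(\gamma)V(n,T)$; it is the engine of the companion Lemma~\ref{lem:Patches_event_Legal} and says nothing about which patches occur inside $S^{m}(a)$. In particular it cannot upgrade $P\prec S^{m_P}(a_P)$ to $P\prec S^{m_r}(a)$ for a larger common $m_r$: that upgrade needs the seed letter $a_P$ to occur in $S^{m_r-m_P}(c)$ for some letter $c$, which fails for general (non-primitive) substitutions --- for instance, for $S_0(a)=bcb$, $S_0(b)=bbb$, $S_0(c)=ccc$ on $\Z$ the legal word $bcb$ is a subpatch of $S^1(a)$ but of no $S^m(d)$ with $m\geq 2$. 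Two repairs are available: either (as the paper does) drop the uniformization entirely and keep the patch-dependent exponent, decomposing $n=m_Q+(n-m_Q)$ with $n-m_Q\geq n-m_r\geq n_0$; or observe that the uniformization you want is supplied precisely by the hypothesis $\Aa\subseteq W(S^n(\omega_0))$ for $n\geq n_0$ (every letter occurs in $S^{n}(b)$ for some letter $b$ of $\omega_0$), not by any geometric covering statement. Correspondingly, the ``delicate geometric bookkeeping'' you anticipate as the main obstacle is a red herring: no covering argument enters this lemma at all, and with either repair the rest of your argument goes through as in the paper.
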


\begin{proof}
Let $r>0$. By definition of $S$-legal patches, for each $Q\in W(S)_{B(e,r)}$, there exists an $m_Q\in \mathbb{N}$ and a letter $a_Q\in \Aa$ such that $Q\prec S^{m_Q}(a_Q)$. Since $W(S)_{B(e,r)}$ is finite, the following maximum exists
\[ 
m_r:= \max_{Q\in W(S)_{B(e,r)}} m_Q. 
\]	
Set $M_r:=n_0+m_r$.  
Let $n\geq M_r$. For $Q\in W(S)_{B(e,r)}$, let $a_Q\in \Aa$ be chosen as before. Since $\Aa\subseteq W(S^n(\omega_0))$ for $n\geq n_0$, there is a letter $b\in\Aa$ such that $b\prec \omega_0$ and $a_Q\prec S^{n-n_Q}(b)$ holds since $n-m_r \geq n_0$. Hence,
\[
Q
	\prec S^{n_{Q}}(a_Q)
	\prec S^{n_Q} \big( S^{n-n_Q}(b) \big) 
	= S^n(b)
	\prec S^n (\omega_0)
\]
follows implying $Q\in W\big(S^n(\omega_0)\big)$ since $\prec$ is transitive. Since $Q\in W(S)_{B(e,r)}$ was arbitrary, we conclude $W(S)_{B(e,r)}\subseteq W(S^n(\omega_0))$ for $n\geq M_r$.

The case when the substitution rule $S_0$ is primitive follows then directly from the definition by setting $n_0=L$, for the $L\in \N$ in Definition~\ref{def:primitive}.
\end{proof}

With this at hand we can now generalize \cite[Theorem~6.2.3]{BeckusThesis} to our setting, and establish a sufficient condition for convergence of the iterative approximations.

\begin{proposition} 
\label{prop:suff-cond_Conv}
Let $(T,N_T)$ be a testing tuple for a primitive substitution on $\Aa^\Gamma$ with substitution map $S$. If for $\omega_0\in\Aa^\Gamma$, $W(\omega_0)_T\subseteq W(S)$, then 
\[ 
\lim_{n\to\infty} \overline{ \Orb\big( S^n(\omega_0) \big) } = \Omega(S) \text{ in } \Jj.
\] 
\end{proposition}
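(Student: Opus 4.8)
The plan is to show convergence in $(\Jj, \delta_H)$ by establishing, for every radius $r > 0$, that the dictionaries $W(S^n(\omega_0))_{B(e,r)}$ and $W(\Omega(S))_{B(e,r)}$ coincide for all sufficiently large $n$, since by the Hausdorff-metric formula~\eqref{eq:HausdorffMetric_Dictionaries} this forces $\delta_H(\overline{\Orb(S^n(\omega_0))}, \Omega(S)) \leq \frac{1}{r+1}$. To compare these dictionaries I would use the identity $W(\Omega(S)) = W(S)$ valid for primitive substitutions (Theorem~\ref{thm:Omega(S)}), so the target reduces to proving the two-sided inclusion $W(S^n(\omega_0))_{B(e,r)} = W(S)_{B(e,r)}$ eventually.

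The two inclusions come from the two preparatory lemmas. For the inclusion $W(S^n(\omega_0))_{B(e,r)} \subseteq W(S)$, I would invoke Lemma~\ref{lem:Patches_event_Legal} directly: the hypothesis $W(\omega_0)_T \subseteq W(S)$ and the testing-tuple property guarantee that every $B(e,r)$-patch appearing in $S^n(\omega_0)$ is $S$-legal once $n \geq N_r(T)$. For the reverse inclusion $W(S)_{B(e,r)} \subseteq W(S^n(\omega_0))$, I would apply Lemma~\ref{lem:LegalPatches_Appear}. To do so I must first verify its hypothesis, namely that there is an $n_0$ with $\Aa \subseteq W(S^n(\omega_0))$ for all $n \geq n_0$. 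Here primitivity is the key: by Definition~\ref{def:primitive} there is an $L$ with $a \prec S^L(b)$ for all $a,b \in \Aa$, so picking any letter $b$ occurring in $\omega_0$ yields $a \prec S^L(b) \prec S^n(\omega_0)$ for every $a \in \Aa$ and every $n \geq L$; thus $n_0 = L$ works. Lemma~\ref{lem:LegalPatches_Appear} then produces an $M_r \geq n_0$ (independent of $\omega_0$ by primitivity) with $W(S)_{B(e,r)} \subseteq W(S^n(\omega_0))$ for all $n \geq M_r$.

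Combining the two steps, for $n \geq \max\{N_r(T), M_r\}$ both inclusions hold, so $W(S^n(\omega_0))_{B(e,r)} = W(S)_{B(e,r)} = W(\Omega(S))_{B(e,r)}$, which via~\eqref{eq:HausdorffMetric_Dictionaries} gives $\delta_H(\overline{\Orb(S^n(\omega_0))}, \Omega(S)) \leq \frac{1}{r+1}$. Letting $r \to \infty$ (choosing for each $\varepsilon > 0$ a radius $r$ with $\frac{1}{r+1} < \varepsilon$ and the corresponding threshold on $n$) yields $\lim_{n\to\infty} \delta_H(\overline{\Orb(S^n(\omega_0))}, \Omega(S)) = 0$, which is the claim.

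I expect the conceptual content to be entirely carried by the two lemmas, so the proof itself is short; the only point requiring a little care is matching the two inclusions cleanly against the dictionary characterization of $\delta_H$. The main obstacle — already resolved upstream — is the geometric covering property encoded in the testing tuple that makes Lemma~\ref{lem:Patches_event_Legal} work; once that is taken as given, the present statement is a routine assembly. A minor subtlety is that $W(S) = W(\Omega(S))$ must be used to identify the limiting dictionary, and this identity is exactly the primitivity-dependent conclusion of Theorem~\ref{thm:Omega(S)}, so primitivity is invoked twice: once to ensure all letters eventually appear, and once to identify $W(\Omega(S))$ with $W(S)$.
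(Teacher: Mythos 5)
Your proposal is correct and takes essentially the same route as the paper, which likewise reduces the claim via equation~\eqref{eq:HausdorffMetric_Dictionaries} to showing $W(S^n(\omega_0))_{B(e,r)}=W(S)_{B(e,r)}$ for all large $n$ and obtains the two inclusions from Lemma~\ref{lem:Patches_event_Legal} and Lemma~\ref{lem:LegalPatches_Appear}. The only cosmetic slip is in your verification of the hypothesis of Lemma~\ref{lem:LegalPatches_Appear}: for $n>L$ one should take the letter $b$ occurring in $S^{n-L}(\omega_0)$ rather than in $\omega_0$ to get $a\prec S^L(b)\prec S^n(\omega_0)$; this is precisely the $n_0=L$ case already settled in the paper's proof of that lemma.
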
	

\begin{proof}
Due to equation~\eqref{eq:HausdorffMetric_Dictionaries}, it suffices to prove that for every $r>0$, there exists an $N_r(T)$ such that $W(S^n(\omega_0))_{B(e,r)}=W(S)_{B(e,r)}$ if $n\geq N_r(T)$. The existence of $N_r(T)$ follows from Lemma~\ref{lem:Patches_event_Legal} and Lemma~\ref{lem:LegalPatches_Appear}.
\end{proof}

As a consequence of the previous proposition, we conclude that if any subsequence of iterative approximations converges then the whole sequence converges.

\begin{cor} 
\label{cor:subseq-cond}
Let $\Ss$ be a primitive substitution datum over a dilation datum $\Dd$. Let $\omega_0\in \Aa^\Gamma$ and denote $\Omega_n:= \overline{ \Orb\big( S^{n}(\omega_0) \big) }$. Then the following assertions are equivalent.
\begin{enumerate}[(i)]
	\item The iterative approximation sequence, $\Omega_n$, converges to $\Omega(S)$.
	\item There exists a subsequence $\big( \Omega_{n_k} \big)_{k=1}^\infty$ converging to $\Omega(S)$.
\end{enumerate} 
\end{cor}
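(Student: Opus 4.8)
The implication (i) $\Rightarrow$ (ii) is immediate, since every subsequence of a convergent sequence in the metric space $(\Jj,\delta_H)$ converges to the same limit; so the plan is to concentrate on (ii) $\Rightarrow$ (i). The guiding idea is that the subsequence hypothesis forces, at a \emph{single} sufficiently large index, the testing-domain patches of the corresponding iterate to become $S$-legal, and that Proposition~\ref{prop:suff-cond_Conv} then upgrades this into convergence of the whole tail of the sequence.

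First I would fix a testing tuple $(T,N_T)$ for the underlying dilation datum, which exists by Proposition~\ref{prop:TestingTuple_Existence}, and choose a radius $r_T>0$ with $T\subseteq B(e,r_T)$. Let $(\Omega_{n_k})_k$ denote the given subsequence with $\delta_H(\Omega_{n_k},\Omega(S))\to 0$. Expressing the Hausdorff metric through local dictionaries via equation~\eqref{eq:HausdorffMetric_Dictionaries}, this convergence produces an index $k$ for which $W(\Omega_{n_k})_{B(e,r_T)}=W(\Omega(S))_{B(e,r_T)}$. Since $S^{n_k}(\omega_0)\in\Orb(S^{n_k}(\omega_0))\subseteq\Omega_{n_k}$ and $W(\Omega(S))=W(S)$ by the primitive case of Theorem~\ref{thm:Omega(S)}, this yields the inclusion $W(S^{n_k}(\omega_0))_{B(e,r_T)}\subseteq W(S)_{B(e,r_T)}$; note that only this one-sided inclusion is needed.

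Next I would transfer legality from the ball $B(e,r_T)$ down to the testing domain $T$. Any $P\in W(S^{n_k}(\omega_0))_T$ occurs at some translate $\gamma\in\Gamma$ in $S^{n_k}(\omega_0)$; extending it to the ball of radius $r_T$ about the same center produces a patch $Q\in W(S^{n_k}(\omega_0))_{B(e,r_T)}$ with $P\prec Q$, using $T\subseteq B(e,r_T)$. By the inclusion above $Q\in W(S)$, and since legality passes to subpatches (transitivity of $\prec$, as recorded after Definition~\ref{def:legalPatches}), we obtain $P\in W(S)$. Hence $W(S^{n_k}(\omega_0))_T\subseteq W(S)$.

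Finally, writing $\omega_1:=S^{n_k}(\omega_0)$, Proposition~\ref{prop:suff-cond_Conv} applies and gives $\lim_{m\to\infty}\overline{\Orb(S^m(\omega_1))}=\Omega(S)$. Because $S^m(\omega_1)=S^{m+n_k}(\omega_0)$, this is precisely the statement that $\Omega_{m+n_k}\to\Omega(S)$ as $m\to\infty$, and a shift of the index by the fixed constant $n_k$ then gives $\Omega_n\to\Omega(S)$, completing (ii) $\Rightarrow$ (i). I expect the one delicate step to be the passage from the subsequence hypothesis to legality of the $T$-patches at a single index: it rests on reading $\delta_H$-convergence of orbit closures as genuine agreement of local dictionaries and on the primitivity-driven identity $W(\Omega(S))=W(S)$. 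Everything after that is a clean invocation of the sufficient condition, so the structural heart of the corollary is really the observation that ``legality of the testing patches at one late index'' is enough to trigger full convergence.
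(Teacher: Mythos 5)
Your proposal is correct and follows essentially the same route as the paper's proof: extract one late index $n_k$ at which the local dictionaries over a ball containing the testing domain $T$ agree with $W(\Omega(S))=W(S)$, deduce $W(S^{n_k}(\omega_0))_T\subseteq W(S)$, and invoke Proposition~\ref{prop:suff-cond_Conv} together with an index shift. You merely spell out the subpatch-restriction and reindexing steps that the paper leaves implicit.
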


\begin{proof}
(i)$\Rightarrow$(ii): This is obvious.

\medskip

(ii)$\Rightarrow$(i): Using Proposition~\ref{prop:suff-cond_Conv}, it suffices to show that there is an $n_0\in \mathbb{N}$ such that $W\big( \Omega_{n_0} \big)_T\subseteq W(S)$, where $T$ is a testing domain. Since $T$ is finite, there exists an $r_0>0$ such that $T\subseteq B(e,r_0)$. Since $\Omega_{n_k}\to \Omega(S)$, there is a $k_0\in \N$ by equation~\eqref{eq:HausdorffMetric_Dictionaries}, such that $W(\Omega_{n_k})_{B(e,r_0)}=W\big( \Omega(S) \big)_{B(e,r_0)}$ for all $k\geq k_0$. Thus, setting $n_0=n_{k_0}$ finishes the proof.
\end{proof}

\begin{remark}
\label{rem:RelaxAss_CharConv1}
One can replace the assumption of primitivity in Proposition~\ref{prop:suff-cond_Conv} and Corollary~\ref{cor:subseq-cond} by the weaker assumption in Lemma~\ref{lem:LegalPatches_Appear}. More precisely, if for $\omega_0\in\Aa^\Gamma$, there exists an $n_0\in\N$ such that $\Aa\subseteq W(S^n(\omega_0))$ for all $n\geq n_0$, then the statements stay valid. 
With this at hand, one can prove that the conditions (ii), (iii) and (iv) in Theorem~\ref{thm:charact_convergence} are still equivalent under this weaker assumption.
\end{remark}

\subsection{Proof of Theorem~\ref{thm:charact_convergence}}
\label{subsec:Pf_MainThm}

Before proving Theorem~\ref{thm:charact_convergence}, we show that cycles in the substitution graph $G_S(T,N_T)$ for a testing tuple $(T,N_T)$ imply that illegal patches reappear infinitely often when applying the substitution map.

\begin{lemma} 
\label{lem:prev-step}
Let $(T,N_T)$ be a testing tuple of a substitution on $\Aa^\Gamma$ with substitution map $S$ and $\omega_0\in\Aa^\Gamma$. If $P\in W\big( S^{N_T}(\omega_0) \big)_T \setminus W(S)$, then there exists a $Q\in W(\omega_0)_T\setminus W(S)$ such that $P\prec S^{N_T}(Q)$. 
\end{lemma}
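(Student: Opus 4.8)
The plan is to trace the illegal $T$-patch $P$ sitting inside $S^{N_T}(\omega_0)$ back to a single $T$-patch of the original configuration $\omega_0$, using that $N_T$ was defined precisely so that every translate $xT$ is covered by one dilated copy of $V(N_T,T)$. First I would unpack the subpatch relation: since $P\in W\big(S^{N_T}(\omega_0)\big)_T$ has support $T$, there is an $x\in\Gamma$ with $P(\eta)=S^{N_T}(\omega_0)(x\eta)$ for all $\eta\in T$; that is, $P$ is the restriction of $S^{N_T}(\omega_0)$ to $xT$, read as a patch on $T$.

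Next I would invoke Definition~\ref{def:TestingTuple}~\eqref{enu:N_T} with $m=N_T$: applied to this particular $x$ it yields a $\gamma_x\in\Gamma$ with $xT\subseteq D^{N_T}(\gamma_x)V(N_T,T)$. By the support equivariance of Proposition~\ref{prop:SubstitutionMap}~\eqref{enu:support_Sn_Equivariant}, the right-hand side equals $V(N_T,\gamma_x T)$, so that $xT\subseteq V(N_T,\gamma_x T)\cap\Gamma$. I would then take $Q$ to be the $T$-patch of $\omega_0$ located at $\gamma_x$, namely $Q(\eta):=\omega_0(\gamma_x\eta)$ for $\eta\in T$. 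By construction $Q\prec\omega_0$, hence $Q\in W(\omega_0)_T$, and moreover $\gamma_x Q=\omega_0\vert_{\gamma_x T}$.

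To obtain $P\prec S^{N_T}(Q)$, I would combine the two defining properties of the substitution map. The restriction condition gives $\big(S^{N_T}(\omega_0)\big)\vert_{V(N_T,\gamma_x T)\cap\Gamma}=S^{N_T}\big(\omega_0\vert_{\gamma_x T}\big)=S^{N_T}(\gamma_x Q)$, and the equivariance condition rewrites this as $D^{N_T}(\gamma_x)\,S^{N_T}(Q)$. Since $xT\subseteq V(N_T,\gamma_x T)\cap\Gamma$, the patch $P$ is exactly the restriction of $D^{N_T}(\gamma_x)S^{N_T}(Q)$ to $xT$, which exhibits $P$ as a subpatch of $D^{N_T}(\gamma_x)S^{N_T}(Q)$ with witness $x$; because the subpatch relation is unaffected by the overall left translation by $D^{N_T}(\gamma_x)$, this yields $P\prec S^{N_T}(Q)$.

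Finally I would establish illegality of $Q$ by contraposition. Were $Q$ $S$-legal, the transitivity property recorded after Definition~\ref{def:legalPatches} (that $Q\in W(S)$ together with $P\prec S^{N_T}(Q)$ forces $P\in W(S)$) would place $P$ in $W(S)$, contradicting $P\notin W(S)$. Hence $Q\in W(\omega_0)_T\setminus W(S)$, as claimed. I expect the only genuine hazard to be the translation bookkeeping, specifically ensuring that $Q$ has support \emph{exactly} $T$ so that it is a bona fide element of $W(\omega_0)_T$; this is precisely what the identity $\gamma_x Q=\omega_0\vert_{\gamma_x T}$ together with the equivariance condition handles, rather than any analytic difficulty.
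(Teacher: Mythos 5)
Your proof is correct and follows essentially the same route as the paper: locate $xT$ inside a translate $V(N_T,\gamma T)$ using Definition~\ref{def:TestingTuple}~\eqref{enu:N_T}, set $Q$ to be the $T$-patch of $\omega_0$ at $\gamma$, apply the restriction and equivariance properties of Proposition~\ref{prop:SubstitutionMap} to get $P\prec S^{N_T}(Q)$, and deduce illegality of $Q$ from that of $P$. Your extra care in distinguishing $Q$ (supported on $T$) from $\omega_0\vert_{\gamma T}=\gamma Q$ (supported on $\gamma T$) is a harmless refinement of the paper's slightly looser bookkeeping, not a different argument.
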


\begin{proof}
We know that $\text{supp}(P)= x T$ for some $x\in \Gamma$. Since $(T,N_T)$ is a testing tuple, there exists a $\gamma =\gamma(x)\in \Gamma$ such that $x T\subseteq D^{N_T}(\gamma)  V(N_T,T) =V(N_T,\gamma T)$. Define $Q\in W(\omega_0)_T$ by $Q:= \omega_0\vert_{ \gamma T } $. By Proposition~\ref{prop:SubstitutionMap}, we conclude that
\begin{align*}
P =\Big( S^{N_T}(\omega_0) \Big)|_{ x T } 
	\prec \Big( S^{N_T}(\omega_0) \Big)|_{ V(N_T,\gamma T)\cap \Gamma }
	= S^{N_T}\Big( \omega_0 |_{\gamma  T}  \Big)
	= S^{N_T}(Q).
\end{align*}
Since $P\prec S^{N_T}(Q)$, while $P\notin W(S)$, $Q\notin W(S)$ follows.
\end{proof}

The last lemma allows to find suitable paths in the graph $G_S(T,N_T)$ of length $n\in\N$ if there is a patch in $W(S^{n\cdot N_T}(\omega_0))_T\setminus W(S)$. Recall that the vertex set of $G_S(T,N_T)$ is $\Aa^T$.

\begin{lemma}\label{lem:paths-ileg} 
Let $(T,N_T)$ be a testing tuple of a substitution on $\Aa^\Gamma$ with substitution map $S$ and substitution graph $G_S(T,N_T)$. If $P\in W\big(S^{n\cdot N_T}(\omega_0)\big)_T\setminus W(S)$ holds for $\omega_0\in\Aa^\Gamma$ and $n\in\N$, then there exists a path $(P_0,\ldots,P_n)$ in $G_S(T,N_T)$ of length $n$ such that $P_n=P$ and $P_0\in W(\omega_0)_T$.
\end{lemma}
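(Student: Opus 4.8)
The plan is to induct on $n$, peeling off one block of $N_T$ substitution steps at each stage and using Lemma~\ref{lem:prev-step} as the engine that manufactures a predecessor. Recall from Definition~\ref{def:S-graphs} that an edge $(P_i,P_{i+1})$ of $G_S(T,N_T)$ encodes exactly the relation $P_{i+1}\prec S^{N_T}(P_i)$ with both endpoints illegal; hence a directed path of length $n$ from $P_0$ to $P_n$ witnesses that the illegal patch $P_n$ appears inside $S^{n\cdot N_T}(P_0)$ with all intermediate patches illegal as well. Since our $P\in W(S^{n\cdot N_T}(\omega_0))_T\setminus W(S)$ lives precisely after $n$ such blocks, I expect a length-$n$ path ending at $P$ and starting in $W(\omega_0)_T$, which I construct by repeatedly retreating one block at a time. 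Note that the testing-tuple hypothesis on $(T,N_T)$ enters only through Lemma~\ref{lem:prev-step}.

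The base case $n=0$ is immediate: a path of length $0$ is a single vertex, and $P\in W(S^0(\omega_0))_T\setminus W(S)=W(\omega_0)_T\setminus W(S)$ lets me take $P_0:=P\in W(\omega_0)_T$. For the inductive step, assume the claim for $n-1\geq 0$ and let $P\in W(S^{n\cdot N_T}(\omega_0))_T\setminus W(S)$. Setting $\rho:=S^{(n-1)\cdot N_T}(\omega_0)\in\Aa^\Gamma$ (a genuine configuration, since $S\colon\Aa^\Gamma\to\Aa^\Gamma$ is well defined by Proposition~\ref{prop:SubstitutionMap}), the identity $S^{n\cdot N_T}(\omega_0)=S^{N_T}(\rho)$ gives $P\in W(S^{N_T}(\rho))_T\setminus W(S)$. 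Applying Lemma~\ref{lem:prev-step} to $\rho$ produces $Q\in W(\rho)_T\setminus W(S)=W(S^{(n-1)\cdot N_T}(\omega_0))_T\setminus W(S)$ with $P\prec S^{N_T}(Q)$. The induction hypothesis then yields a path $(P_0,\dots,P_{n-1})$ of length $n-1$ in $G_S(T,N_T)$ with $P_{n-1}=Q$ and $P_0\in W(\omega_0)_T$. Finally I check that $(Q,P)$ is an edge: by Definition~\ref{def:S-graphs} this needs $P\prec S^{N_T}(Q)$ (from Lemma~\ref{lem:prev-step}), $Q\notin W(S)$ (likewise), and $P\notin W(S)$ (the standing hypothesis), so $(P_0,\dots,P_{n-1},P)$ is the desired length-$n$ path.

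The one point that needs genuine care is the \textbf{translation bookkeeping} concealed in writing ``$Q\in W(\rho)_T$''. Lemma~\ref{lem:prev-step} literally returns the restriction $\rho|_{\gamma T}$, whose support is a translate $\gamma T$ of $T$, whereas the vertices of $G_S(T,N_T)$ are honest elements of $\Aa^T$. I would resolve this by identifying $\rho|_{\gamma T}$ with its translate supported on $T$ and then appealing to two facts: the subpatch order $\prec$ is translation invariant (it already quantifies over a shift in $\Gamma$), and $S$ is equivariant, $S^{N_T}(\gamma' Q)=D^{N_T}(\gamma')\,S^{N_T}(Q)$ by Proposition~\ref{prop:SubstitutionMap}. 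Together these guarantee that the three edge conditions $P\prec S^{N_T}(Q)$, $Q\notin W(S)$, and $P\notin W(S)$ are unaffected by replacing $Q$ with any translate, so the corresponding vertex of $\Aa^T$ really does satisfy the edge requirement of Definition~\ref{def:S-graphs}. Once this identification is fixed, every remaining step is a direct invocation of an already-proved statement, so no computation is required.
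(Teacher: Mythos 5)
Your proof is correct and follows essentially the same route as the paper's: the paper constructs the path by the same backward recursion, setting $P_n:=P$ and repeatedly applying Lemma~\ref{lem:prev-step} to $\omega_{j-1}:=S^{(j-1)\cdot N_T}(\omega_0)$ to obtain each predecessor, which is exactly your induction unrolled. Your explicit attention to the translation bookkeeping (identifying $\rho|_{\gamma T}$ with a vertex in $\Aa^T$ via equivariance of $S$ and translation invariance of $\prec$) is a point the paper leaves implicit, but it does not change the argument.
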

	
\begin{proof}
The path is defined recursively. Set $P_n:=P$. For $j\geq 1$, define $\omega_{j}:=S^{j\cdot N_T}(\omega_0)$ and observe $S^{N_T}(\omega_{j-1}) = \omega_{j}$. 
Suppose $P_j\in W(\omega_j)\setminus W(S)$ for some $j\geq 1$. By Lemma~\ref{lem:prev-step} applied to $\omega_{j-1}$, there is a patch $P_{j-1}\in W(\omega_{j-1})\setminus W(S)$ such that $P_j\prec S^{N_T}(P_{j-1})$. Since $P_{j-1},P_j\not\in W(S)$, there is an edge from $P_{j-1}$ to $P_j$, see Definition~\ref{def:S-graphs}. Thus, we have recursively constructed a path $(P_0,\ldots,P_n)$ in $G_S(T,N_T)$ with $P_n=P$ and $P_0\in W(\omega_0)_T$.
\end{proof}
	
\begin{proof}[Proof of Theorem~\ref{thm:charact_convergence}] 
Since $S$ is primitive, $\Omega(S)$ is minimal (Theorem~\ref{thm:Omega(S)}) and so $\sigma(H_{\omega})=\sigma(H_{\rho})$ holds for all $\omega,\rho\in\Omega(S)$ and for every Schrödinger operator $H$ with finite range, see e.g. \cite[Proposition~1.2.2]{Le99} in the abelian and \cite[Theorem~3.6.8]{BeckusThesis} in the non-abelian case.  
Thus, the equivalence of (\ref{enu:SpectrConv}) and (\ref{enu:conv}) follows from \cite[Theorem~2]{BBdN18}. Note that we can apply the result in the reference as $\Gamma$ is amenable in our setting, see \cite[Section~3.4]{BHP21-Symbolic}.

We continue to prove the equivalences of (\ref{enu:conv}), (\ref{enu:finite-path}) and (\ref{enu:path-length}) via contraposition.

(\ref{enu:conv})$\Rightarrow$(\ref{enu:finite-path}): Assume towards contraposition that (\ref{enu:finite-path}) is not true. Then there is a path $(P_0,\ldots,P_\ell)$ in $G_S(T,N_T)$ with $P_0\in W(\omega_0)_T$ and $P_i=P_j$ for some $0\leq i<j\leq \ell$. By Definition~\ref{def:S-graphs} of the substitution graph $G_S(T,N_T)$, we conclude
$$
P_{k+1}\prec S^{N_T}(P_k) 
	\qquad\textrm{and}\qquad
P_k\not\in W(S).
$$
Thus, $P_i = P_j \prec S^{(j-i)\cdot N_T}(P_i)$ follows. Hence, we inductively conclude $P_i\prec S^{m\cdot (j-i)\cdot N_T}(P_i)$ for all $m\in\N$. On the other hand, $P_i\prec S^{i\cdot N_T}(P_0)\prec S^{i\cdot N_T}(\omega_0)$ holds as $(P_0,\ldots,P_i)$ is a path in $G_S(T,N_T)$. Set $n_m:=\big(i+m\cdot(j-i)\big)\cdot N_T$ for $m\in\N$ where $j-i \geq 1$ by construction. Then $n_m\to \infty$ if $m\to\infty$ and $P_i\prec S^{n_m}(\omega_0)$, namely $P_i\in W(S^{n_m}(\omega_0))_T$. Since $P_i\not\in W(S)$, the subsequence $\big(\overline{\Orb(S^{n_m}(\omega_0))}\big)_{m\in\N}$ does not converge to $\Omega(S)$ by equation~\eqref{eq:HausdorffMetric_Dictionaries}. Thus, $\overline{\Orb(S^n(\omega_0))}$ does not converge to $\Omega(S)$ by Corollary~\ref{cor:subseq-cond}, namely (\ref{enu:conv}) is not true.

\medskip

(\ref{enu:finite-path})$\Rightarrow$(\ref{enu:path-length}): Assume towards contraposition that (\ref{enu:path-length}) is not true. Hence there exists a path  $(P_0,...,P_n)$ in $G_S(T)$ such that $P_0\in W(\omega_0)_T$ and $n\geq \vert \Aa^T\vert$ where $\vert \Aa^T\vert$ is the total number of vertices in $G_S(T,N_T)$. Thus, there exist distinct  $i,j\in \{0,1,...,n\}$ such that $P_i=P_j$. We have therefore found a path in $G_S(T,N_T)$ starting at some $P_0\in W(\omega_0)_T$, which contains a closed subpath, namely (\ref{enu:finite-path}) is not true.
 
\medskip

(\ref{enu:path-length})$\Rightarrow$(\ref{enu:conv}): Assume towards contraposition that (\ref{enu:conv}) is not true. We know  that $W(S^n(\omega_0))_T\nsubseteq W(S)$ for all $n\in \mathbb{N}$, otherwise  it would follow that $\lim_{n\to\infty} \overline{\Orb(S^n(\omega_0))}= \Omega(S)$ from Proposition~\ref{prop:suff-cond_Conv}. Choosing $n=\vert \Aa^T\vert^{N_T}$, there must be a $P_n\in W(S^{n}(\omega_0))_T\setminus W(S)$. Thus, Lemma~\ref{lem:paths-ileg} implies that there is a path $(P_0,...,P_n)$ in $G_S(T,N_T)$ of length $\vert \Aa^T \vert$. Therefore, (\ref{enu:path-length}) does not hold. 		
\end{proof}

\section{Proof of the second main result} 
\label{sec:Rate_Converg}

We prove Theorem~\ref{thm:upper-bound-estim} by proving a quantitative version of Lemma~\ref{lem:Patches_event_Legal} and Lemma~\ref{lem:LegalPatches_Appear}. We start by further examining testing domains. We prove their existence and how the associated map $r\mapsto N_r(T)$ grows. The arguments used in this section can be compared with the explanations for block substitutions, see Proposition~\ref{prop:test-euclid}. These estimates are used to show the explicit quantitative estimates in Theorem~\ref{thm:upper-bound-estim}. To this end, a thorough study of the supports $V(n,T)$ is necessary.

\subsection{More on testing domains} 
\label{subsec:TestingDomain_Existence}

The following statement was mainly proven in \cite{BHP21-Symbolic} or follows by similar arguments.

\begin{lemma}
\label{lem:SupportGrowth}
Let $\Dd=(G,d,(D_\lambda)_{\lambda>0},\Gamma,V)$ be a dilation datum and $\lambda_0>1$ be such that $D_{\lambda_0}[\Gamma]\subseteq\Gamma$. Then the following statements hold for all $n\in\N_0$, $\gamma\in\Gamma$ and $M,M'\subseteq \Gamma$ finite and nonempty.
\begin{enumerate}[(a)]
\item \label{enu:V(n,V(m,M))} For $m\in\N_0$, we have $V\big( n,V(m,M)\cap \Gamma \big)=V(n+m,M)$.
\item \label{enu:M_subset_M'} If $M\subseteq M'$, then $V(n,M)\subseteq V(n,M')$.
\item \label{enu:M_cap_M'} If $M\cap M'= \emptyset$, then $V(n,M\sqcup M')=V(n,M)\sqcup V(n,M')$.
\end{enumerate}
\end{lemma}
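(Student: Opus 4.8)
My plan is to treat all three parts by induction on $n\in\N_0$, resting on two structural facts about the data. First, since $V$ is a fundamental domain for $\Gamma$, the translates $\{\gamma V\}_{\gamma\in\Gamma}$ partition $G$; consequently $e\in V$ together with the partition forces $V\cap\Gamma=\{e\}$, so that for every $A\subseteq\Gamma$ one has $A\cdot V=\bigsqcup_{\gamma\in A}\gamma V$, the trace identity $(A\cdot V)\cap\Gamma=A$, and the assignment $A\mapsto A\cdot V$ is injective and inclusion-preserving. Second, $D:=D_{\lambda_0}$ is a group automorphism of $G$ with $D[\Gamma]\subseteq\Gamma$, hence it preserves unions, inclusions and disjointness and commutes with the bracketed operations appearing in the recursion \eqref{eq:recursive_V(n)}. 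These two facts are what let me push an inclusion or a disjoint decomposition through one step of $V(n,M)=D\big[(V(n-1,M)\cap\Gamma)\cdot V\big]$.

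For (b) I would induct on $n$. The base case $V(0,M)=M\cdot V\subseteq M'\cdot V=V(0,M')$ is immediate from $M\subseteq M'$, and for the step the inductive inclusion $V(n-1,M)\subseteq V(n-1,M')$ survives intersecting with $\Gamma$, then right-multiplying by $V$, then applying the bijection $D$, each operation being monotone. Part (c) is the analogous induction for disjoint unions. The base case uses the partition fact: $M\cap M'=\emptyset$ forces $M\cdot V$ and $M'\cdot V$ to be disjoint with union $(M\sqcup M')\cdot V$, giving $V(0,M\sqcup M')=V(0,M)\sqcup V(0,M')$. In the step I intersect the inductive decomposition with $\Gamma$ (still disjoint), multiply by $V$ (disjoint by the partition fact), and apply $D$ (which preserves disjointness since it is injective), obtaining $V(n,M\sqcup M')=V(n,M)\sqcup V(n,M')$.

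The substantive part is (a), the composition law, which I would split into a $\Gamma$-trace identity and a reconstruction step. Writing $N:=V(m,M)\cap\Gamma$ and $L_k:=V(k,M)\cap\Gamma$, I first prove $V(j,N)\cap\Gamma=L_{j+m}$ for all $j\ge 0$ by induction on $j$: the base $V(0,N)\cap\Gamma=N=L_m$ holds by the trace fact, and the step uses $V(j,\cdot)\cap\Gamma=D\big[(V(j-1,\cdot)\cap\Gamma)\cdot V\big]\cap\Gamma$ together with the induction hypothesis. Alternatively, this trace identity follows at once from Proposition~\ref{prop:SubstitutionMap} and the semigroup law $S^{n+m}=S^n\circ S^m$: applying both sides to a patch of support $M$ and reading off supports gives $V(n,V(m,M)\cap\Gamma)\cap\Gamma=V(n+m,M)\cap\Gamma$. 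Then, for $n\ge 1$, both $V(n,N)=D\big[(V(n-1,N)\cap\Gamma)\cdot V\big]$ and $V(n+m,M)=D\big[(V(n+m-1,M)\cap\Gamma)\cdot V\big]$ are determined by their inner lattice sets, and the trace identity shows these coincide; applying the common operation $A\mapsto D[A\cdot V]$ yields the full set equality $V(n,N)=V(n+m,M)$.

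I expect the reconciliation in (a) to be the main obstacle. The recursion for $V(n,\cdot)$ only ``sees'' the previous stage through its $\Gamma$-trace, so although the traces propagate cleanly, upgrading this to an equality of full subsets of $G$ requires the observation that one dilation step reconstructs the entire set from a lattice set via $A\mapsto D[A\cdot V]$, together with the injectivity of $A\mapsto A\cdot V$. This is exactly why the reconstruction argument is run from the level $n\ge 1$ at which a dilation has already been applied, whereas the $\Gamma$-trace identity—which is what is actually invoked in the later sections through Proposition~\ref{prop:SubstitutionMap}—holds at every level.
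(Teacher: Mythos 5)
Your proof is correct, and for part (c) it is essentially the paper's own argument: the same induction on $n$, pushing the disjoint decomposition through intersection with $\Gamma$, right-multiplication by $V$ (using that the translates $\gamma V$ partition $G$), and the automorphism $D$. Where you genuinely diverge is in (a) and (b): the paper simply cites \cite[Lemma~5.16]{BHP21-Symbolic} for these, whereas you give a self-contained argument. Your proof of (b) is the obvious monotone induction and is fine. Your proof of (a) via the $\Gamma$-trace identity $V(j,N)\cap\Gamma=V(j+m,M)\cap\Gamma$ followed by the reconstruction $A\mapsto D[A\cdot V]$ is correct for $n\geq 1$, and the preliminary facts you use ($V\cap\Gamma=\{e\}$, hence $(A\cdot V)\cap\Gamma=A$ and injectivity of $A\mapsto A\cdot V$) all follow from $e$ lying in the interior of the fundamental domain $V$, as required by (D3). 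What your approach buys is transparency about exactly which structural inputs are needed; what the citation buys the authors is brevity.

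One point deserves to be made explicit rather than left implicit in your closing remark: for $n=0$ and $m\geq 1$ the full-set identity in (a) actually fails, and your proof correctly cannot reach it. Indeed $V(0,V(m,M)\cap\Gamma)=(V(m,M)\cap\Gamma)\cdot V$ is a union of $\Gamma$-translates of $V$, while $V(m,M)$ is a union of $D^m[\Gamma]$-translates of $D^m[V]$; already for $G=\R$, $\Gamma=\Z$, $V=[-\tfrac12,\tfrac12)$, $\lambda_0=2$, $M=\{0\}$, $m=1$ one gets $\{-1,0\}+V=[-\tfrac32,\tfrac12)\neq[-1,1)=V(1,M)$. So the lemma as stated (``for all $n\in\N_0$'') is off by this edge case; only the $\Gamma$-trace version survives at $n=0$, which is all the paper ever uses (through Proposition~\ref{prop:SubstitutionMap} and in Lemma~\ref{lem:test-lemma}, where the relevant exponent can be taken positive). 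You should state this restriction to $n\geq 1$ (or to the trace identity) explicitly rather than only hinting at it; with that caveat your argument is complete.
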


\begin{proof}
Statements \eqref{enu:V(n,V(m,M))} and \eqref{enu:M_subset_M'} are proven in \cite[Lemma~5.16]{BHP21-Symbolic}. Statement \eqref{enu:M_cap_M'} follows by induction. The claim is true for $n=0$, since $V$ is a fundamental domain. Assume that the claim holds for $n$. Since $D$ is an automorphism and $G=\sqcup_{\gamma \in \Gamma} D(\gamma)D[V] $, we conclude for the induction step
\begin{align*}
V(n+1,M\sqcup M')&= D \Big[ \big( V(n,M\sqcup M') \cap \Gamma \big)\cdot V  \Big] \\
	&=D\Big[ \big( V(n,M) \cap \Gamma \big) \sqcup \big( V(n, M') \cap \Gamma \big) \Big]\cdot D[V]\\
	&= \Big( D\big[ V(n,M)\cap \Gamma \big] \cdot D[V] \Big)\sqcup \Big( D\big[ V(n,M')\cap \Gamma \big] \cdot D[V] \Big)\\
	&=V(n+1,M)\sqcup V(n+1,M').
\end{align*}
\end{proof}

\begin{remark}
Note that Lemma~\ref{lem:SupportGrowth}~\eqref{enu:M_subset_M'} implies that if $T$ is a testing domain and a finite $T'\subseteq \Gamma$ satisfies $T\subseteq T'$, then $T'$ is also a testing domain.
\end{remark}

Now we can prove an analog of Proposition~\ref{prop:test-euclid} for general substitutions.
	
\begin{proposition}[Existence of a testing tuple]
\label{prop:TestingTuple_Existence}
Let $\Dd=(G,d,(D_\lambda)_{\lambda>0},\Gamma,V)$ be a dilation datum and $\lambda_0>1$ be such that $D_{\lambda_0}[\Gamma]\subseteq\Gamma$ and $\lambda_0$ is sufficiently large relative to $V$ with respect to $C_->0$, $s_1\in\N_0$ and $z\in\Gamma$, i.e., $D^n\big[B(z,C_-)\big]\subseteq V(s_1+n)$ for all $n\in\N$. Fix $\delta > 0$. Then the following hold. 
\begin{enumerate}[(a)]
\item There exist an $s_2\in\N$ such that $V \cdot B(e,\delta) \subseteq B\big( e , C_- \lambda_0^{s_2}\big)$.
\item If $\delta>0$ and $s_2\in\N$ satisfy the inclusion in (a), then $T:=V(s_1+s_2)\cap\Gamma$ is a testing domain and for each $r\geq 1$, $N_r(T):= \frac{\log(r C_T )}{\log(\lambda_0)}$ with $C_T:=\frac{1}{\delta}$ satisfies Definition~\ref{def:TestingTuple}~\eqref{enu:TestingDomain}. 
\end{enumerate}
In particular, there exsists a testing tuple $(T,N_T)$ for $\Dd$ and $\lambda_0$.
\end{proposition}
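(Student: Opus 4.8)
\section*{Proof proposal}

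The plan is to dispatch part~(a) by a boundedness argument and then reduce part~(b) to a geometric covering statement about translates of $V(n,T)$, which I would resolve using the fundamental-domain tiling transported by the dilation. Throughout write $D:=D_{\lambda_0}$. For part~(a): since $V$ is relatively compact and $d$ is proper, $\overline V$ is bounded, say $\overline V\subseteq B(e,r_+)$. Left-invariance of $d$ then gives $V\cdot B(e,\delta)\subseteq B(e,r_++\delta)$ (for $g\in V$, $h\in B(e,\delta)$ one has $d(gh,e)\leq d(h,e)+d(g,e)<\delta+r_+$). As $\lambda_0>1$, the numbers $C_-\lambda_0^{s_2}$ tend to infinity, so any $s_2\in\N$ with $C_-\lambda_0^{s_2}\geq r_++\delta$ satisfies the inclusion.

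For part~(b) I would first simplify the support. By Lemma~\ref{lem:SupportGrowth}~\eqref{enu:V(n,V(m,M))} applied to $T=V(s_1+s_2)\cap\Gamma$ one has $V(n,T)=V(n+s_1+s_2)$. The hypothesis that $\lambda_0$ is sufficiently large, used at index $n+s_2$, then yields $D^{n+s_2}\big[B(z,C_-)\big]\subseteq V(s_1+n+s_2)=V(n,T)$. Since $D^{n+s_2}$ scales the metric by $\lambda_0^{n+s_2}$ and $B(z,C_-)=z\,B(e,C_-)$, this exactly says that $V(n,T)$ contains the large ball $D^{n+s_2}(z)\,B(e,C_-\lambda_0^{n+s_2})$. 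Hence it suffices to place all of $xB(e,r)$ inside one translate $D^n(\gamma)$ of this ball.

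The heart of the argument is the covering step. Because $D^n$ is an automorphism and $V$ is a fundamental domain, applying $D^n$ to $G=\bigsqcup_{\eta\in\Gamma}\eta V$ produces the tiling $G=\bigsqcup_{\eta\in\Gamma}D^n(\eta)\,D^n[V]$. Given $x$, I would pick the unique $\eta\in\Gamma$ with $x\in D^n(\eta)D^n[V]$, so that $D^n(\eta)^{-1}x=D^n(v)$ for some $v\in V$, and then compute $D^n(\eta)^{-1}x\,B(e,r)=D^n\big[v\,B(e,r\lambda_0^{-n})\big]$ using $B(e,r)=D^n[B(e,r\lambda_0^{-n})]$. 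Here the threshold enters: for $n\geq N_r(T)=\log(rC_T)/\log\lambda_0$ with $C_T=1/\delta$ one has $r\lambda_0^{-n}\leq\delta$, so $v\,B(e,r\lambda_0^{-n})\subseteq V\cdot B(e,\delta)\subseteq B(e,C_-\lambda_0^{s_2})$ by part~(a); applying $D^n$ gives $D^n(\eta)^{-1}x\,B(e,r)\subseteq B(e,C_-\lambda_0^{n+s_2})$. Finally I would set $\gamma:=\eta\,(D^{s_2}(z))^{-1}$, which lies in $\Gamma$ because $D[\Gamma]\subseteq\Gamma$ forces $D^{s_2}(z)\in\Gamma$; since $D^n$ is a homomorphism one checks $D^n(\gamma)D^{n+s_2}(z)=D^n(\eta)$, whence $xB(e,r)\subseteq D^n(\eta)\,B(e,C_-\lambda_0^{n+s_2})\subseteq D^n(\gamma)V(n,T)$. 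This is precisely Definition~\ref{def:TestingTuple}~\eqref{enu:TestingDomain}.

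The remaining bookkeeping is light: the testing-domain property extends from $r\geq1$ to all $r>0$ by monotonicity (a translate covering $xB(e,r)$ also covers $xB(e,r')$ for $r'\leq r$), and existence of $N_T$ as in Definition~\ref{def:TestingTuple}~\eqref{enu:N_T}, hence of a genuine testing tuple, follows by applying the just-proved property to a radius large enough that $xT\subseteq xB(e,r)$, which is possible since $T$ is finite. I expect the one delicate point to be tracking the center $z$: the large ball sits in $V(n,T)$ centered at $D^{n+s_2}(z)$ rather than at a lattice point of $D^n[\Gamma]$, and the choice $\gamma=\eta(D^{s_2}(z))^{-1}$ that absorbs this shift, together with verifying $\gamma\in\Gamma$, is the step needing care; everything else is metric scaling under $D^n$ combined with the fundamental-domain tiling, exactly parallel to the concrete computation in Proposition~\ref{prop:test-euclid}.
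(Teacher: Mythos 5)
Your argument is correct and follows essentially the same route as the paper: part (a) via relative compactness of $V\cdot B(e,\delta)$, and part (b) by locating $D^{-n}(x)$ in a translate of the fundamental domain, rescaling by $D^n$ so that the radius-$r$ ball shrinks to radius $r\lambda_0^{-n}\leq\delta$, and absorbing the center shift $D^{s_2}(z)$ into the lattice element $\gamma$ — your choice $\gamma=\eta(D^{s_2}(z))^{-1}$ coincides with the paper's $\gamma$ determined by $D^{-n}(x)\in\gamma z_0 V$ with $z_0=D^{s_2}(z)$. The final reduction $V(n,T)=V(n+s_1+s_2)$ via Lemma~\ref{lem:SupportGrowth}~\eqref{enu:V(n,V(m,M))} and the bookkeeping for $r<1$ and for $N_T$ also match the paper's treatment.
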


\begin{proof}
Recall that if we prove that there is a testing domain $T$ for the given dilation datum and $\lambda_0$, then there exists an integer $N_T$ satisfying Definition~\ref{def:TestingTuple}~\eqref{enu:N_T}. Thus, if we show that $T$ is a testing domain, it follows that $(T,N_T)$ is a testing tuple for $\Dd$ and $\lambda_0$. 

Since $V \cdot B(e,\delta)$ is relatively compact and $\lambda_0>1$, there exists an $s_2\in\N$ such that $V \cdot B(e,\delta) \subseteq B\big( e , C_-\lambda_0^{s_2}\big)$, which is possible as $\lambda_0>1$. We now show that for any such $s_2\in\N$, $T:=V(s_1+s_2)\cap\Gamma$ defines a testing domain. 

Let $x\in\Gamma$, $r\geq 1$ and set $N_r(T):= \frac{\log(rC_T )}{\log(\lambda_0)}$ with $C_T:=\frac{1}{\delta}$. 
Let $n\geq N_r(T)$.  
Since $\lambda_0$ is sufficiently large relative to $V$ with respect to $C_- > 0$, $s_1\in\N_0$ and $z\in\Gamma$, we have $D^n\big[B(z,C_-)\big]\subseteq V(s_1+n)$. 
Set $s_0:=s_1+s_2$ and $z_0:=D^{s_2}(z)\in\Gamma$. Then $D^n\big[B(z_0,C_-\lambda_0^{s_2})\big]\subseteq V(s_0+n)$ follows from the previous considerations. Since $z_0\in \Gamma$ and $V$ is a fundamental domain of $\Gamma$, we conclude that $\Gamma \cdot (z_0 V)=G$. Thus, there exists a $\gamma \in \Gamma$ such that $D^{-n}(x)\in \gamma z_0  V$. Then the choice of $s_2$ implies
$$
D^{-n}(x) B(e,\delta) 
	\subseteq \gamma z_0 V \cdot B(e,\delta) 
	\subseteq  \gamma  z_0 B\big( e , C_-\lambda_0^{s_2} \big).
$$
Since $n\geq N_r(T)$, we conclude $\delta \lambda_0^n \geq r$. Using that $D$ is a group automorphism, we derive that
\begin{align*}
x B(e,r) 
	&\subseteq x  B(e,\delta \lambda_0^n) 
	= D^n \Big[ D^{-n}(x) B(e,\delta) \Big]
	\subseteq D^n \Big[\gamma  z_0 B\big( e , C_- \lambda_0^{s_2} \big) \Big] \\
	 	&= D^n(\gamma) D^n\big[B(z_0,C_-\lambda_0^{s_2})\big] \subseteq D^n(\gamma) V( s_0+n ) 
 =	D^n(\gamma) V\big( n, V(s_0)\cap \Gamma \big),
\end{align*}
where in the last step we used  Lemma~\ref{lem:SupportGrowth}~\eqref{enu:V(n,V(m,M))}.
\end{proof}

\begin{remark}
We note that improving $N_r(T)$ in the previous statement (i.e.\@ making it smaller) for a different testing domain may improve the estimate in Theorem~\ref{thm:upper-bound-estim}.
\end{remark}

\subsection{An upper bound on the rate of convergence} 
\label{subsec:Rate_Converg}

Note that Proposition~\ref{prop:TestingTuple_Existence} provides an explicit growth behavior of the map $r\mapsto N_r(T)$ associated with the testing domain $T=V(s)\cap \Gamma$ for a suitable $s\in\N$. This provides a quantitative version of Lemma~\ref{lem:Patches_event_Legal}.
The concept of linear repetitivity allows us to prove also a quantitative version of Lemma~\ref{lem:LegalPatches_Appear} leading to Theorem~\ref{thm:upper-bound-estim}.
Recall that primitivity of the substitution rule yields that the associated subshift $\Omega(S)$ is linearly repetitive, see Theorem~\ref{thm:Omega(S)}.

\begin{lemma}
\label{lem:LegalPatches_Appear_Quantit}
Let $\Ss=(\Aa,\lambda_0,S_0)$ be a primitive substitution datum over $\Dd=\big( G,d, (D_\lambda)_{\lambda>0}, \Gamma , V  \big)$ with associated substitution map $S$. Let $C_->0$ and $s\in\N_0$ be such that $\lambda_0>1$ is sufficiently large relative to $V$ with respect to $C_-$ and $s$. Denote the linear repetitivity constant of $\Omega(S)$ by $\CLR\geq 1$. If $r\geq 1$, then we have the implication
$$
n\geq L_r:=\frac{\log\left(\frac{\CLR}{C_-}\cdot  r\right)}{\log(\lambda_0)} + s
	\qquad\Longrightarrow\qquad
W(S)_{B(e,r)}\subseteq W(S^n(\omega_0)),\qquad \omega_0\in\Aa^\Gamma.
$$
\end{lemma}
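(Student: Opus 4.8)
The plan is to show that, for \emph{any} initial configuration $\omega_0$, a single letter already generates—after $n$ substitution steps—a legal patch supported on a huge ball, and then to let linear repetitivity populate this ball with every legal $r$-patch. Concretely, since $\omega_0\in\Aa^\Gamma$ takes at least one value, fix the letter $b:=\omega_0(e)$, so that $b\prec\omega_0$ and hence $S^n(b)\prec S^n(\omega_0)$. By Proposition~\ref{prop:SubstitutionMap}~\eqref{enu:support_Sn_M} the patch $S^n(b)$ has support $V(n)\cap\Gamma$, and being of the form $S^n(a)$ it is $S$-legal. The whole argument then reduces to locating a large ball inside $V(n)$ and transporting the abstract repetitivity of $\Omega(S)$ onto $S^n(b)$.

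For the ball, I would invoke that $\lambda_0$ is sufficiently large relative to $V$: for $n\geq s+1$ (which $n\geq L_r$ forces in the relevant range, using that $n$ is an integer and $L_r>s$) one has $D_{\lambda_0}^{\,n-s}\big[B(z,C_-)\big]\subseteq V(s+(n-s))=V(n)$. Property~(D2) of the dilation datum rescales balls exactly, $d(D_{\lambda_0}g,D_{\lambda_0}h)=\lambda_0\,d(g,h)$, so
\[
D_{\lambda_0}^{\,n-s}\big[B(z,C_-)\big]=B\big(w,R\big),\qquad w:=D_{\lambda_0}^{\,n-s}(z)\in\Gamma,\quad R:=\lambda_0^{\,n-s}C_-.
\]
Thus $B(w,R)\cap\Gamma\subseteq\supp\big(S^n(b)\big)$, and the restriction $P:=S^n(b)\vert_{B(w,R)\cap\Gamma}$ is a subpatch of the legal patch $S^n(b)$, hence $P\in W(S)$. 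Since $S$ is primitive, Theorem~\ref{thm:Omega(S)} gives $W(S)=W(\Omega(S))$; because $W(\Omega(S))$ is invariant under the $\Gamma$-action and left translation by $w^{-1}$ sends $B(w,R)$ to $B(e,R)$, the translate $w^{-1}P$ belongs to $W(\Omega(S))_{B(e,R)}$.

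It remains to feed this into linear repetitivity. The defining inequality $n\geq L_r$ rearranges to $\lambda_0^{\,n-s}\geq \CLR r/C_-$, i.e.\ $R=\lambda_0^{\,n-s}C_-\geq \CLR\,r$, which is exactly the hypothesis $R\geq\CLR r$ of Definition~\ref{def:lin-rep}. Therefore every $Q\in W(\Omega(S))_{B(e,r)}$ satisfies $Q\prec w^{-1}P$, and hence $Q\prec P\prec S^n(b)\prec S^n(\omega_0)$, so $Q\in W(S^n(\omega_0))$. As $W(S)_{B(e,r)}=W(\Omega(S))_{B(e,r)}$, this is precisely the claimed inclusion $W(S)_{B(e,r)}\subseteq W(S^n(\omega_0))$. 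The main obstacle I anticipate is bookkeeping the geometry rather than any deep idea: one must track that the large ball produced by the ``sufficiently large'' condition is centred at $w=D_{\lambda_0}^{\,n-s}(z)$ rather than at $e$, and use translation invariance of $W(\Omega(S))$ to align it with the $e$-centred balls in the definition of linear repetitivity; one must also verify the exponent bookkeeping $R=\lambda_0^{\,n-s}C_-$ and dispatch the boundary regime $n\leq s$ (which is either empty or immediate). Everything else is a direct chain of the subpatch relation $\prec$.
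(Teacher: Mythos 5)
Your proof is correct and follows essentially the same route as the paper's: take the letter $a=\omega_0(e)$, use the ``sufficiently large'' condition to locate a ball $B\big(D_{\lambda_0}^{n-s}(z),\lambda_0^{n-s}C_-\big)$ inside $V(n)=\supp\big(S^n(a)\big)\cdot V$, note that $n\geq L_r$ gives radius at least $\CLR r$, and invoke linear repetitivity to conclude every legal $r$-patch occurs in $S^n(a)\prec S^n(\omega_0)$. Your extra care about recentering the ball at $e$ via left translation is a detail the paper leaves implicit, but it is the same argument.
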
 
	
\begin{proof}
Let $n\geq L_r$, $\omega_0\in\Aa^\Gamma$ and set $a:=\omega_0(e)$ and $m:=n-s$. Note that the following considerations do not depend on the specific letter $a$. Since $\lambda_0$ is sufficiently large relative to $V$ with respect to the constant $C_->0$ and $s$, there is a $z\in\Gamma$ with 
$$
B\big(D^m(z), C_- \lambda_0^m\big)
	= D^m\big[B(z,C_-)\big]
	\subseteq V(m+s)
	= V(n).
$$
By Proposition~\ref{prop:SubstitutionMap}~\eqref{enu:support_Sn_M}, $V(n)\cap\Gamma$ is the support of $S^n(a)$. 
Since $n\geq L_r$, we conclude $m\log(\lambda_0)\geq \log\left(\frac{\CLR r}{C_-}\right)$ implying $\lambda_0^m C_-\geq \CLR r$. Thus, $B\big(D^m(z),\CLR r\big)\subseteq V(n)$ follows and so $$Q\prec S^n(a)|_{B(D^m(z),\CLR r)}$$ 
holds for all $Q\in W(S)_{B(e,r)}$ since $\CLR$ is the linear repetititivity constant of $\Omega(S)$.
Hence, $W(S)_{B(e,r)}\subseteq W(S^n(\omega_0))$ follows using $a:=\omega_0(e)$.
\end{proof}

Next we show explicit upper bounds on the convergence rates of the subshifts.

\begin{proposition}
\label{prop:ConvergenceRate}
Consider a dilation datum $\Dd=\big( G,d, (D_\lambda)_{\lambda>0}, \Gamma , V  \big)$ and a primitive substitution datum $\Ss=(\Aa,\lambda_0, S_0)$. Let $S$ be the associated substitution map. Suppose
\begin{itemize}
\item $n_0\in\N_0$ is chosen such that $W(S^{n_0}(\omega_0))_{T} \subseteq W(S)$;
\item there is a testing domain $T$, a constant $C_T>0$ such that for $r\geq 1$, $N_r(T):=\frac{\log(rC_T )}{\log(\lambda_0)}$ satisfies Definition~\ref{def:TestingTuple}~\eqref{enu:TestingDomain};
\item $\CLR\geq 1$ is the linear repetitivity constant of $\Omega(S)$;
\item $C_->0$ and $s\in\N_0$ are chosen such that $\lambda_0$ is sufficiently large relative to $V$ with respect to $C_-$ and $s$.
\end{itemize}
Then  
\[ 
\delta_H\big( \overline{\Orb(S^n(\omega_0))}, \Omega(S) \big)
	< C \frac{1}{\lambda_0^n},
	\qquad n> M_1,
\]
holds where $C:=\max\left\{ \frac{\CLR}{C_-}\lambda_0^s,\,  C_T \lambda_0^{n_0} \right\}$ and $M_1:=\frac{\log(C)}{\log(\lambda_0)}$.
\end{proposition}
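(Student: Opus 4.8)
The plan is to bound $\delta_H$ through the dictionary description in equation~\eqref{eq:HausdorffMetric_Dictionaries}. Both sets lie in $\Jj$, and one has $W\big(\overline{\Orb(S^n(\omega_0))}\big)=W(S^n(\omega_0))$ (finite patches are unaffected by passing to the orbit closure), while $W(\Omega(S))=W(S)$ by Theorem~\ref{thm:Omega(S)} using primitivity. Hence it suffices to produce, for each $n>M_1$, a radius $r_n\geq 1$ such that
\[
W(S^n(\omega_0))_{B(e,r_n)} = W(S)_{B(e,r_n)}.
\]
Equation~\eqref{eq:HausdorffMetric_Dictionaries} then gives $\delta_H\leq \frac{1}{r_n+1}<\frac{1}{r_n}$, so I aim for $r_n$ as large as $\lambda_0^n/C$. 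I will establish the two inclusions separately, each valid up to a radius that is exponential in $n$.

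For the inclusion ``$\subseteq$'' (every bounded-support patch of $S^n(\omega_0)$ is legal), I would apply the quantitative legality Lemma~\ref{lem:Patches_event_Legal} to the configuration $\tilde\omega_0:=S^{n_0}(\omega_0)$, which satisfies $W(\tilde\omega_0)_T\subseteq W(S)$ by hypothesis. Since $S^m(\tilde\omega_0)=S^{n_0+m}(\omega_0)$, writing $n=n_0+m$ yields $W(S^n(\omega_0))_{B(e,r)}\subseteq W(S)_{B(e,r)}$ whenever $n\geq n_0+N_r(T)$. Substituting $N_r(T)=\frac{\log(rC_T)}{\log(\lambda_0)}$ and solving this threshold for $r$ shows the inclusion holds for all $1\leq r\leq \lambda_0^{n}/(C_T\lambda_0^{n_0})$.

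For the reverse inclusion ``$\supseteq$'' (all legal bounded-support patches already occur in $S^n(\omega_0)$), I would invoke Lemma~\ref{lem:LegalPatches_Appear_Quantit}, which holds for arbitrary $\omega_0\in\Aa^\Gamma$ and exploits the linear repetitivity of $\Omega(S)$. It gives $W(S)_{B(e,r)}\subseteq W(S^n(\omega_0))_{B(e,r)}$ as soon as $n\geq L_r=\frac{\log\big((\CLR/C_-)\,r\big)}{\log(\lambda_0)}+s$; inverting this threshold for $r$ yields the inclusion for all $1\leq r\leq \lambda_0^{n}/\big((\CLR/C_-)\lambda_0^{s}\big)$.

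Finally I would set $r_n:=\lambda_0^n/C$ with $C=\max\{(\CLR/C_-)\lambda_0^{s},\,C_T\lambda_0^{n_0}\}$. The only genuine bookkeeping point—and the main obstacle—is verifying that this single choice of $C$ is precisely what forces $r_n$ to lie below both radius thresholds above simultaneously, so that both inclusions hold at $r=r_n$ and the two dictionaries agree on $B(e,r_n)$; this reduces to the two inequalities $C\geq C_T\lambda_0^{n_0}$ and $C\geq(\CLR/C_-)\lambda_0^{s}$, both true by definition of $C$. For $n>M_1=\frac{\log(C)}{\log(\lambda_0)}$ one has $\lambda_0^n>C$, hence $r_n>1$, so the hypotheses $r\geq 1$ of both lemmas are met. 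Then equation~\eqref{eq:HausdorffMetric_Dictionaries} gives
\[
\delta_H\big(\overline{\Orb(S^n(\omega_0))},\Omega(S)\big)\leq \frac{1}{r_n+1}<\frac{1}{r_n}=\frac{C}{\lambda_0^n},
\]
as claimed.
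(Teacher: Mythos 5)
Your proposal is correct and follows essentially the same route as the paper: both arguments combine Lemma~\ref{lem:Patches_event_Legal} (applied to $S^{n_0}(\omega_0)$) with Lemma~\ref{lem:LegalPatches_Appear_Quantit} and then translate the resulting radius thresholds into the bound via equation~\eqref{eq:HausdorffMetric_Dictionaries}. The only cosmetic difference is that the paper defines $M_r$ as the maximum of the two thresholds and solves $n=M_{r_n}$ for $r_n$, whereas you pick $r_n=\lambda_0^n/C$ directly and verify it sits below both thresholds—the same computation in a slightly different order.
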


\begin{proof}
For $r\geq 1$, define 
$$
M_r:= \max\left\{ \frac{\log\left(\frac{\CLR}{C_-} \cdot r\right)}{\log(\lambda_0)} + s,\, \frac{\log(rC_T  )}{\log(\lambda_0)}+n_0 \right\} 
	= \max\left\{ \frac{\log\left(\frac{\CLR}{C_-} \cdot r\right)}{\log(\lambda_0)} + s,\, N_r(T)+n_0 \right\}. 
$$
Since $W(S^{n_0}(\omega_0))_{T} \subseteq W(S)$, Lemma~\ref{lem:Patches_event_Legal} (applied to $S^{n_0}(\omega_0)$) and Lemma~\ref{lem:LegalPatches_Appear_Quantit} imply 
$$
W(S^n(\omega_0))_{B(e,r)}=W(S)_{B(e,r)},\qquad n\geq M_r.
$$
Thus, equation~\eqref{eq:HausdorffMetric_Dictionaries} leads to
$$
\delta_H\big(\overline{\Orb(S^n(\omega_0))},\Omega(S)\big) \leq \frac{1}{r+1},\qquad n\geq M_r.
$$
Since $M_r$ is continuous and monotonically increasing to infinity, for each $n> M_1$, there is an $r_n> 1$ such that $n=M_{r_n}$. 
We have two cases for $n$ since $M_{r_n}$ is defined by a maximum of two terms. A short computation yields 
$$
r_n=\frac{C_-}{\CLR \lambda_0^s}\lambda_0^n
\qquad\textrm{or}\qquad
r_n=\frac{1}{C_T\lambda_0^{n_0}} \lambda_0^n.
$$

Combining these computations with the previous considerations implies 
$$
\delta_H\big(\overline{\Orb(S^n(\omega_0))},\Omega(S)\big)
	\leq \frac{1}{r_n+1}
	< \frac{1}{r_n}
	\leq \max\left\{ \frac{\CLR}{C_-} \lambda_0^s, C_T \lambda_0^{n_0}\right\} \frac{1}{\lambda_0^n},
\qquad
n\geq M_1.
$$
\end{proof}

\begin{proof}[Proof of Theorem~\ref{thm:upper-bound-estim}]
Recall the assumptions in the statement. Let $S$ be a substitution map of a primitive substitution with dilation datum $\Dd=(G,d,(D_\lambda)_{\lambda>0},\Gamma,V)$ and associated stretch factor $\lambda_0>1$ that is sufficiently large relative to $V$ with respect to a constant $C_->0$ and $s_1\in\N_0$. 
Fix $\delta>0$ and choose $s_2\in\N$ such that $V \cdot B(e,\delta) \subseteq B\big( e ,\lambda_0^{s_2} C_-\big)$, as in Proposition~\ref{prop:TestingTuple_Existence}.
Then $T:=V(s_1+s_2)\cap\Gamma$ is a testing domain such that for $r\geq 1$, $N_r(T):=\frac{\log(rC_T )}{\log(\lambda_0)}$ with $C_T:=\frac{1}{\delta}$ satisfies Definition~\ref{def:TestingTuple}~\eqref{enu:TestingDomain}, see Proposition~\ref{prop:TestingTuple_Existence}.
Primitivity of the substitution implies that $\Omega(S)$ is linearly repetitive with linear repetitivity constant $\CLR\geq 1$, see Theorem~\ref{thm:Omega(S)}. 

Set $n_0:=|\Aa^T|\cdot N_T$. Let $\omega_0\in\Aa^\Gamma$ be such that it satisfies one of the equivalent conditions in Theorem~\ref{thm:charact_convergence}. Thus, Theorem~\ref{thm:charact_convergence} asserts that any path in $G_S(T,N_T)$ starting in a vertex $W(\omega_0)_T\subseteq \Aa^T$ has length strictly less than $|\Aa^T|$. Thus, Lemma~\ref{lem:paths-ileg} yields $W(S^{n_0}(\omega_0))_{T} \subseteq W(S)$.
Now the desired claim follows from Proposition~\ref{prop:ConvergenceRate}. Note that $n_0$ does not depend on $\omega_0$.
\end{proof}

In the case of block substitutions, we can estimate the constants more explicitly using Proposition~\ref{prop:test-euclid} instead of Proposition~\ref{prop:TestingTuple_Existence}. Here we take of advantage that the testing domain is small and can be explicitly computed.

\begin{cor}
\label{cor:RateConv_Zd}
Let $\Aa$ be a finite set and $\mv=(m_1,\ldots,m_d)\in\N^d$ with $m_j>1$ for all $1\leq j\leq d$ and $S_0:\Aa\to\Aa^{\Kmv}$ be a primitive block substitution with $\lambda_0:=\min\{m_j\,|\, 1\leq j\leq d\}\geq 2$. Let $S$ be the associated substitution map of the block substitution and $T=\{0,1\}^d$. 

If $\omega_0\in\Aa^\Gamma$ satisfies $W(\omega_0)_{T}\subseteq W(S)$, then 
$$
\delta_H\big(\overline{\Orb(S^n(\omega_0))},\Omega(S)\big)
	\leq \frac{C}{\lambda_0^n}
	,\qquad n\geq \frac{\log(C)}{\log(\lambda_0)}
$$
where 
$$
C:= \frac{\CLR}{2\lambda_0-3}\lambda_0^4
$$
\end{cor}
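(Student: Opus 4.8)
The plan is to read off every constant appearing in the general convergence estimate of Proposition~\ref{prop:ConvergenceRate} from the explicit block-substitution data supplied by Proposition~\ref{prop:blockSubst_DilationDatum} and Proposition~\ref{prop:test-euclid}, and then to simplify the resulting maximum. No new ideas are needed; the corollary is a bookkeeping specialization of the general estimate, exploiting that the block setting gives small, explicitly computable constants.

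First I would record the four ingredients that Proposition~\ref{prop:ConvergenceRate} requires. By Proposition~\ref{prop:blockSubst_DilationDatum}, the block substitution sits in the general framework with fundamental domain $V=\big[-\tfrac{1}{2},\tfrac{1}{2}\big)^d$, and $\lambda_0=\min_j m_j\geq 2$ is sufficiently large relative to $V$ with $C_-=2\lambda_0-3$ and $s=4$. By Proposition~\ref{prop:test-euclid}, the set $T=\{0,1\}^d$ is a testing domain with $N_r(T)=\frac{\log(2\lambda_0 r)}{\log(\lambda_0)}$ for $r\geq 1$; comparing with the form $N_r(T)=\frac{\log(rC_T)}{\log(\lambda_0)}$ demanded in Proposition~\ref{prop:ConvergenceRate} gives $C_T=2\lambda_0$. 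The linear repetitivity constant $\CLR\geq 1$ of $\Omega(S)$ exists because $S$ is primitive (Theorem~\ref{thm:Omega(S)}). Finally, since $W(\omega_0)_T\subseteq W(S)$ is precisely the hypothesis and $S^0(\omega_0)=\omega_0$, the choice $n_0=0$ satisfies the requirement $W(S^{n_0}(\omega_0))_T\subseteq W(S)$.

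Next I would substitute these into Proposition~\ref{prop:ConvergenceRate}, obtaining
\[
\delta_H\big(\overline{\Orb(S^n(\omega_0))},\Omega(S)\big)<\frac{C'}{\lambda_0^n},\qquad n>M_1,
\]
with $C'=\max\big\{\tfrac{\CLR}{2\lambda_0-3}\lambda_0^4,\ 2\lambda_0\big\}$ and $M_1=\frac{\log(C')}{\log(\lambda_0)}$. The only genuine computation is to verify that the first term of the maximum dominates, so that $C'$ collapses to the claimed $C=\frac{\CLR}{2\lambda_0-3}\lambda_0^4$. Using $\CLR\geq 1$, it suffices to show $\lambda_0^4\geq 2\lambda_0(2\lambda_0-3)$, i.e.\ $\lambda_0^3\geq 4\lambda_0-6$; this holds with room to spare at $\lambda_0=2$ (where $8\geq 2$), and the derivative $3\lambda_0^2-4$ is positive for $\lambda_0\geq 2$, so the cubic stays above the line. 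Hence $C'=C$ and $M_1=\frac{\log C}{\log\lambda_0}$.

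It remains to upgrade the strict bound valid for $n>M_1$ to the inequality $\leq$ asserted at $n\geq M_1$. At the endpoint $n=M_1$ one has $C/\lambda_0^{M_1}=1$ by the definition of $M_1$, while $\delta_H\leq 1$ holds unconditionally, since by equation~\eqref{eq:HausdorffMetric_Dictionaries} any two subshifts have matching dictionaries at radius $r=0$. Combining this endpoint case with the strict estimate for $n>M_1$ yields $\delta_H\big(\overline{\Orb(S^n(\omega_0))},\Omega(S)\big)\leq C/\lambda_0^n$ for all $n\geq\frac{\log C}{\log\lambda_0}$, as claimed. I do not expect a serious obstacle: the argument is a direct specialization, and the only points needing care are confirming which term realizes the maximum and treating the boundary value $n=M_1$.
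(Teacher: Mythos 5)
Your proposal is correct and follows essentially the same route as the paper: plug the explicit constants from Propositions~\ref{prop:blockSubst_DilationDatum} and~\ref{prop:test-euclid} (with $n_0=0$, $C_T=2\lambda_0$, $C_-=2\lambda_0-3$, $s=4$) into Proposition~\ref{prop:ConvergenceRate} and observe that the maximum is attained by the first term since $\CLR\geq 1$ and $\lambda_0\geq 2$. Your explicit verification of the dominance of that term and your treatment of the endpoint $n=M_1$ merely spell out details the paper leaves implicit.
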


\begin{proof}
Using Proposition~\ref{prop:blockSubst_DilationDatum},  $\Dd:=\big(\R^d,d_{\mv},(D_\lambda)_{\lambda>0},\Z^d,\big[-\frac{1}{2},\frac{1}{2}\big)^d\big)$ is a dilation datum and $\lambda_0$ is sufficiently large relative to $V$ with respect to the constant $C_-= 2\lambda_0-3$ and $s=4$.
By assumption we have $W\big(S^{n_0}(\omega_0)\big)_{T}\subseteq W(S)$ for $n_0=0$. Furthermore, Proposition~\ref{prop:test-euclid} asserts that $(T,1)$ is a testing tuple with $T=\{0,1\}^d$ and for each $r\geq 1$, $N_r(T):= \frac{\log(2\lambda_0r)}{\log(\lambda_0)}$ satisfies Definition~\ref{def:TestingTuple}~\eqref{enu:TestingDomain}. Hence, Proposition~\ref{prop:ConvergenceRate} with $C_T=2\lambda_0$ and $n_0=0$ implies the desired claim using $\CLR\geq 1$ and $\lambda_0\geq 2$.
\end{proof}

\subsection{A lower bound on the rate of convergence}
\label{subsec:LowerBounds}

In this subsection, we will prove a lower bound on the rate of convergence of $\delta_H\big(\overline{\Orb(S^n(\omega_0))},\Omega(S)\big)$ if $\omega_0$ is periodic. To this end, we need the concept of lower box counting dimension of a subshift $\Omega$. If $\Omega \in \Jj$ is a subshift, then the {\em lower box counting dimension} is defined by
$$
\lodim(\Omega) 
	:= \liminf_{r\to\infty} \frac{\log\big( \vert W(\Omega)_{B(e,r)}\vert \big)}{\log(r)}
	= \liminf_{r\to\infty} \frac{\log\big( \vert W(\Omega)_{B(e,r)}\vert \big)}{-\log(\frac{1}{2(r+1)})}.
$$

Let us shortly explain why we call this the lower box counting dimension. 
Recall that $\Aa^\Gamma$ is a totally disconnected compact metric space with metric $\dc$ defined in equation~\eqref{eq:config-metric}. In order to define the box counting dimension, one counts the number of sets of diameter at most $\frac{1}{2(r+1)}$ that are needed to cover the set $\Omega$. By the choice of the metric this is exactly the number of different patches with support $B(e,r)$ that $\Omega$ admits, namely the patch counting function $\vert W(\Omega)_{B(e,r)}\vert$. 
One can show that for a linearly repetitive subshift $\Omega$, the patch counting function is at most of the order  $r^\kappa$ where $\kappa$ is the homogeneous dimension defined in Section~\ref{subsec:Substitutions_general}. This shows that the box counting dimension of $\Omega(S)$ is at most $\kappa$. 
We point out at this point that $\lodim(\Omega)$ is also called {\em lower power entropy}, see e.g. \cite[Section 2.5]{Pet16}. 
We note that due to the choice of metric in equation~\eqref{eq:config-metric}, this is not the standard box dimension one considers on subshifts. In fact, the standard Hausdorff dimension of $\Omega(S)$ on $\mathbb{Z}^d$ is $0$, since the entropy of a subshift is the same as its Hausdorff dimension, cf. \cite[Theorem 4.2]{Sim14} or \cite[Proposition III.1]{Fur69}.

\begin{proposition} \label{prop:lower-bound-estim}
Let $S$ be the substitution map of a substitution on $\Aa^\Gamma$ with dilation datum $\Dd=\big( G,d, (D_\lambda)_{\lambda>0}, \Gamma , V  \big)$ and primitive substitution datum $\Ss=(\Aa,\lambda_0, S_0)$. Let $\kappa>0$ be the homogeneous dimension of $\Gamma$.
If $\lodim(\Omega(S))>0$, then for each periodic $\omega_0\in\Aa^\Gamma$, there exists a constant $C(\omega_0)>0$ such that
\[ 
\delta_H \big( \Orb(S^n(\omega_0)), \Omega(S)\big) 
	\geq  C(\omega_0) \left(\frac{1}{\lambda_0^n}\right)^{\frac{\kappa}{\lodim(\Omega(S))}}  \quad \mbox{ for all } n \in \N.
\]
\end{proposition}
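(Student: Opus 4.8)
The plan is to reduce the Hausdorff distance to a single geometric quantity -- the largest radius up to which the two dictionaries coincide -- and then to play an upper bound on the complexity of the periodic approximant against the lower bound on the complexity of $\Omega(S)$ encoded in $\lodim$. Write $\Omega_n:=\overline{\Orb(S^n(\omega_0))}$. Since $\omega_0$ is periodic, Proposition~\ref{prop:Periodic_Sn} shows $S^n(\omega_0)$ is periodic, so $\Omega_n=\Orb(S^n(\omega_0))$ is finite. Because dictionary agreement is monotone in $r$ (a patch on $B(e,r')$ restricts to one on $B(e,r)$ for $r\le r'$), formula~\eqref{eq:HausdorffMetric_Dictionaries} gives $\delta_H(\Omega_n,\Omega(S))=\frac{1}{r_n+1}$, where $r_n:=\sup\{r\ge 0: W(\Omega_n)_{B(e,r)}=W(\Omega(S))_{B(e,r)}\}$. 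Here $r_n<\infty$, since $\lodim(\Omega(S))>0$ forces $\Omega(S)$ to carry infinitely many patches, hence to be infinite and distinct from the finite $\Omega_n$. Thus it suffices to bound $r_n$ from above by a suitable power of $\lambda_0^n$.

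First I would bound the complexity of $\Omega_n$. Every element of $W(\Omega_n)_{B(e,r)}$ is the restriction to $B(e,r)$ of some configuration in the finite, $\Gamma$-invariant set $\Omega_n$, so $|W(\Omega_n)_{B(e,r)}|\le |\Omega_n|$ for all $r$. By Proposition~\ref{prop:Periodic_Sn}, $|\Omega_n|\le |D^n[V]\cap\Gamma|\cdot|M|$, where $M$ indexes $\Orb(\omega_0)$. As $V$ is relatively compact we have $V\subseteq B(e,r_+)$, and $D_{\lambda_0}^n$ scales the metric by $\lambda_0^n$, so $D^n[V]\cap\Gamma\subseteq B^{\Gamma}(\lambda_0^n r_+)$. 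The exact polynomial growth of $\Gamma$ with homogeneous dimension $\kappa$ (Section~\ref{subsec:Substitutions_general}) then yields $|\Omega_n|\le C_2\,\lambda_0^{n\kappa}$ for a constant $C_2=C_2(\omega_0)$.

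Next, evaluating at the matching radius $r_n$ gives $|W(\Omega(S))_{B(e,r_n)}|=|W(\Omega_n)_{B(e,r_n)}|\le C_2\,\lambda_0^{n\kappa}$. On the other hand, setting $d_0:=\lodim(\Omega(S))>0$, the definition of the lower box dimension provides, for each fixed $\epsilon\in(0,d_0)$, a threshold $R_\epsilon$ with $|W(\Omega(S))_{B(e,r)}|\ge r^{\,d_0-\epsilon}$ for all $r\ge R_\epsilon$. For those $n$ with $r_n\ge R_\epsilon$ this forces $r_n^{\,d_0-\epsilon}\le C_2\lambda_0^{n\kappa}$, hence $r_n\le C_2^{1/(d_0-\epsilon)}\lambda_0^{\,n\kappa/(d_0-\epsilon)}$ and $\delta_H(\Omega_n,\Omega(S))\ge \tfrac12 C_2^{-1/(d_0-\epsilon)}\lambda_0^{-n\kappa/(d_0-\epsilon)}$. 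The remaining $n$ with $r_n<R_\epsilon$ satisfy $\delta_H\ge\frac{1}{R_\epsilon+1}$, which dominates the claimed right-hand side (the latter tending to $0$), so they are absorbed into the constant $C(\omega_0)$.

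The main obstacle is the passage from the exponent $\kappa/(d_0-\epsilon)$, which the argument above produces for every $\epsilon>0$, to the sharp exponent $\kappa/d_0=\kappa/\lodim(\Omega(S))$ in the statement. The $\liminf$ in the definition of $\lodim$ only guarantees $|W(\Omega(S))_{B(e,r)}|\ge r^{\,d_0-\epsilon}$ for large $r$, so a naive single-radius estimate loses an $\epsilon$ in the exponent. I expect the sharp exponent to be recovered by running the comparison along the inflation scales $r\asymp\lambda_0^{\,m}$ and exploiting that primitivity makes $\Omega(S)$ linearly repetitive (Theorem~\ref{thm:Omega(S)}), which constrains the growth of $r\mapsto|W(\Omega(S))_{B(e,r)}|$ tightly enough to realize the $\liminf$ at these scales; alternatively one reads the conclusion as the asymptotic rate bound
\[
\limsup_{n\to\infty}\frac{-\log\delta_H\big(\Omega_n,\Omega(S)\big)}{n\log\lambda_0}\le \frac{\kappa}{\lodim(\Omega(S))},
\]
which the estimate above already delivers upon letting $\epsilon\to0$. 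This exponent-sharpening is the only delicate point; the complexity bounds themselves are routine given Proposition~\ref{prop:Periodic_Sn} and the polynomial growth of $\Gamma$.
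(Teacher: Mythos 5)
Your argument follows the paper's proof almost step for step: bound $|\Orb(S^n(\omega_0))|$ by $C\,\lambda_0^{n\kappa}$ using Proposition~\ref{prop:Periodic_Sn}, the inclusion $D^n[V]\cap\Gamma\subseteq B(e,r_+\lambda_0^n)$ and exact polynomial growth; observe that $\delta_H\big(\Orb(S^n(\omega_0)),\Omega(S)\big)<\tfrac{1}{r+1}$ forces $W(S^n(\omega_0))_{B(e,r)}=W(S)_{B(e,r)}$ and hence $|W(S)_{B(e,r)}|\le|\Orb(S^n(\omega_0))|$; then invert a power-law lower bound on the patch-counting function to bound $r$. The decomposition, both complexity estimates, and the final inversion are identical to the paper's.

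The one point where you stop short is precisely where the paper is most terse. You correctly note that the $\liminf$ defining $d_0:=\lodim(\Omega(S))$ only yields $|W(S)_{B(e,r)}|\ge r^{d_0-\epsilon}$ for large $r$, which produces the exponent $\kappa/(d_0-\epsilon)$, and you do not close the gap to $\kappa/d_0$: your two suggested repairs (realizing the $\liminf$ along inflation scales via linear repetitivity, or weakening the conclusion to an asymptotic rate bound) are left as speculation, and the second proves a strictly weaker statement than the proposition. The paper resolves this by simply asserting the existence of $C_B>0$ with $|W(S)_{B(e,r)}|\ge C_B\, r^{d_0}$ ``using the lower box counting dimension'' --- a pointwise power-law bound that, as you observe, does not follow from the bare $\liminf$ (consider a patch-counting function of order $r^{d_0}/\log r$). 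So your hesitation is well founded: the paper implicitly uses the stronger input that the patch-counting function dominates a constant times $r^{d_0}$, which in the case $\Gamma=\Z^d$ is supplied by adapting \cite[Lemma~2.2]{Lenz04}, as the remark following the proposition explains (there $d_0=d=\kappa$). As submitted, your proof establishes the proposition only with exponent $\kappa/(d_0-\epsilon)$ for each $\epsilon>0$; to obtain the stated claim you must either assume or derive such a power-law lower bound on $|W(S)_{B(e,r)}|$ (e.g., from linear repetitivity), rather than appeal to the definition of $\lodim$ alone.
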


\begin{proof}
Since $\omega_0$ is periodic, there is a finite set $M\subseteq\Gamma$ such that $\Orb(\omega_0)=\{\eta\omega_0 \,|\, \eta\in M\}$. By Proposition~\ref{prop:Periodic_Sn}, we have
$$
\Orb(S^n(\omega_0))=\big\{ \eta S^n(\omega_0) \,|\, \eta \in (D^n[V]\cap\Gamma)\cdot D^n[M] \big\}, \qquad n\in\N.
$$
Since $V$ is relatively compact, there is an $r_+>0$ such that $V\subseteq B(e,r_+)$. Thus, $D^n[V]\cap\Gamma\subseteq B(e,r_+ \lambda_0^n)$.
Since $\Gamma$ has exact polynomial growth, see Section~\ref{subsec:Substitutions_general}, there is a constant $C_\Gamma>0$ such that $|B(e,r) \cap \Gamma|\leq C_\Gamma r^\kappa$ for all $r\geq 1$.
With all this at hand, we estimate
$$
\vert \Orb(S^n(\omega_0))\vert 
	\leq \vert D^n[M]\vert \cdot \vert D^n[V]\cap\Gamma \vert
	\leq \vert M \vert \cdot \vert B(e,r_+\lambda_0^n) \cap \Gamma \vert
	\leq \vert M \vert \cdot C_\Gamma \cdot r_+^\kappa \cdot \lambda_0^{n\kappa}.
$$
Suppose that $\delta_H\big( \Orb(S^n(\omega_0)), \Omega(S)\big)  < \frac{1}{r+1}$ for $r> 1$. Then $W(S^n(\omega_0))_{B(e,r)}=W(S)_{B(e,r)}$ follows from equality~\eqref{eq:HausdorffMetric_Dictionaries}. Note that if $P,Q\in W(S^n(\omega_0))_{B(e,r)}$ are different, then there are two different $\omega_P,\omega_Q\in \Orb(S^n(\omega_0))$ satisfying $\omega_P|_{B(e,r)}=P$ and  $\omega_Q|_{B(e,r)}=Q$. Hence, $\vert \Orb(S^n(\omega_0))\vert \geq \vert W(S^n(\omega_0))_{B(e,r)} \vert$ follows. Combining this with the previous considerations yields
$$
\vert M \vert \cdot C_\Gamma \cdot r_+^\kappa \cdot \lambda_0^{n\kappa} 
	\geq \vert \Orb(S^n(\omega_0))\vert
	\geq \vert W(S^n(\omega_0))_{B(e,r)} \vert
	= \vert W(S)_{B(e,r)} \vert.
$$
The last term can be estimated using the lower box counting dimension. In particular, there is a constant $C_B>0$ such that $\vert W(S)_{B(e,r)} \vert\geq C_B r^{\lodim(\Omega(S))}$. Hence, we obtain
$$
\vert M \vert \cdot C_\Gamma \cdot r_+^\kappa \cdot \lambda_0^{n\kappa} \geq C_B r^{\lodim(\Omega(S))}.
$$
To conclude, we proved the implication
$$
\delta_H\big( \Orb(S^n(\omega_0)), \Omega(S)\big)  < \frac{1}{r+1} \; \textrm{for} \; r>1
	\qquad\Longrightarrow\qquad
r\leq \left( \frac{|M|\lambda_0^{n\kappa}}{C_1} \right)^{\frac{1}{\lodim(\Omega(S))}}=:R(n)
$$ 
where $C_1:=\frac{C_B}{C_\Gamma \cdot r_+^\kappa}$. Since $\lambda_0>1$, $R(n)$ diverges implying $R_0:=\min_{n\in\N} R(n)$ exists and is positive. Thus, there is a $C_2>1$ (independent of $n\in\N$) such that $(C_2-1)R_0>1$. Hence, $(C_2-1)R(n)>1$ follows for all $n\in\N$ implying
$$
r(n):=C_2R(n)-1>R(n).
$$ 
Then the previous considerations imply by contraposition that
$$
\delta_H\big( \Orb(S^n(\omega_0)), \Omega(S)\big)  
	\geq \frac{1}{r(n)+1}
	\geq \frac{1}{C_2}
		\left(\frac{C_1}{|M|}\right)^{\frac{1}{\lodim(\Omega(S))}}
		\left(\frac{1}{\lambda_0^n}\right)^{\frac{\kappa}{\lodim(\Omega(S))}} 
$$
proving the claim with 
\[
C(\omega_0) := \frac{1}{C_2}
		\left(\frac{C_1}{|M|}\right)^{\frac{1}{\lodim(\Omega(S))}}.
\]
\end{proof}

\begin{remark}
For $\Gamma = \mathbb{Z}^d$ one can adjust \cite[Lemma~2.2]{Lenz04} to the symbolic setting in order to conclude $\vert W(\Omega)_{B(e,r)}\vert \geq Cr^d$ if $r\geq 1$ is large enough. Thus the lower box counting dimension of $\Omega(S)$ satisfies  $\lodim(\Omega(S))=d = \kappa$ in this case.  
We conclude that if $S$ is a substitution map of a primitive non-periodic substitution on $\Aa^{\Z^d}$ (or a block substitution), then Proposition~\ref{prop:lower-bound-estim} asserts
\[ 
\delta_H \big( \Orb(S^n(\omega_0)), \Omega(S)\big) 
	\geq  C(\omega_0) \frac{1}{\lambda_0^n}
\]
for all periodic $\omega_0\in\Aa^{\Z^d}$, where $C(\omega_0)>0$ is a constant depending on $\omega_0$. This shows in particular that our upper bound proven in Proposition~\ref{prop:ConvergenceRate} is asymptotically optimal in this case. 
\end{remark}

\section{Algorithm to reduce testing domains} 
\label{sec:Testing_Tuple_reduction}

In order to apply Theorem \ref{thm:charact_convergence} to specific examples, it is important to compute the graphs $G_S(T,N_T)$. The smaller the testing domain is, the smaller the graphs become. Therefore minimizing the testing domain is computationally crucial. We provide a general algorithm for this purpose and apply it to the Heisenberg group. The Python source code, along with its documentation can be found in the GitHub repository \cite{TestHeisen24}.

\medskip

We remind the reader that a testing tuple always exists by Proposition~\ref{prop:TestingTuple_Existence}. Furthermore, a testing tuple depends on a dilation datum $\Dd$ with associated stretch factor $\lambda_0>1$. We continue using the notation $D:=D_{\lambda_0}$. 

\begin{lemma} \label{lem:test-lemma}
Consider a dilation datum $\Dd$ with associated stretch factor $\lambda_0>1$. Let $T_0\subseteq \Gamma$ be a testing domain of the substitution, and let $T\subseteq \Gamma$ be finite. Then $T$ is a testing domain of the substitution if and only if there exists an $N_0\in \mathbb{N}$ such that for all $x\in \Gamma$, there exists a $\gamma=\gamma(x)\in \Gamma$ satisfying
\begin{equation} \label{eq:test-lemma-contain}
x T_0\subseteq D^{N_0}(\gamma)  V(N_0,T).
\end{equation}
\end{lemma}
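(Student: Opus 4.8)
The plan is to prove both implications of the equivalence, using as the central tools the structural identities for the iterated supports collected in Lemma~\ref{lem:SupportGrowth} together with the equivariance $V(n,\gamma M)=D^n(\gamma)V(n,M)$ from Proposition~\ref{prop:SubstitutionMap}~\eqref{enu:support_Sn_Equivariant}. Throughout I write $D:=D_{\lambda_0}$ and recall that, by the remark following Proposition~\ref{prop:TestingTuple_Existence}, verifying Definition~\ref{def:TestingTuple}~\eqref{enu:TestingDomain} for a finite set already certifies that it is a testing domain.

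For the easy direction, assuming $T$ is a testing domain, I would first use that $T_0$ is finite to pick $r>0$ with $T_0\subseteq B(e,r)$, so that $xT_0\subseteq xB(e,r)$ for every $x\in\Gamma$ by left-invariance of the metric. Applying Definition~\ref{def:TestingTuple}~\eqref{enu:TestingDomain} to $T$ for this $r$ yields an index $N_r(T)$; choosing any integer $N_0\geq N_r(T)$, the testing-domain property gives for each $x$ a $\gamma\in\Gamma$ with $xB(e,r)\subseteq D^{N_0}(\gamma)V(N_0,T)$, whence $xT_0\subseteq D^{N_0}(\gamma)V(N_0,T)$, which is exactly \eqref{eq:test-lemma-contain}.

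The substantial direction is the converse, where I would bootstrap the testing-domain property of $T_0$ into one for $T$. The first step is to promote \eqref{eq:test-lemma-contain} to an inclusion between iterated supports. Fixing $x\in\Gamma$ and the corresponding $\gamma_x$ from \eqref{eq:test-lemma-contain}, rewriting the right-hand side via Proposition~\ref{prop:SubstitutionMap}~\eqref{enu:support_Sn_Equivariant} gives $xT_0\subseteq V(N_0,\gamma_x T)$, and since $xT_0\subseteq\Gamma$ this refines to $xT_0\subseteq V(N_0,\gamma_x T)\cap\Gamma$. Applying $V(m,\,\cdot\,)$ and using monotonicity (Lemma~\ref{lem:SupportGrowth}~\eqref{enu:M_subset_M'}), the semigroup identity (Lemma~\ref{lem:SupportGrowth}~\eqref{enu:V(n,V(m,M))}), and equivariance, I obtain
\begin{equation*}
D^m(x)V(m,T_0)=V(m,xT_0)\subseteq V\big(m,V(N_0,\gamma_x T)\cap\Gamma\big)=V(m+N_0,\gamma_x T)=D^{m+N_0}(\gamma_x)V(m+N_0,T)
\end{equation*}
for every $m\in\N_0$ and every $x\in\Gamma$. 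Next, given $r>0$ and an arbitrary $y\in\Gamma$, the testing-domain property of $T_0$ supplies, for all $m\geq N_r(T_0)$, some $\delta\in\Gamma$ with $yB(e,r)\subseteq D^m(\delta)V(m,T_0)$; feeding $x=\delta$ into the displayed chain produces $yB(e,r)\subseteq D^{m+N_0}(\gamma_\delta)V(m+N_0,T)$. Setting $n=m+N_0$ and $N_r(T):=N_r(T_0)+N_0$, this establishes that for every $n\geq N_r(T)$ and every $y\in\Gamma$ there is a $\gamma'\in\Gamma$ with $yB(e,r)\subseteq D^n(\gamma')V(n,T)$. As $r>0$ was arbitrary and $T$ is finite, $T$ satisfies Definition~\ref{def:TestingTuple}~\eqref{enu:TestingDomain} and is a testing domain.

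The main obstacle is organizational rather than conceptual: I must apply the three parts of Lemma~\ref{lem:SupportGrowth} and the equivariance relation in precisely the right order so that the fixed index $N_0$ from \eqref{eq:test-lemma-contain} adds cleanly onto the variable index $m$ furnished by $T_0$, and I must verify that the auxiliary sets $xT_0$ and $V(N_0,\gamma_x T)\cap\Gamma$ are finite and nonempty so that Lemma~\ref{lem:SupportGrowth} is applicable. A minor point to keep straight is which ball $B(e,r)$ denotes, but since only the inclusion $T_0\subseteq B(e,r)$ and left-invariance are invoked, this causes no genuine difficulty.
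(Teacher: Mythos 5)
Your proposal is correct and follows essentially the same route as the paper: the forward direction via enclosing $T_0$ in a ball and invoking Definition~\ref{def:TestingTuple}~\eqref{enu:TestingDomain} for $T$, and the converse by setting $N_r(T):=N_r(T_0)+N_0$ and chaining the testing-domain property of $T_0$ with \eqref{eq:test-lemma-contain} through Lemma~\ref{lem:SupportGrowth}~\eqref{enu:V(n,V(m,M))},\eqref{enu:M_subset_M'} and the equivariance of $V(n,\cdot)$. The only difference is cosmetic -- you state the inclusion chain for a general $x$ before specializing to the $\delta$ produced by $T_0$, whereas the paper specializes first.
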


\begin{proof}
If $T$ is a testing domain then \eqref{eq:test-lemma-contain} follows immediately from  Definition~\ref{def:TestingTuple}~\eqref{enu:TestingDomain}, using that $T_0$ is finite. Thus, it is contained in a ball $B(e,r_0)$ for some $r_0>0$. Now suppose the inclusion~\eqref{eq:test-lemma-contain} holds with $N_0\in\N$ and we prove that $T$ is a testing domain. Let $r>0$. Since $T_0$ is a testing domain, there is an $N_r(T_0)\in\N$ satisfying Definition~\ref{def:TestingTuple}~\eqref{enu:TestingDomain}. Set $N_r(T):=N_r(T_0)+N_0$. Let $x\in\Gamma$ and $n\geq N_r(T)$. Since $n-N_0\geq N_r(T_0)$, and $T_0$ is a testing domain, there is an $\eta=\eta(x,n-N_0)\in\Gamma$ such that 
$$
xB(e,r)\subseteq D^{n-N_0}(\eta) V(n-N_0,T_0) = V(n-N_0,\eta T_0),
$$
where we used Proposition~\ref{prop:SubstitutionMap}~\eqref{enu:support_Sn_Equivariant} in the last equality. By equation~\eqref{eq:test-lemma-contain}, there is an $\gamma=\gamma(\eta)\in\Gamma$ such that $\eta T_0 \subseteq D^{N_0}(\gamma)V(N_0,T)=V(N_0,\gamma T)$. Combined with the previous considerations, Proposition~\ref{prop:SubstitutionMap}~\eqref{enu:support_Sn_Equivariant} and Lemma~\ref{lem:SupportGrowth}~\eqref{enu:V(n,V(m,M))},\eqref{enu:M_subset_M'} imply that
$$
xB(e,r)\subseteq V(n-N_0,\eta T_0) \subseteq V\big( n-N_0, V(N_0, \gamma T)\cap \Gamma \big)= V(n-N_0+N_0, \gamma T)= D^n(\gamma) V(n,T).
$$
Thus, $T$ satisfies Definition~\ref{def:TestingTuple}~\eqref{enu:TestingDomain}, namely $T$ is a testing domain.
\end{proof}

We aim at showing that it is sufficient to consider only finitely many $x$ to invoke Lemma~\ref{lem:test-lemma}, see Lemma~\ref{lem:test-lemma-quotient} and Proposition~\ref{prop:test-dom-criterion} below. To this end, we use the following observation.

\begin{lemma}
\label{lem:V(n)_fundamental_domain}
Consider a dilation datum $\Dd$ with associated stretch factor $\lambda_0>1$. Then for $n\in\N$, the sets $D^n[V]$ and $V(n)$ (defined in equation~\eqref{eq:recursive_V(n)}) are fundamental domains of $D^n[\Gamma]$.
\end{lemma}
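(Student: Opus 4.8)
The plan is to treat the two claims separately, handling $D^n[V]$ by a direct transport-of-structure argument and $V(n)$ by induction on $n$. For $D^n[V]$, I would start from the fact that $V$ is a left-fundamental domain for $\Gamma$, i.e. $G=\bigsqcup_{\gamma\in\Gamma}\gamma V$, and apply the automorphism $D^n$ to this decomposition. Since $D^n$ is a bijection of $G$, it carries a disjoint union to a disjoint union, giving $G=D^n[G]=\bigsqcup_{\gamma\in\Gamma}D^n(\gamma)\,D^n[V]$. Reindexing along the bijection $\gamma\mapsto D^n(\gamma)$ from $\Gamma$ onto $D^n[\Gamma]$ then yields $G=\bigsqcup_{\nu\in D^n[\Gamma]}\nu\,D^n[V]$, which is exactly the assertion that $D^n[V]$ is a fundamental domain for $D^n[\Gamma]$.

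For $V(n)$ I would induct on $n$, with base case $n=0$ where $V(0)=V$ is a fundamental domain of $D^0[\Gamma]=\Gamma$. For the inductive step, assume $V(n)$ is a fundamental domain of $D^n[\Gamma]$. The key intermediate fact to extract is that $R:=V(n)\cap\Gamma$ is a set of representatives for the right cosets $D^n[\Gamma]\backslash\Gamma$: given $\gamma\in\Gamma$, write it uniquely as $\gamma=\mu v$ with $\mu\in D^n[\Gamma]$ and $v\in V(n)$ (using the induction hypothesis); since $\mu,\gamma\in\Gamma$ one automatically has $v=\mu^{-1}\gamma\in R$, and uniqueness is inherited from the fundamental-domain decomposition. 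This produces the disjoint decomposition $\Gamma=\bigsqcup_{\mu\in D^n[\Gamma]}\mu R$.

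Next I would refine $G=\bigsqcup_{\gamma\in\Gamma}\gamma V$ through this decomposition: writing each $\gamma=\mu r$ with $\mu\in D^n[\Gamma]$ and $r\in R$, one gets $G=\bigsqcup_{\mu}\mu\big(\bigsqcup_{r\in R}rV\big)=\bigsqcup_{\mu}\mu(R\cdot V)$, where $\bigsqcup_{r\in R}rV=R\cdot V$ because distinct lattice elements translate $V$ to disjoint sets. Hence $R\cdot V=(V(n)\cap\Gamma)\cdot V$ is a fundamental domain for $D^n[\Gamma]$. Applying $D$ once more and reindexing $\mu\mapsto D(\mu)$ from $D^n[\Gamma]$ onto $D^{n+1}[\Gamma]$ turns this into $G=\bigsqcup_{\nu\in D^{n+1}[\Gamma]}\nu\,D[R\cdot V]$; since $V(n+1)=D\big[(V(n)\cap\Gamma)\cdot V\big]=D[R\cdot V]$ by the recursion in equation~\eqref{eq:recursive_V(n)}, this is precisely the claim for $n+1$, closing the induction.

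The computations are elementary, and the only point requiring genuine care — the main obstacle — is the bookkeeping of disjointness in the possibly non-abelian group $G$ (for instance the Heisenberg group): one must consistently work with left-translates $\gamma V$ and the left action, verify that the double indexing $(\mu,r)\mapsto\mu r$ is a bijection $D^n[\Gamma]\times R\to\Gamma$ so that the grouped union stays disjoint, and invoke that an automorphism, being a bijection, preserves disjoint unions. Measurability, relative compactness, and finiteness of $V(n)\cap\Gamma$ are not needed for the fundamental-domain property as such, so I would not dwell on them at this stage.
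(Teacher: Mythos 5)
Your proof is correct and follows essentially the same route as the paper: the $D^n[V]$ part by transporting the decomposition $G=\bigsqcup_{\gamma\in\Gamma}\gamma V$ through the automorphism $D^n$, and the $V(n)$ part by induction on $n$ via the recursion $V(n+1)=D\big[(V(n)\cap\Gamma)\cdot V\big]$, with the same covering computation $D^{n+1}[\Gamma]\cdot V(n+1)=D\big[\Gamma\cdot V\big]=G$. The only cosmetic difference is that you establish disjointness self-containedly by exhibiting the bijection $(\mu,r)\mapsto\mu r$ from $D^n[\Gamma]\times(V(n)\cap\Gamma)$ onto $\Gamma$, whereas the paper delegates that step to Proposition~\ref{prop:SubstitutionMap}~\eqref{enu:support_Sn_Equivariant} and Lemma~\ref{lem:SupportGrowth}~\eqref{enu:M_cap_M'}.
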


\begin{proof}
We first prove that $D^n[V]$ is a fundamental domain for $D^n[\Gamma]$. Since $D$ is an automorphism, observe
$$
D^n[\Gamma]\cdot D^n[V] = D^n[\Gamma\cdot  V] = D^n[G] = G.
$$
In addition, for $\gamma,\eta\in\Gamma$, 
$$
D^n(\gamma)D^n[V]\cap D^n(\eta)D^n[V]\neq\emptyset
	\quad\Longleftrightarrow\quad D^n[\gamma V]\cap D^n[\eta V]\neq \emptyset
		\quad\Longleftrightarrow\quad \gamma V\cap \eta V\neq \emptyset
$$ 
holds proving the claim. Next, we show that $V(n)$ is a fundamental domain of $D^n[\Gamma]$ by induction. For the induction base, observe that 
$$
D^1[V] = D[V] = D\big[(V\cap\Gamma)\cdot V\big]=V(1).
$$
Therefore, $V(1)$ is a fundamental domain of $D(\Gamma)$ by the previous considerations. For the induction step, suppose that $V(n)$ is a fundamental domain of $D^n[\Gamma]$. Thus, we have
\[
 D^n[\Gamma]\cdot(V(n)\cap\Gamma)= \bigcup_{\gamma \in \Gamma} D^n(\gamma)\big( V(n) \cap \Gamma \big) = \Big( \bigcup_{\gamma \in \Gamma} D^n(\gamma)V(n) \Big) \cap \Gamma  = D^{n}[\Gamma]\cdot V(n) \cap \Gamma = \Gamma.
\]
Since $D$ is an automorphism, we conclude
\begin{align*}
D^{n+1}[\Gamma]\cdot V(n+1) 
	&= D^{n+1}[\Gamma]\cdot  D\big[ (V(n)\cap\Gamma)\cdot V\big]
	= D\big[{D^n[\Gamma]\cdot \big( (V(n)\cap\Gamma)}\cdot V \big)\big] \\
	& = D\big[\underbrace{\big(D^n[\Gamma]\cdot (V(n)\cap\Gamma) \big)}_{= \Gamma}\cdot V\big] 
		= D\big[ G\big]	= G.
\end{align*}
In addition, Proposition~\ref{prop:SubstitutionMap}~\eqref{enu:support_Sn_Equivariant} and Lemma~\ref{lem:SupportGrowth}~\eqref{enu:M_cap_M'} imply for distinct $\gamma,\eta\in\Gamma$  that 
$$
D^{n+1}(\gamma)V(n+1)\cap D^{n+1}(\eta)V(n+1)= V(n+1,\{\gamma\})\cap V(n+1,\{\eta\}) = \emptyset.
$$
This finishes the induction.
\end{proof}

\begin{lemma} 
\label{lem:test-lemma-quotient}
Consider a dilation datum $\Dd$ with associated stretch factor $\lambda_0>1$. Let $n\in \N$ and let $V_n$ be a relatively compact fundamental domain for $D^n[\Gamma]$. Then, the following statements are equivalent for finite sets $T_1,T_2\subseteq \Gamma$.
\begin{enumerate} [(i)]
	\item For every $x\in \Gamma$, there exists a $\gamma(x)\in \Gamma$ satisfying $xT_1\subseteq D^n(\gamma(x))V(n,T_2)$.
	\item For every $x\in V_n\cap \Gamma$, there exists a $\gamma(x)\in \Gamma$ satisfying $xT_1\subseteq D^n(\gamma(x))V(n,T_2)$.
\end{enumerate}
\end{lemma}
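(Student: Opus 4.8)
The plan is to prove the two implications separately, with essentially all the content lying in the direction \eqref{eq:test-lemma-contain}-style (ii)$\Rightarrow$(i); the converse is immediate, since condition (ii) is nothing but the restriction of the universally quantified statement in (i) from all of $\Gamma$ to the subset $V_n\cap\Gamma\subseteq\Gamma$.

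For (ii)$\Rightarrow$(i), the key idea is to exploit that $V_n$ is a left-fundamental domain for $D^n[\Gamma]$ in order to reduce an arbitrary $x\in\Gamma$ to a representative lying in $V_n\cap\Gamma$. Concretely, I would first fix $x\in\Gamma$. Since $G=\bigsqcup_{g\in D^n[\Gamma]} g\,V_n$, there are a unique $\delta\in\Gamma$ and a $v\in V_n$ with $x=D^n(\delta)\,v$. The crucial observation is that this representative $v$ in fact lies in $\Gamma$: because $D_{\lambda_0}[\Gamma]\subseteq\Gamma$ we have $D^n(\delta)\in\Gamma$, and hence $v=D^n(\delta^{-1})\,x$ is a product of two elements of the group $\Gamma$, so $v\in V_n\cap\Gamma$.

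Next I would apply the hypothesis (ii) to this $v$, obtaining a $\gamma(v)\in\Gamma$ with $v T_1\subseteq D^n(\gamma(v))\,V(n,T_2)$. Left-translating by $D^n(\delta)$ and using that $D^n$ is a group homomorphism gives
$$
x T_1 = D^n(\delta)\, v T_1 \subseteq D^n(\delta)\,D^n(\gamma(v))\,V(n,T_2) = D^n\big(\delta\,\gamma(v)\big)\,V(n,T_2).
$$
Thus $\gamma(x):=\delta\,\gamma(v)\in\Gamma$ witnesses the inclusion required in (i), which completes the argument.

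I expect the only genuinely subtle point, and the step I would be most careful about, to be verifying that the fundamental-domain representative $v$ actually lies in $\Gamma$ rather than merely in $G$; this is precisely where the standing assumption $D_{\lambda_0}[\Gamma]\subseteq\Gamma$ from (D3) enters. Everything else is routine: the homomorphism property of $D^n$ and the left-invariance built into the fundamental-domain decomposition. Note that relative compactness of $V_n$ is not needed for the equivalence itself; it only guarantees that $V_n\cap\Gamma$ is finite, which is what makes condition (ii) effectively checkable in the reduction algorithm.
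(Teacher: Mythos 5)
Your proposal is correct and follows essentially the same route as the paper: decompose $x=D^n(\eta)y$ with $y\in V_n\cap\Gamma$ using the fundamental-domain tiling, apply (ii) to $y$, and left-translate by $D^n(\eta)$. Your explicit verification that the representative $y$ lies in $\Gamma$ (via $D_{\lambda_0}[\Gamma]\subseteq\Gamma$) is a detail the paper leaves implicit, and your observation that relative compactness of $V_n$ is not needed for the equivalence itself is accurate.
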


\begin{proof}
Clearly $(i)$ implies $(ii)$. Suppose $(ii)$ holds and let $x\in \Gamma$. Since $G=\bigsqcup_{\eta\in \Gamma} D^n(\eta)V_n$, there exist $\eta\in \Gamma$ and $y\in V_n\cap \Gamma$, such that $x=D^n(\eta)y$. Applying $(ii)$ for $y$, there exists a $\gamma(y)\in \Gamma$ satisfying $yT_1\subseteq D^n(\gamma(y))V(n,T_2)$. Hence,
\[ 
xT_1= D^n(\eta)yT_1\subseteq D^n\big(\eta \gamma(y)\big)V(n,T_2). 
\]
Setting $\gamma(x):= \eta \gamma(y)$ shows that $(i)$ holds.
\end{proof}

According to \cite[Theorem~5.13]{BHP21-Symbolic}, for every dilation datum $\Dd$ with associated stretch factor $\lambda_0>1$, there is a constant $C_+>0$ satisfying $V(n)\subseteq B( e, \lambda_0^{n} C_+ )$ for all $n\in\N$. This will be used in the following proposition to obtain a computable sufficient condition for testing domains.

\begin{proposition} \label{prop:test-dom-criterion}
Consider a dilation datum $\Dd$ with associated stretch factor $\lambda_0>1$ and $C_+>0$ as above.
Let $T_0\subseteq \Gamma$ be a testing domain of the substitution with $e\in T_0$. Then the following assertions are equivalent for a finite set $T\subseteq \Gamma$ and $R_T>0$ satisfying $T\subseteq B(e,R_T)$.
\begin{enumerate}[(i)]
\item The set $T$ is a testing domain of the substitution.
\item \label{enu:test-domain-algo-cond} There exists an $N_0\in \N$ such that for all $x\in  D^{N_0}[V]\cap \Gamma$, there is a $\gamma_x\in B\big(D^{-N_0}(x), R_T+C_+ \big)  \cap \Gamma$ such that $x T_0\subseteq D^{N_0}(\gamma_x)  V(N_0,T)$.
\end{enumerate}
\end{proposition}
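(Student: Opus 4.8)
The plan is to strip the ball constraint from condition~\eqref{enu:test-domain-algo-cond} and recognize the resulting weaker condition as a reformulation of Lemma~\ref{lem:test-lemma}, so that the whole content of the proposition reduces to showing that the displacement $d\big(\gamma_x, D^{-N_0}(x)\big)$ is automatically controlled.

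First I would reduce to a ``quotient'' characterization. By Lemma~\ref{lem:V(n)_fundamental_domain}, the set $D^{N_0}[V]$ is a relatively compact fundamental domain for $D^{N_0}[\Gamma]$. Applying Lemma~\ref{lem:test-lemma-quotient} with $n=N_0$, $T_1=T_0$, $T_2=T$ and $V_n = D^{N_0}[V]$ then shows that the condition ``for every $x\in\Gamma$ there is $\gamma\in\Gamma$ with $xT_0\subseteq D^{N_0}(\gamma)V(N_0,T)$'' appearing in Lemma~\ref{lem:test-lemma} is equivalent to the same condition restricted to $x\in D^{N_0}[V]\cap\Gamma$. Combining this with Lemma~\ref{lem:test-lemma}, the property that $T$ is a testing domain is equivalent to the assertion $(\ast)$: there is an $N_0\in\N$ such that for every $x\in D^{N_0}[V]\cap\Gamma$ there exists $\gamma_x\in\Gamma$ with $xT_0\subseteq D^{N_0}(\gamma_x)V(N_0,T)$. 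Since \eqref{enu:test-domain-algo-cond} is exactly $(\ast)$ with the extra demand $\gamma_x\in B\big(D^{-N_0}(x), R_T+C_+\big)$, the implication \eqref{enu:test-domain-algo-cond}$\Rightarrow$(i) is immediate: dropping the ball constraint yields $(\ast)$, hence $T$ is a testing domain.

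The content is therefore the converse (i)$\Rightarrow$\eqref{enu:test-domain-algo-cond}: given $(\ast)$ I must verify that the witnesses $\gamma_x$ can be taken in the prescribed ball for the \emph{same} $N_0$. Here I would exploit $e\in T_0$: from $xT_0\subseteq D^{N_0}(\gamma_x)V(N_0,T)$ and $e\in T_0$ we get $x\in D^{N_0}(\gamma_x)V(N_0,T)$. Decomposing the support via Proposition~\ref{prop:SubstitutionMap}~\eqref{enu:support_Sn_Equivariant} (so that $V(N_0,\{t\})=D^{N_0}(t)V(N_0)$) and Lemma~\ref{lem:SupportGrowth}~\eqref{enu:M_cap_M'} as $V(N_0,T)=\bigsqcup_{t\in T} D^{N_0}(t)V(N_0)$, I may write $x = D^{N_0}(\gamma_x\, t)\, w$ for some $t\in T$ and $w\in V(N_0)$. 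Then $\gamma_x^{-1}D^{-N_0}(x) = t\, D^{-N_0}(w)$, so left-invariance of $d$ together with the dilation scaling $d\big(e,D^{-N_0}(w)\big) = \lambda_0^{-N_0} d(e,w)$ gives
\[
d\big(\gamma_x, D^{-N_0}(x)\big) = d\big(e,\, t\,D^{-N_0}(w)\big) \leq d(e,t) + \lambda_0^{-N_0}\, d(e,w).
\]
Finally I would invoke $T\subseteq B(e,R_T)$ to bound $d(e,t)<R_T$ and the estimate $V(N_0)\subseteq B(e,\lambda_0^{N_0}C_+)$ cited before the proposition to bound $\lambda_0^{-N_0}d(e,w)<C_+$, yielding $d\big(\gamma_x,D^{-N_0}(x)\big)<R_T+C_+$ and hence $\gamma_x\in B\big(D^{-N_0}(x),R_T+C_+\big)\cap\Gamma$ as required.

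The main obstacle I anticipate is purely the bookkeeping in this displacement estimate: one must choose the fundamental domain in Lemma~\ref{lem:test-lemma-quotient} to be $D^{N_0}[V]$ precisely so that the index set matches $D^{N_0}[V]\cap\Gamma$, and must use $e\in T_0$ to pin $x$ itself (rather than some far-away point of $xT_0$) inside the translated support. The scaling factor $\lambda_0^{-N_0}$ is exactly what renders the $C_+$ contribution independent of $N_0$, which is what makes the ball radius $R_T+C_+$ uniform and hence the criterion genuinely checkable.
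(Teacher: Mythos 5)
Your proposal is correct and follows essentially the same route as the paper: the reduction of (i) to the unconstrained condition via Lemma~\ref{lem:V(n)_fundamental_domain}, Lemma~\ref{lem:test-lemma} and Lemma~\ref{lem:test-lemma-quotient}, and then the observation that $e\in T_0$ pins $x$ inside $D^{N_0}(\gamma_x)V(N_0,T)\subseteq D^{N_0}(\gamma_x)\bigsqcup_{t\in T}D^{N_0}(t)V(N_0)$, which combined with $T\subseteq B(e,R_T)$, $V(N_0)\subseteq B(e,\lambda_0^{N_0}C_+)$, left-invariance and the dilation scaling forces $\gamma_x\in B\big(D^{-N_0}(x),R_T+C_+\big)$. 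The only cosmetic difference is that you carry out the displacement bound elementwise via $x=D^{N_0}(\gamma_x t)w$, whereas the paper phrases the same estimate as a chain of set inclusions ending in $\gamma_x\in D^{-N_0}(x)B(e,R_T+C_+)^{-1}=B\big(D^{-N_0}(x),R_T+C_+\big)$.
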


\begin{proof}
For each $n \in \N$, $D^n[V]$ is a fundamental domain for $D^n[\Gamma]$ by Lemma~\ref{lem:V(n)_fundamental_domain}. By Lemma~\ref{lem:test-lemma} and Lemma~\ref{lem:test-lemma-quotient}, the statement $(i)$ is equivalent to the fact that for all $x\in  D^{N_0}[V]\cap \Gamma$, there exists $\gamma_x\in \Gamma$ such that $x T_0\subseteq D^{N_0}(\gamma_x)  V(N_0,T)$.
Thus, it suffices to prove that $\gamma_x$ can be chosen in the set $B\big(D^{-N_0}(x), R_T+C_+ \big)$.

Let us first note that $x\in D^{N_0}(\gamma_x)  V(N_0,T)$,  since $e\in T_0$.  
Proposition~\ref{prop:substitutionmap}~\eqref{enu:support_Sn_Equivariant} and Lemma~\ref{lem:SupportGrowth}~\eqref{enu:M_cap_M'} imply $V(N_0,T)\subseteq \bigsqcup_{\eta \in T} D^{N_0}(\eta) V(N_0)$.
Recall that $T\subseteq B(e,R_T)$, $D^{N_0}\big[B(e,r)\big]=B(e,\lambda_0^{N_0}r)$ for $r>0$ and $V(N_0)\subseteq B\big( e, \lambda_0^{N_0}C_+ \big)$. Relying on these facts, we conclude that 
\begin{equation*}
	\begin{aligned}
	  x \in D^{N_0}(\gamma_x)  V(N_0,T) 
	  	\subseteq D^{N_0}(\gamma_x)  \Bigg( \bigsqcup_{\eta \in T} D^{N_0}(\eta) V(N_0) \Bigg)  
		& \subseteq  D^{N_0}(\gamma_x) \Bigg( B\big(e,\lambda_0^{N_0}R_T\big)\cdot B\big(e,\lambda_0^{N_0}C_+\big) \Bigg) \\
		& \subseteq D^{N_0}(\gamma_x) D^{N_0}\bigg[ B\big(e, R_T+C_+\big) \bigg],
	\end{aligned}
\end{equation*}
using that the metric $d$ is left-invariant for the last inclusion. 
Hence, $\gamma_x \in  D^{-N_0}(x)\big(B(e, R_T+C_+ )  \big)^{-1}$ follows.
Since $d$ is a left-invariant metric, we have $B(e,r)^{-1}=B(e,r)$ implying
\[ 
\gamma_x\in D^{-N_0}(x)\cdot B(e, R_T+C_+ ) = B\big(D^{-N_0}(x), R_T+C_+ \big).
\]
\end{proof}

We now use Proposition \ref{prop:test-dom-criterion} to propose an algorithm (Algorithm~\ref{Algo-test-reduc}) that checks whether a suspected finite set is a testing domain.  
Let $\Dd$ be a dilation datum with associated stretch factor $\lambda_0>1$ being sufficiently large relative to $V$ with respect to a constant $C_->0$ and $s_1 \in \N_0$.
Choose $s_2\in\N$ such that $\overline{V}\subseteq B(e, C_- \lambda_0^{s_2})$. Then, there exists some $\delta>0$ such that $V\cdot B(e,\delta)\subseteq B(e,\lambda_0^{s_2} C_-)$.
Thus, $T_0:=V(s_1+ s_2)\cap \Gamma$ is a testing domain by  Proposition~\ref{prop:TestingTuple_Existence}.

Let $T\subseteq \Gamma$ be finite and $R_T>0$ such that $T\subseteq B(e,R_T)$. Then we can use condition \eqref{enu:test-domain-algo-cond} of Proposition~\ref{prop:test-dom-criterion} for a fixed $N_0\in\N$ to check if $T$ is also a testing domain. For given $N_0\in\N$, the following algorithm returns the value ``true'' if $T$ is a testing domain. 
 
Note that the Algorithm~\ref{Algo-test-reduc} provides only a sufficient criteria to check if a given $T$ is a testing domain. 
In the case of the Heisenberg group the algorithm can be further refined to make it more efficient, see \cite{TestHeisen24}. Note that for computational purposes it might be more efficient to run the algorithm recursively. More precisely, one might reduce the size of the initial testing domain $T_0$ in an iterative way. We do so in the case of the Heisenberg group 
and obtain the following result. Recall the notations introduced in Section~\ref{subsec:Heisenberg_Example}.

\begin{proposition} \label{prop:Heisen-test-dom}
The tuple $(T',1)$ with $T'=\{ -2,0 \}^2\times\{ -6,-4,...,4,6 \}$ is a testing tuple for dilation datum $\Dd_{H}=\big( \HeiR, d, (D_\lambda)_{\lambda>0}, \HeiZ, [-1,1)^3 \big)$ with associated stretch factor $\lambda_0=4$.
\end{proposition}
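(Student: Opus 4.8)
The plan is to certify that $T'$ is a testing domain by means of the computable criterion of Proposition~\ref{prop:test-dom-criterion}, realized through Algorithm~\ref{Algo-test-reduc} and the code in \cite{TestHeisen24}, and then to verify separately that $N_{T'}=1$. Note that $T'\subseteq V(1)\cap\Gamma$, so this is a genuine shrinking of a known testing domain; the easy monotonicity direction (enlarging) does not help, and the full criterion is needed.

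First I would fix a starting testing domain. By Proposition~\ref{prop:Periodic_Heisenberg}~(b), the set $T_0:=V(1)\cap\Gamma=\{-4,-2,0,2\}^2\times\{-16,-14,\dots,14\}$ is a testing domain for $\Dd_{H}$ with $\lambda_0=4$, and it contains $e=(0,0,0)$, as required by Proposition~\ref{prop:test-dom-criterion}. Next I would record the geometric constants entering the criterion: a radius $R_{T'}>0$ with $T'\subseteq B(e,R_{T'})$, obtained by evaluating the Cygan--Kor\'anyi norm \eqref{eq:Heisenberg_CyganKoranyiNorm} on the finitely many points of $T'$, and a constant $C_+>0$ with $V(n)\subseteq B(e,\lambda_0^n C_+)$ for all $n$, whose existence is guaranteed by \cite[Theorem~5.13]{BHP21-Symbolic} and which can be made explicit for the Heisenberg datum.

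With these at hand I would run the verification in Proposition~\ref{prop:test-dom-criterion}~\eqref{enu:test-domain-algo-cond} for a fixed $N_0\in\N$: the set $D^{N_0}[V]\cap\Gamma$ is finite, and for each of its elements $x$ one searches the finite set $B\big(D^{-N_0}(x),R_{T'}+C_+\big)\cap\Gamma$ for a $\gamma_x$ satisfying $xT_0\subseteq D^{N_0}(\gamma_x)V(N_0,T')$. The point that renders this a finite check is that, since $xT_0\subseteq\Gamma$ and $D^{N_0}(\gamma_x)\in\Gamma$, the inclusion is equivalent to $D^{N_0}(\gamma_x)^{-1}\eta\in V(N_0,T')\cap\Gamma$ for every $\eta\in xT_0$, and the finite set $V(N_0,T')\cap\Gamma$ is exactly the support of $S^{N_0}$ on a $T'$-patch (Proposition~\ref{prop:SubstitutionMap}~\eqref{enu:support_Sn_M}), computable by iterating the recursion \eqref{eq:recursive_V(n)}. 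As the text indicates, in practice one refines $T_0$ iteratively, shrinking the domain step by step until reaching $T'$, each step being an application of the same criterion.

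It remains to check that $N_{T'}=1$. Since $N_T\in\N$ forces $N_{T'}\geq1$, it suffices to verify the defining inclusion of Definition~\ref{def:TestingTuple}~\eqref{enu:N_T} at $m=1$, namely that for every $x\in\Gamma$ there is a $\gamma_x\in\Gamma$ with $xT'\subseteq D(\gamma_x)V(1,T')$. By Lemma~\ref{lem:V(n)_fundamental_domain} the set $D[V]=V(1)$ is a fundamental domain for $D[\Gamma]$, so Lemma~\ref{lem:test-lemma-quotient} (with $n=1$ and $T_1=T_2=T'$) reduces this to the finitely many $x\in D[V]\cap\Gamma$; the set $V(1,T')\cap\Gamma$ is again computed from \eqref{eq:recursive_V(n)} and the containment is tested as above. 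I expect the main obstacle to be not any single conceptual step but the non-abelian geometry of the supports $V(n,T')$: the Heisenberg multiplication shears the third coordinate, so these sets are far from boxes, and the verification amounts to a sizeable but finite enumeration whose correctness rests on the implementation in \cite{TestHeisen24}. Keeping $N_0$, and hence the cardinality of $D^{N_0}[V]\cap\Gamma$ and the radius of the search balls, as small as possible while still passing the check is the delicate practical matter.
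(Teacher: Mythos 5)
Your proposal follows essentially the same route as the paper: both certify that $T'$ is a testing domain by running the computable criterion of Proposition~\ref{prop:test-dom-criterion} (via Algorithm~\ref{Algo-test-reduc} and the implementation in \cite{TestHeisen24}) starting from the known testing domain $V(1)\cap\Gamma$ of Proposition~\ref{prop:Periodic_Heisenberg}, and then verify $N_{T'}=1$ separately. The only cosmetic differences are that the paper carries out the reduction in two explicit steps through the intermediate domain $T_1=\{-2,0\}^2\times\{-12,-10,\dots,12\}$, and justifies $N_{T'}=1$ by the containment $T'\subseteq V(1)\subseteq V(1,T')$ rather than by your (equally valid, and arguably more carefully quantified over all $x\in\Gamma$) appeal to Lemma~\ref{lem:test-lemma-quotient}.
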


\begin{proof}
Recall that $T=V(1)\cap \Gamma$ is a testing domain by Proposition \ref{prop:Periodic_Heisenberg}.
Define $T_1:= \{ -2, 0 \}^2 \times \{ -12, 10 , ... , 10 ,12 \}$.

Then the applications \emph{Alg}($\Dd_H$; 4, 1, $T$, $T_1$) and \emph{Alg}($\Dd_{H}$; 4, 1 $T_1$, $T'$) of Algorithm \ref{Algo-test-reduc} return the value ``true''. 
Thus, $T'$ is a testing domain. Since $T' \subseteq V(1)\subseteq V(1,T')$ holds, we conclude that $N_{T'}=1$ satisfies Definition~\ref{def:TestingTuple}~\eqref{enu:N_T} for the set $T'$. Hence, $(T',1)$ is a testing tuple.
\end{proof}

Proposition~\ref{prop:Heisen-test-dom} enhances possible computations which one desires to perform in the case of Heisenberg group substitution. It reduces the size of the testing domain from $|V(1)\cap\Gamma|=4^4=256$ to $|T'|=2\cdot2\cdot 7= 28$.

\begin{algorithm}[H]
		\caption{Algorithm to verify testing domain, denoted \emph{Alg}($\Dd$; $\lambda_0$, $N_0$, $T_0$, $T$).} \label{Algo-test-reduc}
		\begin{algorithmic}[1]
			\State \emph{Input:} 
			\item \hspace{0.5cm} $\bullet$ a dilation datum $\Dd$ 
			\item \hspace{0.5cm} $\bullet$ a stretch factor $\lambda_0$ associated with $\Dd$
			\item \hspace{0.5cm} $\bullet$ a testing domain $T_0$ 
			\item \hspace{0.5cm} $\bullet$ a finite set $T\subseteq\Gamma$
			\item \hspace{0.5cm} $\bullet$ an iteration number $N_0\in\N$
			\State \emph{Output:}  \textit{True} if condition~\eqref{enu:test-domain-algo-cond} in Proposition~\ref{prop:test-dom-criterion} holds with the given iteration number $N_0$. \textit{False} otherwise.
			\State Compute the set $ V(N_0,T)\cap \Gamma$ via the recursion given in equation~\eqref{eq:recursive_V(n)}.
			\State Compute the set $ D^{N_0}[V]\cap \Gamma$.
			\State Define the radius $R_T:=\max_{\gamma\in T} d(e,\gamma)+1$.
			\State Compute the set $ B(e, R_T+C_+ ) \cap \Gamma$.
			\ForAll {$x \in D^{N_0}[V]\cap \Gamma$:} 
			\State bool=False
			\State Compute $A:=x T_0$.
			\State Compute $ D^{-N_0}(x)\big( B(e, R_T+C_+ ) \big)  \cap \Gamma$.
			\ForAll{$\gamma_x \in D^{-N_0}(x)\big( B(e, R_T+C_+ )\big) \cap \Gamma$}
			\State Compute $B:=D^{N_0}(\gamma_x)V(N,T)$.
			\If{$A\subseteq B$}
			\State bool=True
			\State \emph{break} inner loop		
			\EndIf
			\EndFor
			\If{bool=False}
				\State \Return \textit{False}
			\EndIf
			\EndFor
			\State \Return \textit{True}
		\end{algorithmic}
	\end{algorithm}

	\bibliography{Biblio-Sub}
	\bibliographystyle{alpha}
	
\end{document}